\DeclareSymbolFont{cyrletters}{OT2}{wncyr}{m}{n}
\DeclareMathSymbol{\Sha}{\mathalpha}{cyrletters}{"58}
\numberwithin{equation}{subsection}
\renewcommand{\theenumi}{\roman{enumi}}
\theoremstyle{plain}
\newtheorem{theorem}[equation]{Theorem}
\newtheorem{proposition}[equation]{Proposition}
\newtheorem{lemma}[equation]{Lemma}
\newtheorem{corollary}[equation]{Corollary}
\newtheorem{algorithm}[equation]{Algorithm}
\newtheorem*{MT1}{Main Result 1}
\newtheorem*{MT2}{Main Result 2}
\newtheorem*{MT3}{Main Result 3}
\theoremstyle{definition}
\newtheorem{definition}[equation]{Definition}
\theoremstyle{remark}
\newtheorem{remark}[equation]{Remark}
\newtheorem{example}[equation]{Example}
\def\defi{\textsf}
\def\ext{\!\mid\!}
\def\epsilon{\varepsilon}
\def\rho{\varrho}
\def\theta{\vartheta}
\def\phi{\varphi}
\def\tilde{\widetilde}
\def\Magma{\textsc{Magma}}
\def\loccit{\textit{loc.\ cit.}}
\DeclareMathOperator{\Aut}{Aut}
\DeclareMathOperator{\Cl}{Cl}
\DeclareMathOperator{\End}{End}
\DeclareMathOperator{\Gal}{Gal}
\DeclareMathOperator{\GL}{GL}
\DeclareMathOperator{\Hom}{Hom}
\DeclareMathOperator{\hyp}{hyp}
\DeclareMathOperator{\im}{im}
\DeclareMathOperator{\id}{id}
\DeclareMathOperator{\Jac}{Jac}
\DeclareMathOperator{\sgn}{sgn}
\DeclareMathOperator{\Tr}{Tr}
\def\a{\mathfrak{a}}
\def\b{\mathfrak{b}}
\def\aa{\mathfrak{a}}
\def\bb{\mathfrak{b}}
\def\cc{\mathfrak{c}}
\def\pp{\mathfrak{p}}
\def\p{\mathfrak{p}}
\def\C{\mathbb{C}}
\def\F{\mathbb{F}}
\def\P{\mathbb{P}}
\def\Q{\mathbb{Q}}
\def\R{\mathbb{R}}
\def\Z{\mathbb{Z}}
\def\aabar{\overline{\aa}}
\def\bbbar{\overline{\bb}}
\def\ccbar{\overline{\cc}}
\def\xbar{\overline{x}}
\def\Qbar{\overline{\Q}}
\def\CC{\mathbb{C}}
\def\ZZ{\mathbb{Z}}
\def\Cc{\mathcal{C}}
\def\Dc{\mathcal{D}}
\def\Mc{\mathcal{M}}
\def\Nc{\mathcal{N}}
\def\Df{\mathfrak{D}}
\def\Idf{\mathfrak{Id}}
\newcommand{\into}{\hookrightarrow}
\begin{document}

\title{Isogenous hyperelliptic and non-hyperelliptic Jacobians with maximal complex multiplication}

\date{\today}

\begin{abstract}
  We analyze complex multiplication for Jacobians of curves of genus 3, as well as the resulting Shimura class groups and their subgroups corresponding to Galois conjugation over the reflex field. We combine our results with numerical methods to find CM fields $K$ for which there exist both hyperelliptic and non-hyperelliptic curves whose Jacobian has complex multiplication by $\Z_K$. More precisely, we find all sextic CM fields $K$ in the LMFDB for which (heuristically) Jacobians of both types with CM by $\Z_K$ exist. There turn out to be 14 such fields among the 547,156 sextic CM fields that the LMFDB contains. We determine invariants of the corresponding curves, and in the simplest case we also give an explicit defining equation.
\end{abstract}

\thanks{The first and second author acknowledge financial support from the Thomas Jefferson Fund program, which is supported by the Embassy of France in the United States and the FACE Foundation. The third author was supported by the Juniorprofessuren-Programm ``Endomorphismen algebraischer Kurven'' of the Science Ministry of Baden-Württemberg as well as by a Sachbeihilfe ``Abstieg algebraischer
Kurven'' of the Deutsche Forschungsgemeinschaft. We thank John Boxall and Anna Somoza for bringing the André--Oort conjecture to our attention, as well as Marco Streng and Andrew Sutherland for their helpful comments on a previous version of this article and Tim Evink for his help with proving Proposition \ref{prop:torsor}. We thankfully acknowledge the use of computational resources provided by Dartmouth College. Finally, we are grateful to the anonymous referees for the many improvements to the article that they furnished.}

\author[Dina]{Bogdan Dina}
\address{
  Bogdan Dina,
  Eistein Institute of Mathematics, The Hebrew University of Jerusalem Givat Ram. Jerusalem, 9190401, Israel\\ (Former University: Institut für Algebra und Zahlentheorie,
  Universität Ulm,
  Helmholtzstrasse 18,
  D-89081 Ulm,
  Germany)
}
\email{bogdan.dina@mail.huji.ac.il}

\author[Ionica]{Sorina Ionica}
\address{
  Sorina Ionica,
  Laboratoire MIS,
  Universite de Picardie,
  33 Rue St-Leu,
  80039 Amiens Cedex 1,
  France
}
\email{sorina.ionica@u-picardie.fr}

\author[Sijsling]{Jeroen Sijsling}
\address{
  Jeroen Sijsling,
  Institut für Algebra und Zahlentheorie,
  Universität Ulm,
  Helmholtzstrasse 18,
  D-89081 Ulm,
  Germany
}
\email{jeroen.sijsling@uni-ulm.de}

\subjclass[2020]{14H40, 14H25; 14H45, 14H50, 14K20, 14K22}
\keywords{Complex multiplication, hyperelliptic curve, plane quartic}

\maketitle
\addtocontents{toc}{\protect\setcounter{tocdepth}{1}}

\section*{Introduction}\label{sec:introduction}
\numberwithin{equation}{section}

Because of their singular arithmetic properties and their cryptographic applications, curves of low genus whose Jacobian is simple and admits complex multiplication (CM) have historically been at the forefront of research on algebraic curves. By Shimura and Taniyama's theory of complex multiplication, it is well known that the invariants of these curves generate certain abelian extensions of CM fields. There is a wide literature on computing these invariants as well as models for genus 1 and genus 2 hyperelliptic curves with CM (see for instance \cite{BBEL, Enge-genus1, engethome, streng-algorithm}). In recent years, the frontier has shifted to genus 3, where both hyperelliptic and non-hyperelliptic curves can be considered. Hyperelliptic CM curves over $\Q$ whose endomorphism algebras are not generated by their automorphisms were constructed in \cite{weng2}, and this method was generalized in \cite{BILV}. Starting from the observation that the Galois conjugates of a hyperelliptic Jacobian are once more hyperelliptic Jacobians, \cite{DI} computes the Rosenhain and Shioda invariants of hyperelliptic curves in the Galois orbit in order to exhibit equations over the corresponding class fields.

Non-hyperelliptic CM curves were first studied in the form of Picard curves, whose first construction goes back to \cite{koikeweng}, the methods of which were recently generalized in \cite{somoza}. The reduction properties of CM Picard curves have also been studied in detail \cite{kilicer1}. The consideration of general non-hyperelliptic CM curves was taken up in the context of work of Kılıçer, who in her thesis \cite{kilicer3} determined the full list of sextic CM fields for which there exists a CM curve with field of moduli $\Q$. Explicit defining equations of these curves were found in \cite{resnt} in the hyperelliptic case and \cite{KLRRSS} in the non-hyperelliptic case, which included 19 non-hyperelliptic, non-Picard CM curves. Reduction properties in the general case were studied in the works \cite{wincm,kilicer2}.

The explicit defining equations in the works \cite{resnt,KLRRSS,weng2} mentioned above are \defi{heuristic}, in the sense that while the results obtained are exceedingly likely to be correct, their rigorous verification, available in theory by using the methods in \cite{cmsv-endos}, is still open because of the long running time of the algorithms in \loccit\ The current work, which will obtain defining equations and invariants of CM curves over $\Qbar$, will also take this heuristic approach throughout. Still, we will give multiple reasons to believe in the correctness of its results.

In \cite{kilicer3}, Kılıçer determined the sextic CM fields $K$ for which there exists a curve $X$ over $\Q$ with CM by $\Z_K$. For all of these fields $K$, there turns out to be a unique such curve $X$ over $\Q$ up to $\Qbar$-isomorphism. Often this is simply the case because there exists only a single $\Qbar$-isomorphism class of principally polarized abelian threefolds with CM by $\Z_K$ at all, see \cite[Table 1]{KLRRSS}. Yet there are also cases where, while there is only a single curve $X$ defined over $\Q$ with CM by $\Z_K$ up to $\Qbar$-isomorphism, there are other $\Qbar$-isomorphism classes of curves with such CM (defined over proper extensions of $\Q$). In these cases, it still turns out that \emph{all} CM curves over $\Qbar$ thus obtained, and not only the curve $X$ defined over $\Q$, are hyperelliptic. This happens because we have $K \supset \Q (i)$ for all $K$ for which there exist multiple $\Qbar$-isomorphism classes of CM curves. Indeed, the classification of possible automorphism groups of hyperelliptic and non-hyperelliptic curves of genus 3 (see for example \cite{LLRS}) shows that if the sextic CM field $K$ contains $\Q (i)$, then any curve whose Jacobian has primitive CM by $K$ automatically hyperelliptic.

The goal of this paper is to explore the opposite of this phenomenon: We ask ourselves whether there are sextic CM fields $K$ for which there exist both a hyperelliptic \emph{and} a non-hyperelliptic curve whose endomorphism ring is isomorphic to the maximal order $\Z_K$ of $K$.

To achieve our goal, we needed to develop and implement new computational methods; indeed, in order to check the existence of both types of curves for all 547,156 sextic CM fields included in the LMFDB \cite{lmfdb}, we need to be able to inspect the curves over a given sextic CM field $K$ in a rapid fashion. To this end, we first note that the property of a CM curve being hyperelliptic does not change under Galois conjugation (cf.\ \cite[Theorem 4.2]{DI}), and as is mentioned in Remark \ref{rem:factor}(ii), considering curves up to Galois conjugation often cuts down their number by a two- or even three-digit factor. The key to making possible this consideration of CM curves up to Galois conjugation without actually determining these curves algebraically (which is too laborious by far) is to use the Main Theorem of complex multiplication, given in the form of Theorem \ref{thm:srecip} in this article. This theorem shows that given a fixed primitive CM type $\Phi$, the Galois conjugates of a curve with CM of type $\Phi$ over the reflex field of $\Phi$ correspond to the image, call it $H$, of the reflex type norm of $\Phi$ in the Shimura class group $\Cc_K$. This latter group acts transitively on the set CM curves with CM type $\Phi$ up to isomorphism; in fact, this set is a torsor under $\Cc_K$ by Proposition \ref{prop:torsor}. Using Proposition \ref{prop:bb_square_image_rtypenorm} and \ref{prop:bb_dsquare_image_rtypenorm}, we show that the quotient $\Cc_K / H$ is a quotient of a group $\Cc_K / \widetilde{H}$ of small exponent, where $\widetilde{H} \subset H$ is a group that we can determine starting from $\Cc_K$ without actually computing the group $H$ itself, which is again too involved. An exact and explicit statement is given in Theorem \ref{thm:boundexp}. On the way to this theorem, we prove several results on the CM types of sextic CM fields and their reflex types in Section \ref{sec:cm}.

This means that the set of isomorphism classes up to Galois conjugation of CM curves that admit CM of type $\Phi$ is exhausted by representatives of the relatively small group $\Cc_K / \widetilde{H}$. Moreover, our techniques allow us to describe these representatives in terms of pairs $(\aa, \xi)$ considered in \cite{shimura-taniyama}, as reviewed in Section \ref{sec:geometric_relevance}. Algorithm \ref{alg:algo3} shows how to determine these pairs explicitly, as well as how to calculate corresponding small period matrices. Doing this efficiently uses techniques that were also considered in \cite{engethome} in lower genus, which we briefly elaborate upon. Checking whether an even theta-null value is zero then allows us to see which of these small period matrices give rise to hyperelliptic or non-hyperelliptic curves.

Our first main result is the following.

\begin{MT1}\label{mainresult:1}
  Heuristically, there are 14 sextic CM fields $K$ in the LMFDB for which there exist both a hyperelliptic and a non-hyperelliptic curve whose Jacobian has primitive complex multiplication by the maximal order $\Z_K$ of $K$. For all of these fields $K$ we have that $\Gal (K \ext \Q) \simeq C_2^3 \rtimes S_3$.
\end{MT1}

Though we cannot seriously formulate a conjecture in this direction for lack of mathematical rigor, circumstantial evidence does to some extent suggest that the sextic CM fields obtained in Main Result 1 are in fact all of their kind. Indeed, the largest absolute value of the discriminant of the fields in Main Result 1 equals $5.40 \cdot 10^{10}$, whereas the largest such value for the 494,386 sextic CM fields in the LMFDB with $\Gal (K \ext \Q) \simeq C_2^3 \rtimes S_3$ equals $1.78 \cdot 10^{17}$. Sorted by discriminant, the index of the field with largest discriminant in Main Result 1 equals $35,447$.

We also state an additional result on hyperelliptic curves. For more detailed information, see Section \ref{sec:fields}, and in particular Table \ref{tab:results}.

\begin{MT2}\label{mainresult:2}
  Heuristically, including the fields mentioned in Main Result 1, there are 3,422 CM fields $K$ in the LMFDB for which there exists a hyperelliptic curve whose Jacobian has primitive complex multiplication by the maximal order $\Z_K$ of $K$. Of these fields, 348 (resp.\ 3,057, resp.\ 17) have Galois group isomorphic to $C_6$ (resp.\ $D_6$, resp. $C_2^3 \rtimes S_3$). We have $\Q (i) \subset K$ for all but 19 of these fields $K$. Among the exceptional cases, 2 (resp.\ 17) have Galois group isomorphic to $C_6$ (resp. $C_2^3 \rtimes S_3$).
\end{MT2}

Families of fields containing $\Q(i)$ are quickly found, for example by considering those defined by polynomials of the form $x^6 + d^2$. Examples of hyperelliptic curves with CM by such fields were already computed in \cite{weng2}. In this sense, the exceptional cases with $\Q (i) \not\subset K$ are also the more interesting ones. For the 2 cyclic cases among them, equations for corresponding hyperelliptic curves were known \cite{shimura-taniyama,resnt}. By contrast, the 17 exceptional fields with Galois group $C_2^3 \rtimes S_3$ are completely new. Note that these cases include the fields from Main Result 1.

Besides determining the fields involved, we can also find corresponding invariants by using the fast algorithms from \cite{labrande}. Our final main result even gives a defining equation for the field in the Main Result 1 with smallest absolute discriminant.

\begin{MT3}
  Let $K$ be the CM field of discriminant $-1 \cdot 2^8 \cdot 359^2$ defined by the polynomial $t^6 + 10 t^4 + 21 t^2 + 4$, and let $r$ be a zero of the polynomial $t^4 - 5 t^2 - 2 t + 1$. Consider the hyperelliptic curve
  \begin{equation}\label{eq:Xsmall}
    \begin{aligned}
      X : \quad \scriptstyle{y^2 \; = \;} & \scriptstyle{x^8 + (-28 r^3 - 4 r^2 + 132 r + 84) x^7 + (-600 r^3 - 160 r^2 + 2920 r + 2044) x^6} \\
      & \scriptstyle{+ (-3532 r^3 - 940 r^2 + 17224 r + 11944) x^5 + (9040 r^3 + 2890 r^2 - 44860 r - 31460) x^4} \\
      & \scriptstyle{+ (167536 r^3 + 49480 r^2 - 824532 r - 576212) x^3 + (-226976 r^3 - 64932 r^2 + 1113648 r + 776872) x^2} \\
    & \scriptstyle{+ (-244204 r^3 - 69572 r^2 + 1197716 r + 835300) x + (319956 r^3 + 94725 r^2 - 1575062 r - 1100801)}
    \end{aligned}
  \end{equation}
  and the smooth plane quartic curve
  \begin{equation}\label{eq:Ysmall}
    \begin{aligned}
      Y : \quad & \scriptstyle{(14106 r^3 - 150652 r^2 + 185086 r + 292255) x^4 + (-171112 r^3 + 44200 r^2 + 916008 r + 93360) x^3 y} \\
      & \scriptstyle{+ (-120788 r^3 + 49032 r^2 + 382244 r + 300708) x^3 z + (467744 r^3 - 209864 r^2 - 2160704 r + 183416) x^2 y^2} \\
      & \scriptstyle{+ (-72248 r^3 + 64768 r^2 + 347488 r - 362984) x^2 y z + (5720 r^3 - 12378 r^2 - 15628 r + 50692) x^2 z^2} \\
      & \scriptstyle{+ (-512608 r^3 + 349824 r^2 + 2423616 r - 580448) x y^3 + (202192 r^3 - 151024 r^2 - 1180320 r + 403568) x y^2 z} \\
      & \scriptstyle{+ (6512 r^3 - 11272 r^2 + 178120 r - 71336) x y z^2 + (-11832 r^3 + 12268 r^2 - 844 r + 1376) x z^3} \\
      & \scriptstyle{+ (263424 r^3 - 176880 r^2 - 1159232 r + 335040) y^4 + (-201216 r^3 + 100448 r^2 + 856096 r - 249632) y^3 z} \\
      & \scriptstyle{+ (62112 r^3 + 1984 r^2 - 226512 r + 71624) y^2 z^2 + (- 12520 r^3 - 13112 r^2 + 27736 r - 5360) y z^3} \\
      & \scriptstyle{+ (1526 r^3 + 2411 r^2 - 658 r + 197) z^4 \; = \; 0.}
    \end{aligned}
  \end{equation}
  Heuristically, there exists an isogeny of degree $2$ between the Jacobians of $X$ and $Y$, and both have CM by the maximal order $\Z_K$.
\end{MT3}

The paper is structured as follows. In Sections \ref{sec:cm} and \ref{sec:shimura} we review the theory on CM fields and their Shimura class groups that we need. In particular, we prove general results on the transitivity of the Galois action on CM types and on the image of the reflex type norm that allow us to determine a small set of representatives of principally polarized abelian threefolds with CM by a given ring of integers $\Z_K$ up to Galois conjugation over the reflex field. In Section \ref{sec:lmfdb} we use these results and further speedups to check the 547,156 sextic CM fields in the LMFDB for the existence of a corresponding hyperelliptic curve, which leads to Main Results 1 and 2.

Section \ref{sec:equations} discusses techniques for determine explicit defining equations, and includes the proof of the third Main Result. We conclude the paper by some discussions around the relevance of the André--Oort conjecture to our considerations in Section \ref{sec:andreoort}.

A full implementation of our techniques in \Magma\ \cite{magma} is an essential part of our results. It is available online at \cite{dis-github}.

\subsection*{Notations and conventions}

In this article a \defi{curve} over a field $k$ is a separated and geometrically integral scheme of dimension $1$ over $k$. Given an affine equation for a curve, we will identify it with the smooth projective curve that has the same function field. The Jacobian of a curve $X$ is denoted by $\Jac (X)$. We denote the cyclic group with $n$ elements by $C_n$ and the dihedral group with $2 n$ elements by $D_n$.

When the context allows it, we often use the abbreviated notation $A$ for a principally polarized abelian variety $(A, E)$, as well as using the abbreviation ``ppav'' to stand for ``principally polarized abelian variety''. Finally, as in the introduction, we will call a curve $X$ of genus $g$ over an algebraically closed field a \defi{CM curve} if the endomorphism ring of its Jacobian $\Jac (X)$ is isomorphic to an order in a CM \defi{field} of degree $2 g$, so that by this definition, the CM type of a CM curve is primitive.

\numberwithin{equation}{subsection}
\addtocontents{toc}{\protect\setcounter{tocdepth}{3}}
\tableofcontents

\section{Theoretical background}\label{sec:cm}

For more on the general theory of complex multiplication, we refer to \cite[\S 4]{streng-algorithm}. We supplement this background with some additional considerations, most of them specific to genus $3$, that we will need for later results.

\subsection{Structure of sextic CM fields}

A number field $K$ is called a \defi{CM field} if $K$ is a totally imaginary quadratic extension of a totally real number field. Given $K$, the latter field is determined uniquely; we denote it by $K_0$. As \cite[p6]{Lang} shows, for any CM field $K$ there exists a unique element $\rho \in \Aut (K)$ such that
\begin{equation}
  \iota (\rho (x)) = \overline{\iota(x)}
\end{equation}
for all embeddings $\iota:K \into \C$. We call $\rho$ the \defi{complex conjugation} on $K$. Moreover, the Galois closure of a CM field is once again a CM field. We will consider sextic CM fields in this article. The corresponding Galois groups can be described as follows:

\begin{theorem}\label{thm:sexticCM}
  Let $K$ be sextic CM field, with Galois closure $L$. Then $G = \Gal(L \ext \Q)$ is isomorphic to one of the following groups:
  \begin{enumerate}
    \item $C_6$;
    \item $D_6$;
    \item $C_2^3 \rtimes C_3$;
    \item $C_2^3 \rtimes S_3$.
  \end{enumerate}
  In the latter two cases, the action of $C_3$ and $S_3$ on $C_2^3$ is given by permutation of the indices. Each possible group $G$ above admits a unique embedding $\iota : G \to S_6$ up to conjugation in $S_6$, under which they become the groups 6T1, 6T3, 6T6, 6T11 from \cite{gap-database}.
\end{theorem}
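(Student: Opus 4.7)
The plan is to analyse the group structure via the short exact sequence $1 \to N \to G \to H \to 1$, where $L_0$ denotes the maximal totally real subfield of $L$, $H = \Gal(L_0/\Q)$, and $N = \Gal(L/L_0)$. Since $K_0$ is a totally real cubic field with Galois closure $L_0$, we have $H \simeq C_3$ or $H \simeq S_3$. Writing $K = K_0(\sqrt{\alpha})$ with $\alpha$ totally negative, the Galois closure $L$ is generated over $L_0$ by square roots of the three $\Q$-conjugates $\alpha_1, \alpha_2, \alpha_3$ of $\alpha$, so $N$ embeds as an $H$-stable subgroup of $(\Z/2\Z)^3$ with $H$ acting by permutation of the coordinates.

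I would next enumerate the $H$-invariant subgroups of $C_2^3$. For both $H = C_3$ and $H = S_3$ these are the trivial subgroup, the diagonal $\langle (1,1,1) \rangle$, the kernel of the sum map, and $C_2^3$ itself. The trivial option is excluded because $K$ is not totally real, leaving six a priori possibilities. A crucial ingredient is that complex conjugation $\rho \in G$ is central: for any $\sigma \in G$ and any embedding $\iota : L \into \C$ the identity $\iota(\rho\sigma(x)) = \overline{\iota\sigma(x)} = \iota(\sigma\rho(x))$ holds, forcing $\rho\sigma = \sigma\rho$. When $N$ equals the kernel of the sum map, $N \simeq C_2^2$ and the action of $H$ on its three non-trivial elements is faithful, so the resulting semidirect products are $A_4$ and $S_4$ respectively; both have trivial centre and hence cannot contain a central involution, so these two cases are ruled out. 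In the diagonal cases the action of $H$ on $N$ is trivial, so $G \simeq C_2 \times H$, yielding $C_6$ or $D_6$; in the remaining two cases the action is the natural permutation action, giving $C_2^3 \rtimes C_3$ and $C_2^3 \rtimes S_3$.

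To identify the embedding into $S_6$ up to conjugation, I would describe the coset action of $G$ on $G / \Gal(L/K)$, which realises the action of $G$ on the six embeddings $K \into \Qbar$. In each case $\Gal(L/K)$ is pinned down up to $G$-conjugacy by the identities $\Gal(L/K) \cap N = \Gal(L/KL_0)$ and $\Gal(L/K) N / N = \Gal(L_0/K_0)$, together with the constraint $\rho \notin \Gal(L/K)$ (since $K$ is totally imaginary, so $K \not\subset L_0$). Computing $[KL_0 : L_0]$ in each of the four cases determines $\Gal(L/K)$ explicitly, and the corresponding coset action is then directly matched against the four transitive degree-$6$ groups 6T1, 6T3, 6T6, 6T11 in the GAP database.

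The main obstacle is keeping the case analysis cleanly organised, particularly the use of the central involution $\rho$ to eliminate the would-be $A_4$ and $S_4$ cases and the identification of the conjugacy class of $\Gal(L/K)$ in the two cases where $N = C_2^3$; otherwise no step requires more than elementary group theory.
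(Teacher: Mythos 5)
Your strategy --- analyzing $G$ via $1 \to N \to G \to H \to 1$ with $H$ the Galois group of the Galois closure $M$ of $K_0$ and $N$ an $H$-stable subgroup of $(\Z/2\Z)^3$ cut out by Kummer theory, then using the central involution $\rho$ to prune the list --- is sound in outline and genuinely more self-contained than the paper's proof, which simply cites Dodson for the group list and a GAP computation for the embeddings into $S_6$. One correction of wording first: the maximal totally real subfield of $L$ is \emph{not} the Galois closure $M$ of $K_0$ in general. Since $L$ is itself a CM field, $\Gal(L \ext L_0^{\max}) = \langle \rho \rangle$ always has order $2$, whereas in the $C_2^3$ cases one has $[L_0^{\max} : M] = 4$. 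Your argument is correct once $L_0$ is taken to be $M$ throughout; as literally written, the sequence you set up would force $N \cong C_2$ in every case.

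The genuine gap is the passage from ``the conjugation action of $H$ on $N$ is trivial (resp.\ the permutation action)'' to ``$G$ is the corresponding direct (resp.\ semidirect) product'': this requires the extension to split, which is not automatic when $H = S_3$. In the diagonal case there is a competing non-split central extension of $S_3$ by $C_2$, namely the dicyclic group of order $12$; in the full case $H^2(S_3, \F_2^3) \cong H^2(C_2, \F_2) \cong \Z/2\Z$ by Shapiro's lemma, so there is a non-split extension of order $48$, and it is not isomorphic to $C_2 \times S_4$ (it contains the dicyclic group of order $12$, while $C_2 \times S_4$ does not, since no $(2,2)$-element of $S_4$ centralizes a $3$-cycle). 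Both spurious groups satisfy every constraint you have imposed up to that point, so they must be excluded by hand. This is easy: the dicyclic group has a unique involution, so the order-$2$ subgroup $\Gal(L \ext K)$, which cannot contain $\rho$ because $K$ is not totally real, cannot exist inside it; and in the order-$48$ case the quotient of $G$ by $W = \ker(\mathrm{sum}) \subset N$ is $\Gal(M(\sqrt{N_{K_0|\Q}(\alpha)}) \ext \Q)$, the Galois group of the compositum of $M$ with an imaginary quadratic field, hence $S_3 \times C_2$ rather than dicyclic, which forces the extension class to vanish. (Your $A_4$/$S_4$ elimination survives as stated, since any extension of $C_3$ or $S_3$ by $W$ with that faithful action has no central involution, split or not; alternatively, observe that $\rho$ lies in $N$ and equals $(1,1,1)$ under the Kummer map, which rules out $N = 1$ and $N = W$ at once.) With these two repairs, and the finite check of conjugacy of the point stabilizers that you already foresee for the $S_6$-embedding, the argument is complete.
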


\begin{proof}
  The first part follows from \cite[Sec. 5.1.1]{Dodson}; see also \cite[Proposition 2.1]{wincm}. The second is a one-off calculation with the conjugacy classes of subgroups of $S_6$, for example by using GAP \cite{gap-database}.
\end{proof}

The notation 6TX for the groups in Theorem \ref{thm:sexticCM} can be used when searching for corresponding fields in the LMFDB \cite{lmfdb}.

\begin{remark}\label{rem:H}
  The second part of Theorem \ref{thm:sexticCM} in combination with Galois theory shows that we may assume that under the chosen embedding $\iota : G \to S_6$ the subgroup $H = \Gal (L \ext K)$ is the stabilizer of $1$.
\end{remark}

\begin{example}
  The sextic CM field with smallest absolute discriminant in the LMFDB whose Galois group is isomorphic to $C_6$ is \href{https://www.lmfdb.org/NumberField/6.0.16807.1}{$\Q (\zeta_7)$}. The field of smallest absolute discriminant with Galois group $D_6$ is defined by \href{https://www.lmfdb.org/NumberField/6.0.309123.1}{$x^6 - 3 x^5 + 10 x^4 - 15 x^3 + 19 x^2 - 12 x + 3$}, that for $C_2^3 \rtimes C_3$ by \href{https://www.lmfdb.org/NumberField/6.0.400967.1}{$x^6 - 2 x^5 + 5 x^4 - 7 x^3 + 10 x^2 - 8 x + 8$}, and that for $C_2^3 \rtimes S_3$ by \href{https://www.lmfdb.org/NumberField/6.0.503792.1}{$x^6 - 3 x^5 + 9 x^4 - 13 x^3 + 14 x^2 - 8 x + 2$}.
\end{example}

\subsection{CM types}

As above, let $K$ be a CM field with complex conjugation $\rho$ and Galois closure $L$. We further fix an embedding $\iota_L : L \to \C$.

\begin{definition}\label{def:CMType}
  A \defi{CM type of $K$ (with values in $L$)} is a subset $\Phi \subset \Hom (K, L)$ such that
  \begin{equation}
    \Hom (K, L) = \Phi \amalg \Phi \rho .
  \end{equation}
\end{definition}

As in the classical case, we call a CM type of $K$ \defi{primitive} if it is not induced by a CM type of a strict CM subfield. Similarly, we call two CM types $\Phi, \Phi'$ \defi{equivalent} if there exists an automorphism $\alpha \in \Aut (K)$ such that $\Phi' = \Phi \alpha$. As for example in \cite{Lang}, we also call the pair $(K, \Phi)$ a CM type.

\begin{remark}
Our choice of an embedding $\iota_L : L \to \C$ yields a map
  \begin{equation}
    \Phi \mapsto \left\{ \iota_L \circ \tau : \tau \in \Phi \right\}
  \end{equation}
  which yields a bijection between the CM types in Definition \ref{def:CMType} and the CM types of $K$ in the classical sense of a set of embeddings into $\C$. For our purposes, it is more useful to consider the former type.
\end{remark}

\begin{remark}
  Let $H = \Gal (L \ext K)$. Then the natural map
  \begin{equation}
    \begin{aligned}
      \Gal (L \ext \Q) & \to \Hom (K, L) \\
      \sigma & \mapsto \sigma|_K
    \end{aligned}
  \end{equation}
  induces a bijection $G / H \to \Hom (K, L)$. We can and therefore will consider the individual embeddings $\tau : K \to L$ in a CM type as cosets $\sigma H$ in $G / H$. Under this interpretation, a CM type is nothing but a section of the natural projection map
  \begin{equation}
    G / H \to \langle \rho \rangle \backslash G / H .
  \end{equation}
  Alternatively, it is a subset $\Phi \subset G / H$ on which the restriction of this projection map induces an bijection.

  The normalizer $N = N_G (H)$ of $H$ in $G$ acts on the set of sections $s : \langle \rho \rangle \backslash G / H \to G / H$ via right composition, and using the natural isomorphism $N / H \cong \Aut (K)$ induced by restriction, we see that the corresponding quotient is in bijection with the set of CM types up to equivalence. We determine this normalizer in the following proposition.
\end{remark}

\begin{proposition}\label{prop:norm}
  Let $G = \Gal (L \ext \Q)$ be one of the Galois groups in Theorem \ref{thm:sexticCM} and let $H = \Gal (L \ext K)$. Let $N = N_G (H)$ be the normalizer of $H$ in $G$. Then we have $N = \langle H, \rho \rangle$ except if $G = C_6$, in which case $N = G$.
\end{proposition}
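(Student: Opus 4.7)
The plan is to split into the case $G = C_6$ and the three remaining cases from Theorem \ref{thm:sexticCM}, and in each non-cyclic case to pin down the order of $N/H \cong \Aut(K)$ via a dichotomy that uses $\Aut(K_0)$.

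In the $C_6$ case, $G$ is abelian so every subgroup is normal and $N = G$, as desired. For the three non-abelian Galois groups, I would first observe that $L$ is itself a CM field (being the Galois closure of one) with a well-defined complex conjugation $\rho$, and that $\rho$ lies in the centre of $G$. The reason is that for any embedding $\iota \colon L \hookrightarrow \C$ and any $\sigma \in G$, one checks $\iota \circ (\sigma \rho \sigma^{-1}) = \overline{\iota \circ \sigma} \circ \sigma^{-1} = \overline{\iota}$, so $\sigma \rho \sigma^{-1}$ is again the unique complex conjugation on $L$ and thus equals $\rho$. In particular $\rho \in N$, and since $\rho|_K$ is the non-trivial element of $\Gal(K \ext K_0)$ we have $\rho \notin H$. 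Thus $\langle H, \rho \rangle \subseteq N$ with $[\langle H, \rho \rangle : H] = 2$, and it remains to show $[N:H] = 2$.

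The remaining inequality $[N:H] \leq 2$ is equivalent to $|\Aut(K)| \leq 2$. For this I would use that the restriction map $\Aut(K) \to \Aut(K_0)$ has kernel $\Aut(K \ext K_0)$ of order $2$, because $K \ext K_0$ has degree $2$ and is automatically Galois. Moreover $|\Aut(K_0)| \in \{1,3\}$, since $K_0$ is cubic over $\Q$. Consequently either $|\Aut(K)| = 2$, or the restriction is surjective onto $\Aut(K_0) = C_3$ and $|\Aut(K)| = 6$; the latter happens exactly when $K \ext \Q$ is Galois, i.e.\ when $H$ is normal in $G$. So the claim reduces to showing that $H$ is \emph{not} normal in $G$ in each of the three cases.

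The final step is where the explicit Galois-theoretic structure enters: under the embedding $\iota \colon G \hookrightarrow S_6$ of Theorem \ref{thm:sexticCM}, which by Remark \ref{rem:H} identifies $H$ with the stabilizer of $1$, the action of $G$ on $G/H$ is faithful, so the core $\bigcap_{g \in G} gHg^{-1}$ is trivial. Since in the three non-cyclic cases one has $|H| \in \{2, 4, 8\}$ (as $|G| \in \{12, 24, 48\}$), $H$ is strictly larger than its core and hence not normal. This yields $K \ext \Q$ non-Galois, $|\Aut(K)| = 2$, and finally $N = \langle H, \rho \rangle$. I do not anticipate a serious obstacle here; the only subtle point is the centrality of $\rho$, which follows from the characterization of complex conjugation by simultaneous compatibility with all embeddings into $\C$.
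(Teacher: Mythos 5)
Your proof is correct, and it takes a genuinely different route from the paper's. The paper simply realizes the four groups explicitly inside $S_6$ (as 6T1, 6T3, 6T6, 6T11, with $H$ the stabilizer of $1$ and $\rho$ written out as a concrete element) and then computes the normalizer directly, dismissing this as ``an unenlightening and unproblematic calculation.'' You instead argue structurally: centrality of $\rho$ gives $\langle H, \rho\rangle \subseteq N$ of index $2$ over $H$; the identification $N/H \cong \Aut(K)$ together with the fact that $K_0$ is cubic forces $|\Aut(K)| \in \{2,6\}$; and $|\Aut(K)| = 6$ is excluded because $H$ has trivial core in $G$ (being the point stabilizer of the faithful action defining the Galois closure) while $|H| > 1$ in the three non-cyclic cases, so $H$ is not normal and $K \ext \Q$ is not Galois. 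This handles the three non-abelian cases uniformly, with no case analysis, and in fact proves the statement for \emph{any} sextic CM field not Galois over $\Q$, independently of the classification in Theorem \ref{thm:sexticCM}; the paper's computation, by contrast, produces the explicit normalizers that are reused in later arguments. Two small points you leave implicit: the restriction map $\Aut(K) \to \Aut(K_0)$ is well defined because $K_0$, being the maximal totally real subfield (equivalently, the fixed field of the central complex conjugation on $K$), is preserved by every automorphism of $K$; and $[\langle H,\rho\rangle : H] = 2$ uses that $\rho$ is a central involution not in $H$, so $\langle H, \rho\rangle = H \cup H\rho$. Both are immediate, and your key step --- the characterization of complex conjugation by compatibility with all embeddings into $\C$, hence its centrality --- is exactly the fact the paper itself invokes elsewhere (e.g.\ in Definition \ref{def:galeq}).
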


\begin{proof}
  We have realized our Galois groups as the explicit subgroups 6T1, 6T3, 6T6, 6T11, and Remark \ref{rem:H} shows that we may take $H$ to be the stabilizer of $1$. We can choose our embeddings $G \to S_6$ in such a way that we have the following, as used throughout the paper:
  \begin{enumerate}
    \item $G = C_6 = \langle \sigma \rangle$: $H = 1$, $\rho = \sigma^3$.
    \item $G = D_6 = \langle \sigma, \tau \rangle$: $H = \langle \tau \rangle$, $\rho = \sigma^3$.
    \item $G = C_2^3 \rtimes C_3$: $H = \langle ((1,0,0), e), ((0,1,0), e) \rangle$, $\rho = ((1,1,1), e)$, where $e \in C_3$ is the identity.
    \item $G = C_2^3 \rtimes S_3$: $H = \langle ((1,0,0), e), ((0,1,0), e), ((0,0,0), (1,2)) \rangle$, $\rho = ((1,1,1), e)$, where $e \in S_3$ is the identity.
  \end{enumerate}
  The result is now an unenlightening and unproblematic calculation.
\end{proof}

\begin{definition}\label{def:galeq}
  There is a natural left action of the Galois group $G = \Gal (L \ext \Q)$ on CM types $\Phi$ of $K$. As subsets $\Phi \subset G / H$, we have
  \begin{equation}
    \sigma \Phi = \left\{ \sigma \phi : \phi \in \Phi \right\}
  \end{equation}
  for $\sigma \in G$. On CM types considered as sections $s$ of the projection map $G / H \to \langle \rho \rangle \backslash G / H$, the action is defined by
  \begin{equation}
    (\sigma s) (\langle \rho \rangle c H) = \sigma \cdot s (\langle \rho \rangle \sigma^{-1} c H)
  \end{equation}
  for $\sigma, c \in G$. Note that these actions are well-defined because $\rho$ is central in $G$. We call the resulting equivalence on the set of CM types of $K$ the \defi{Galois equivalence}.
\end{definition}

\begin{remark}\label{rem:galeq}
  As is shown in Proposition \ref{prop:tang}, the equivalence in Definition \ref{def:galeq} reflects Galois conjugation of abelian varieties, in the sense that if $A$ is an abelian variety that admits $\Phi$ as a CM type and $\sigma$ is an automorphism of $\C$, then the conjugate $\sigma A$ of $A$ admits $\sigma \Phi$ as a CM type.
\end{remark}

\begin{proposition}\label{prop:c6}
  Let $K$ be a sextic CM field with Galois group $C_6$. Then $K$ admits $2$ CM types up to equivalence, $1$ of them primitive and $1$ imprimitive. The same is true when replacing equivalence with Galois equivalence.
\end{proposition}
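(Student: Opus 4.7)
The plan is to unwind the general framework of this section in the special case $G = C_6$ and enumerate directly. By Remark \ref{rem:H} we may take $H = \{1\}$, so $K = L$ and $G/H = G$. The quotient $\langle\rho\rangle\backslash G$ has three elements, so there are exactly $2^3 = 8$ CM types of $K$, one for each section of the natural projection $G \to \langle\rho\rangle\backslash G$. By Proposition \ref{prop:norm}, $N = G$, so the equivalence action of $\Aut(K) \cong N/H = G$ on the set of CM types is just the natural action of $G$ on itself by multiplication.

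With $\sigma$ a generator of $G$ and $\rho = \sigma^3$, I would pick explicit representatives $\Phi_0 = \{1, \sigma, \sigma^2\}$ and $\Phi_1 = \langle \sigma^2 \rangle = \{1, \sigma^2, \sigma^4\}$. Since $\rho$ sends every CM type to its set-theoretic complement in $G$, a short direct check shows that the $G$-orbit of $\Phi_0$ consists of the six ``runs of three consecutive powers of $\sigma$'', while the $G$-orbit of $\Phi_1$ is the two-element set $\{\langle\sigma^2\rangle,\, \sigma\langle\sigma^2\rangle\}$. Since $6 + 2 = 8$, this accounts for all CM types and yields exactly two equivalence classes.

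To distinguish primitive from imprimitive, I would exploit the subfield lattice: since $K/\Q$ is cyclic of degree $6$, it has a unique proper nontrivial CM subfield, namely the imaginary quadratic subfield $K'' = L^{\langle\sigma^2\rangle}$ (imaginary because $\rho = \sigma^3 \notin \langle\sigma^2\rangle$). A CM type of $K$ is induced from $K''$ iff it is a union of fibers of the restriction map $\Hom(K,L) \to \Hom(K'',L)$, i.e.\ a union of cosets of $\langle\sigma^2\rangle$ in $G$; those cosets are precisely $\Phi_1$ and its complement. So the orbit of $\Phi_1$ is the imprimitive class and the orbit of $\Phi_0$ is primitive.

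Finally, since $G = C_6$ is abelian, the left action of $G$ defining Galois equivalence coincides with the right action defining equivalence, and the two notions produce identical orbits. I do not anticipate any real obstacle here; the only point requiring a modicum of care is the identification of the induced CM types with unions of $\langle\sigma^2\rangle$-cosets, which is immediate from the description of induction in terms of the restriction map.
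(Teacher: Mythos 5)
Your argument is correct and follows essentially the same route as the paper: identify CM types with cardinality-three subsets of $C_6 \cong \Z/6\Z$ on which equivalence (and, since $G$ is abelian, Galois equivalence) acts by translation, enumerate the two orbits, and recognize the imprimitive class as the one induced from the unique imaginary quadratic subfield. You merely spell out the orbit count $6+2=8$ that the paper leaves as "readily verified."
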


\begin{proof}
  We can identify a CM type on $K$ with a subset $S \subset C_6 = \Z / 6 \Z$ of cardinality 3 such that $S$ and $3 + S$ cover $\Z / 6 \Z$. By Proposition \ref{prop:norm}, two such CM types $S, S'$ are equivalent if they are related by a translation, so that $S' = i + S$ for some $i \in \Z / 6 \Z$, and the same is true for Galois equivalence. As is readily verified, representatives up to equivalence are given by $\left\{ 0, 1, 5 \right\}$ and $\left\{ 0, 2, 4 \right\}$. The latter CM type is imprimitive, since it is induced from the quotient $\Z / 2 \Z$ of $\Z / 6 \Z$ that corresponds to the unique CM quadratic subfield of $K$. The former type is primitive. Since the Galois equivalence does not affect primitivity, we obtain the result. See \cite[\S 3.1]{wincm} for a different point of view.
\end{proof}

\begin{proposition}\label{prop:d6}
  Let $K$ be a sextic CM field with Galois group $D_6$. Then $K$ admits $4$ CM types up to equivalence, $3$ of them primitive and $1$ imprimitive. Up to Galois equivalence, $K$ admits $2$ CM types, $1$ of them primitive and $1$ imprimitive.
\end{proposition}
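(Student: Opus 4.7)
I would follow the same general scheme as the proof of Proposition \ref{prop:c6}, using the explicit presentation of $G = D_6$, $H = \langle\tau\rangle$, and $\rho = \sigma^3$ from the proof of Proposition \ref{prop:norm}. First I would identify $G/H$ with $\{0,1,2,3,4,5\}$ via $\sigma^i H \leftrightarrow i$, under which $\rho$ acts by $i \mapsto i+3 \pmod 6$. A CM type is then a transversal of the three pairs $\{0,3\}, \{1,4\}, \{2,5\}$, so there are $2^3 = 8$ CM types in total, which I would list explicitly.

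Next, for ordinary equivalence, Proposition \ref{prop:norm} gives $N/H = \langle \rho H\rangle \cong C_2$, and since $\rho$ is central in $G$ the resulting right action coincides with $\Phi \mapsto \rho\Phi$, i.e.\ with taking the complementary subset of $G/H$. This pairs the 8 CM types into $4$ equivalence classes, each consisting of a type and its complement. To isolate the imprimitive class, I would enumerate the subgroups strictly between $H$ and $G$ and observe that only $H' := \langle \tau, \sigma^2\rangle$ is not stable under $\rho$; this subgroup cuts out the (necessarily unique) quadratic CM subfield $F \subset K$. The CM type on $F$ induces on $K$ the two complementary types $\{0,2,4\}$ and $\{1,3,5\}$, which constitute a single equivalence class. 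The remaining $3$ equivalence classes are primitive, giving the first part of the statement.

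For Galois equivalence, by Definition \ref{def:galeq} I must compute the orbits of the left action of $G$ on the 8 CM types. The generator $\sigma$ acts by the cyclic shift $i \mapsto i+1$, and $\tau$ acts by $i \mapsto -i \pmod 6$ (using $\tau\sigma^i H = \sigma^{-i}\tau H = \sigma^{-i} H$). The imprimitive type $\{0,2,4\}$ is visibly fixed by both $\sigma$ (it becomes $\{1,3,5\}$, its complement, hence the same equivalence class) and by $\tau$, so its equivalence class is a Galois-fixed point. For the remaining three equivalence classes, a direct check with, say, $\Phi = \{0,1,2\}$ shows that the $\sigma$-orbit of its class already hits all three primitive classes (via $\sigma^2\Phi = \{2,3,4\}$ whose complement is $\{0,1,5\}$, and $\sigma\Phi = \{1,2,3\}$ whose complement is $\{0,4,5\}$). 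Hence the three primitive equivalence classes merge into a single Galois orbit, yielding $2$ Galois equivalence classes in total, one primitive and one imprimitive.

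The computation is entirely elementary; the only mild subtlety is to separate carefully the right action of $N/H$ (ordinary equivalence) from the left action of $G$ (Galois equivalence) and to notice that the primitive/imprimitive dichotomy can be read off from whether a CM type factors through the quotient $G/H' \cong \Z/2\Z$ attached to the unique quadratic CM subfield.
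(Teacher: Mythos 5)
Your proposal is correct and follows essentially the same route as the paper: identify $G/H$ with $\Z/6\Z$, use Proposition \ref{prop:norm} to see that ordinary equivalence amounts to complex conjugation (complementation), single out $\{0,2,4\}$ as the unique imprimitive class via the quadratic CM subfield, and then use the shift action of $\sigma$ to merge the three primitive classes into one Galois orbit. (One small wording slip: since $\rho$ is central every subgroup is stable under it, so the relevant criterion for $H'=\langle\tau,\sigma^2\rangle$ is that it does not \emph{contain} $\rho$; your conclusion is unaffected.)
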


\begin{proof}
  In this case we can choose a standard representation $D_6 = \langle \sigma, \tau \rangle$ and embed it into $S_6$ by identifying $\sigma$ with $(1\,2\,3\,4\,5\,6)$ and $\tau$ with $(2\,6)(3\,5)$. As we have seen in Proposition \ref{prop:norm}, the complex conjugation $\rho$ is given by the central element $\sigma^3$ and $\Gal (L \ext K) = \langle \tau \rangle$. The embeddings of $K$ into $L$ can therefore be identified with powers $\sigma^i$, or for that matter with elements $i$ of $\Z / 6 \Z$. We are in a similar situation as Proposition \ref{prop:c6}, except that the notion of equivalence is stricter, as Proposition \ref{prop:norm} shows that this time the only other CM type equivalent to a given type $\left\{ a, b, c \right\}$ is $\left\{ a + 3, b + 3, c + 3 \right\}$, which corresponds to applying complex conjugation.

  Up to equivalence, we obtain the $4$ CM types $\left\{ 0, 1, 5 \right\}$, $\left\{ 0, 1, 2 \right\}$, $\left\{ 0, 4, 5 \right\}$, $\left\{ 0, 2, 4 \right\}$. Of these types, $\left\{ 0, 2, 4 \right\}$ is induced by the unique quadratic CM subfield of $K$ and is therefore imprimitive, while the other types are primitive.

  Applying Galois equivalence allows us to multiply with $\sigma$, so as in Proposition \ref{prop:c6} we can apply arbitrary shifts to our subsets of $\Z / 6 \Z$ to our CM types. Once more this reduces us to the two types $\left\{ 0, 1, 2 \right\}$ and $\left\{ 0, 2, 4 \right\}$, the former primitive and the latter imprimitive. See \cite[\S 3.2]{wincm} for a different point of view.
\end{proof}

\begin{proposition}\label{prop:c23_c3_s3}
  Let $K$ be a sextic CM field with Galois group $C_2^3 \rtimes C_3$ or $C_2^3 \rtimes S_3$. Then $K$ admits $4$ CM types up to equivalence, which are all primitive. Up to Galois equivalence, $K$ admits $1$ CM type.
\end{proposition}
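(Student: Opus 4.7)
The plan is to count the sets of CM types under the two equivalence relations and then verify primitivity by ruling out proper CM subfields.

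First I would count CM types directly. By Proposition \ref{prop:norm} we have $\rho \notin H$, and since $\rho$ is central the action of $\langle \rho \rangle$ on $G/H$ is free. Hence $\langle \rho \rangle \backslash G/H$ has $3$ elements, and because a CM type is a section of the projection $G/H \to \langle \rho \rangle \backslash G/H$ the total number of CM types is $2^3 = 8$.

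Next, for (non-Galois) equivalence, Proposition \ref{prop:norm} gives $N = \langle H, \rho \rangle$, so $N/H$ has order $2$ with non-trivial class $\rho H$. This class sends $\Phi$ to $\Phi \rho$, its set-theoretic complement in $G/H$, which is always distinct from $\Phi$. Thus the $N/H$-action is free and partitions the $8$ CM types into $8/2 = 4$ classes.

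For Galois equivalence, I would show that the normal subgroup $C_2^3 \subset G$ already acts transitively on the $8$ CM types. Using the explicit presentation of $H$ from Proposition \ref{prop:norm}, I would list the six cosets of $H$ in $G$ organised by $\rho$-pairs, and check by direct computation that each of the three generators $e_1, e_2, e_3$ of $C_2^3$ fixes exactly two of the three $\rho$-orbits pointwise while interchanging the two cosets in the remaining one, with the three generators activating three different $\rho$-orbits. This means that, viewing the $8$ CM types as a torsor under the group of ``flips'' $(\Z/2\Z)^3$ indexed by $\rho$-orbits, the images of $e_1, e_2, e_3$ span the full flip group. Transitivity of the $G$-action then follows, so Galois equivalence has exactly $1$ orbit. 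The $C_2^3 \rtimes S_3$ case uses the same $C_2^3$ and the same verification carries over verbatim.

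Finally, for primitivity, I would show that $K$ admits no proper CM subfield. Subfields $K' \subsetneq K$ correspond to proper intermediate subgroups $H \subsetneq H' \subseteq G$, and $K'$ is a CM subfield exactly when $\rho \notin H'$. Since $|H|$ divides $|H'|$, the possibilities are $|H'| \in \{16, 24\}$ when $G = C_2^3 \rtimes S_3$ and $|H'| \in \{8, 12\}$ when $G = C_2^3 \rtimes C_3$. In each case I would argue as follows: the unique subgroup of index $3$ containing $H$ is a Sylow $2$-subgroup (namely $C_2^3$ in the $C_3$ case and $C_2^3 \rtimes \langle (1,2) \rangle$ in the $S_3$ case), and both contain $\rho = (1,1,1)$, while an explicit computation of $G^{\mathrm{ab}}$ and inspection of the kernels of its characters to $C_2$ shows that no index-$2$ subgroup of $G$ contains $H$. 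Hence the only CM subfields of $K$ are $K$ itself, and every CM type is automatically primitive, finishing the proof.

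The main obstacle is the last step: it is routine in principle but requires a careful case-by-case group-theoretic check for both Galois groups, and the choice to go through $G^{\mathrm{ab}}$ and Sylow theory rather than brute enumeration is what keeps it short.
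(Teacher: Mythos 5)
Your proposal is correct and follows essentially the same route as the paper: counting the $8$ CM types, halving by the action of $N/H = \langle \rho H\rangle$ to get $4$ equivalence classes, proving Galois transitivity by checking that each generator of $C_2^3$ flips exactly one of the three $\rho$-pairs of cosets (which is precisely the coset computation the paper carries out with $n_0, n_1, n_2$), and deducing primitivity from the fact that $H$ lies in no index-$2$ subgroup of $G$. Your index-$3$ Sylow check is superfluous since a cubic subfield can never be a CM field, but this does not affect correctness.
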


\begin{proof}
  The first statement follows as in Proposition \ref{prop:d6}, since in light of Proposition \ref{prop:norm} applying equivalence once again comes down to identifying complex conjugate CM types, leaving $4$ equivalence classes of the original $8$ types. All of these types are primitive because the group $H$ corresponding to $K$ in the notation of Proposition \ref{prop:norm} is not contained in any subgroup of $G$ of index $2$, or in other words because $K$ has no proper quadratic subfields, let alone proper CM subfields.

  As for working up to Galois equivalence, in the case $G = C_2^3 \rtimes C_3$ using the element $\sigma = ((0, 0, 0), (1\,2\,3))$ shows that
  \begin{equation}\label{eq:cosets}
    \begin{split}
                    H & = \left\{ ((*, *, 0),         e) \right\}, \\
             \sigma H & = \left\{ ((0, *, *), (1\,2\,3)) \right\}, \\
           \sigma^2 H & = \left\{ ((*, 0, *), (1\,3\,2)) \right\}, \\
               \rho H & = \left\{ ((*, *, 1),         e) \right\}, \\
        \sigma \rho H & = \left\{ ((1, *, *), (1\,2\,3)) \right\}, \\
      \sigma^2 \rho H & = \left\{ ((*, 1, *), (1\,3\,2)) \right\},
    \end{split}
  \end{equation}
  where $*$ denotes an element of $C_2$ that can be chosen freely. We see that $\Phi_0 = \left\{ H, \sigma H, \sigma^2 H \right\}$ is a CM type. Moreover, the definition of the Galois action along with that of the group structure on $G$ implies that for $n_1 = ((1, 0, 0), e)$ we have
  \begin{equation}\label{eq:neqs}
    \begin{split}
               n_1 H & = \left\{ ((*, *, 0), e) \right\} = H                     \\
        n_1 \sigma H & = \left\{ ((1, *, *), (1\,2\,3)) \right\} = \sigma \rho H \\
      n_1 \sigma^2 H & = \left\{ ((*, 0, *), (1\,3\,2)) \right\} = \sigma^2 H.
    \end{split}
  \end{equation}
  Similarly, $n_2 = ((0, 1, 0), e)$ sends $\Phi_0$ to $\left\{ H, \sigma H, \sigma^2 \rho H \right\}$ and $n_0 = ((0, 0, 1), e)$ sends $\Phi_0$ to $\left\{ \rho H, \sigma H, \sigma^2 H \right\}$. Combining the action of these three elements is enough to obtain transitivity of the Galois action on the full set of CM types $\left\{ \rho^*, \sigma \rho^*, \sigma^2 \rho^* \right\}$. The considerations for $G = C_2^3 \rtimes S_3$ are completely similar, and we can again use the element with $\sigma = ((0, 0, 0), (1\,2\,3))$ to show that
  \begin{equation}
    \begin{split}
                    H & = \left\{ ((*, *, 0),         \text{$e$ or $(1\,2)$}) \right\}, \\
             \sigma H & = \left\{ ((0, *, *), \text{$(1\,2\,3)$ or $(1\,3)$}) \right\}, \\
           \sigma^2 H & = \left\{ ((*, 0, *), \text{$(1\,3\,2)$ or $(2\,3)$}) \right\}, \\
               \rho H & = \left\{ ((*, *, 1),         \text{$e$ or $(1\,2)$}) \right\}, \\
        \sigma \rho H & = \left\{ ((1, *, *), \text{$(1\,2\,3)$ or $(1\,3)$}) \right\}, \\
      \sigma^2 \rho H & = \left\{ ((*, 1, *), \text{$(1\,3\,2)$ or $(2\,3)$}) \right\}.
    \end{split}
  \end{equation}
\end{proof}

\begin{corollary}\label{cor:trans}
  Let $K$ be a sextic CM field. Then all primitive CM types of $K$ are Galois equivalent.
\end{corollary}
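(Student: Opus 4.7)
The plan is very short: this corollary is essentially a bookkeeping consequence of the three preceding propositions, combined with the classification of possible Galois groups in Theorem \ref{thm:sexticCM}.

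First I would observe that Theorem \ref{thm:sexticCM} enumerates exactly four possibilities for the Galois group $G = \Gal(L \ext \Q)$ of the Galois closure $L$ of a sextic CM field $K$, namely $C_6$, $D_6$, $C_2^3 \rtimes C_3$, and $C_2^3 \rtimes S_3$. So it suffices to verify the claim in each of these four cases.

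Next I would simply invoke the preceding three propositions. Proposition \ref{prop:c6} shows that in the $C_6$ case there is, up to Galois equivalence, exactly one primitive CM type (the other class being imprimitive). Proposition \ref{prop:d6} yields the same conclusion in the $D_6$ case. Finally, Proposition \ref{prop:c23_c3_s3} shows that for $G = C_2^3 \rtimes C_3$ or $G = C_2^3 \rtimes S_3$ there is a unique Galois equivalence class of CM types, and that every CM type is automatically primitive (because $K$ has no proper CM subfield in these cases, as $H$ is contained in no index-$2$ subgroup of $G$). In each of the four cases we therefore end up with a single Galois equivalence class of primitive CM types, which is exactly the statement of the corollary.

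There is no real obstacle to overcome here; the substantive work has already been done in Propositions \ref{prop:c6}, \ref{prop:d6} and \ref{prop:c23_c3_s3}, where the computation of the normalizer $N_G(H)$ from Proposition \ref{prop:norm} was used to reduce Galois equivalence to shifts of subsets of $\Z/6\Z$ (in the cyclic and dihedral cases) or to the explicit action of the $n_i$ on cosets (in the remaining two cases). The corollary is simply the uniform statement that in each possible case, this action turns out to be transitive on the set of primitive CM types.
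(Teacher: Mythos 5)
Your proposal is correct and coincides with the paper's own proof, which likewise observes that Propositions \ref{prop:c6}, \ref{prop:d6}, and \ref{prop:c23_c3_s3} exhaust the four Galois groups listed in Theorem \ref{thm:sexticCM} and each yields a single Galois equivalence class of primitive CM types. No differences to report.
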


\begin{proof}
  We proved this result in Propositions \ref{prop:c6}, \ref{prop:d6}, and \ref{prop:c23_c3_s3}, which cover all individual cases in Theorem \ref{thm:sexticCM}.
\end{proof}

\begin{remark}
  In genus $4$, it is no longer true that all primitive CM types are Galois equivalent. Let $K$ be an octic CM field with Galois group $C_8$, for example $\Q (\zeta_{32} + \zeta_{32}^{15})$. Then (with notation as in the case $C_6$ above) the CM types $\left\{ 0, 1, 2, 3 \right\}$ and $\left\{ 0, 1, 2, 6 \right\}$ are primitive, yet they are not related even when combining the two equivalences.
\end{remark}

In the next section we will consider representatives of the equivalence classes of CM types and their reflex CM types more explicitly.

\subsection{Reflex CM types}\label{sec:reflex_cm_types}

Given a CM type $(K, \Phi)$, there is a \defi{reflex CM type} $(K^r, \Phi^r)$ to $(K, \Phi)$, which is constructed as follows. We can lift $\Phi$ to a CM type $\Phi_L$ on $L$, where elements in $\Phi_L$ are identified by elements in $\Gal(L  | \Q)$. Inverting elements in $\Phi_L$ gives rise to a CM type on $L$ denoted by
\begin{align}
  \Phi_L^{-1} = \{\phi^{-1}: \phi \in \Phi_L\}.
\end{align}
As in \cite[Lemma 2.2]{Lang}, we define the CM field $K^r$ as the fixed field of the group
\begin{equation}
  H^r = \{\sigma\in \Gal(L \ext \Q): \sigma\Phi_L = \Phi_L\},
\end{equation}
and $\Phi^r$ to be the unique (primitive) CM type on $K^r$ that induces $\Phi_L^{-1}$.

To obtain explicit descriptions of reflex CM types when $K$ is sextic, we need one more definition.

\begin{definition}\label{def:dist}
  Let $K$ be a sextic CM field with Galois group $C_6$ or $D_6$. Then we define the \defi{distinguished CM type} $\Psi$ of $K$ to be the CM type $\left\{ \id|_K, \sigma|_K, \sigma^{-1}|_K \right\}$ that corresponds to the set $\left\{ 0, 1, 5 \right\}$ in the notation of Propositions \ref{prop:c6} and \ref{prop:d6}. Note that $\Psi$ is intrinsic to $K$: In other words, it does not depend on the choice of element $\sigma$ of order $6$.
\end{definition}

\begin{proposition}\label{prop:reflex_c6}
  Let $K$ be a sextic CM field with Galois group $C_6$, with generator $\sigma$ as in the proof of Proposition \ref{prop:norm}, and let $\Psi$ be the distinguished CM type of $K$. Then
  \begin{equation}
    \Phi_1 = \Psi = \{\id|_K, \sigma|_K, \sigma^{-1}|_K\}
  \end{equation}
  is the single primitive CM type of $K$ up to equivalence, and
  \begin{equation}
    \Phi_2 = \{\id|_K, \sigma^2|_K, \sigma^{-2}|_K\}
  \end{equation}
  is the single imprimitive CM type of $K$ up to equivalence.

  We have $(K^r, \Phi^r) = (K, \Phi)$ for all primitive CM types $\Phi$, and the reflex CM type $(K^r, \Phi^r)$ of an imprimitive CM type $\Phi$ of $K$ is the restriction of $(K, \Phi)$ to the quadratic CM subfield of $K$.
\end{proposition}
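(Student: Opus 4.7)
The plan is to work inside the Galois closure $L = K$, where embeddings $\tau : K \to L$ correspond to elements of $G = C_6 = \langle \sigma \rangle$ via $\sigma^i \leftrightarrow i \in \Z/6\Z$. Proposition \ref{prop:c6} already showed that up to equivalence the two CM types of $K$ have representatives $\{0, 1, 5\}$ and $\{0, 2, 4\}$, the former primitive and the latter imprimitive. Under the identification above these become precisely $\Phi_1 = \{\id|_K, \sigma|_K, \sigma^{-1}|_K\}$ and $\Phi_2 = \{\id|_K, \sigma^2|_K, \sigma^{-2}|_K\}$, which yields the first assertion.

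For the reflex data I would apply the definitions of $K^r$ and $\Phi^r$ recalled in Section \ref{sec:reflex_cm_types}, specialized to $L = K$. The lifts $\Phi_{i,L}$ coincide with $\Phi_i$ themselves, viewed as subsets of $G$, and both $\{0,1,5\}$ and $\{0,2,4\}$ are stable under negation modulo $6$, so $\Phi_{i,L}^{-1} = \Phi_{i,L}$. It then suffices to compute the translation-stabilizer of each subset in $\Z/6\Z$ to identify $H^r$ and hence $K^r$, and to read off $\Phi^r$ as the unique primitive CM type on $K^r$ inducing $\Phi_{i,L}^{-1}$.

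For $\Phi_1$ the stabilizer of $\{0, 1, 5\}$ is trivial by direct inspection, so $H^r = 1$, $K^r = K$, and $\Phi^r$ is the unique primitive CM type on $K$ inducing $\Phi_1$, namely $\Phi_1$ itself. For $\Phi_2$ the stabilizer is $\{0, 2, 4\} = \langle \sigma^2 \rangle$; thus $K^r$ is the fixed field of $\langle \sigma^2 \rangle$, which is the unique quadratic subfield of $K$. Since $\rho = \sigma^3$ is not in $\langle \sigma^2 \rangle$, complex conjugation acts non-trivially on $K^r$, so $K^r$ is imaginary quadratic and hence CM, with unique (primitive) CM type $\{\id|_{K^r}\}$. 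Because $\sigma^2$ fixes $K^r$ pointwise, this single embedding coincides with the restriction to $K^r$ of every element of $\Phi_2$, identifying $(K^r, \Phi^r)$ with the restriction of $(K, \Phi_2)$ to the quadratic CM subfield, as required.

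The argument involves only bookkeeping and finite stabilizer computations in $\Z/6\Z$, so there is no substantive obstacle. The only thing requiring care is keeping track of the three parallel points of view on embeddings (as maps $K \to L$, as cosets of $H$ in $G$, and as elements of $\Z/6\Z$), and verifying that the distinguished type $\Psi$ of Definition \ref{def:dist} is indeed intrinsic, which follows because any other generator of $C_6$ is either $\sigma$ or $\sigma^{-1} = \sigma^5$ and both give rise to the same set $\{0,1,5\}$.
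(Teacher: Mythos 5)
Your proposal is correct and follows essentially the same route as the paper: the classification of the two types up to equivalence is quoted from Proposition \ref{prop:c6}, and the reflex data is read off by unwinding the definitions of $K^r$ and $\Phi^r$ via stabilizer computations in $\Z/6\Z$ (the paper compresses this to the remark that left and right stabilizers coincide and that the reflex of an imprimitive type equals that of the primitive type inducing it, but the content is identical). The only cosmetic caveat is that, for a general primitive $\Phi$ rather than the representative $\Phi_1$, the equality $(K^r,\Phi^r)=(K,\Phi)$ holds only up to equivalence of CM types — a looseness already present in the paper's own statement.
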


\begin{proof}
  The first part is a consequence of Proposition \ref{prop:norm}: The second follows from the fact that the left and right stabilizers of $\Phi$ coincide, and because the reflex of an imprimitive CM type coincides with that of the primitive CM type that induces it.
\end{proof}

To deal with the case $G = D_6$, we first prove the following general statement.

\begin{proposition}\label{prop:conjtype}
  Let $\Phi, \Psi$ be two CM types of a given field $K$, and suppose that $\Psi = \sigma \Phi$ for $\sigma \in G = \Gal (L \ext \Q)$. Let $(K^r, \Phi^r)$ be the reflex CM type of $(K, \Phi)$. Then the reflex CM type of $(K, \Psi)$ is given by
  \begin{equation}
    (\sigma (K^r), \Phi^r \sigma^{-1}) ,
  \end{equation}
  where
  \begin{equation}
    \Phi^r \sigma^{-1} = \left\{ (\phi \sigma^{-1})|_{\sigma (K^r)} : \phi \in \Phi^r \right\} .
  \end{equation}
\end{proposition}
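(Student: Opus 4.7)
The plan is to prove the proposition by directly unpacking the definition of the reflex CM type, verifying first the claim about the reflex field and then the claim about the CM type itself. The proof is essentially a matter of tracking how the constructions lift, invert, and restrict under the left action by $\sigma$.

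First I would observe that if $\Phi_L \subset G$ denotes the lift of $\Phi$ to a CM type on $L$ (so $\Phi_L$ is the union of $H$-right-cosets corresponding to $\Phi \subset G/H$), then the corresponding lift of $\Psi = \sigma \Phi$ is simply $\Psi_L = \sigma \Phi_L$. This is immediate from the fact that $\Phi \subset G/H$ is mapped to $\sigma \Phi$ under the left action and that the lift is obtained by taking the preimage in $G$.

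Next I would compute the stabilizer subgroup used in the definition of the reflex field:
\begin{equation}
  H^\Psi \; = \; \{\tau \in G : \tau \Psi_L = \Psi_L\} \; = \; \{\tau \in G : \tau \sigma \Phi_L = \sigma \Phi_L\} \; = \; \sigma H^r \sigma^{-1}.
\end{equation}
Since the reflex field is the fixed field of this subgroup, it follows that $K^\Psi = L^{\sigma H^r \sigma^{-1}} = \sigma(L^{H^r}) = \sigma(K^r)$, establishing the first half of the claim.

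Finally, for the reflex CM type itself, I would invert to get $\Psi_L^{-1} = (\sigma \Phi_L)^{-1} = \Phi_L^{-1}\sigma^{-1}$. By definition, $\Psi^r$ is the unique CM type on $K^\Psi = \sigma(K^r)$ whose lift to $L$ is $\Psi_L^{-1}$; concretely, it is obtained by restricting elements of $\Psi_L^{-1}$ to $\sigma(K^r)$. For $\phi_L \in \Phi_L^{-1}$, the restriction $(\phi_L \sigma^{-1})|_{\sigma(K^r)}$ factors as $(\phi_L|_{K^r}) \circ (\sigma^{-1}|_{\sigma(K^r)})$, and as $\phi_L$ ranges over $\Phi_L^{-1}$ the restriction $\phi_L|_{K^r}$ ranges over $\Phi^r$. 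This gives precisely $\Phi^r \sigma^{-1}$ in the sense defined in the statement, completing the proof. The main subtlety is purely notational: keeping straight that $\sigma$ acts on CM types by left composition while the reflex construction involves inversion, so that the conjugation in Step 2 turns into right composition by $\sigma^{-1}$ in Step 3; no genuine obstacle arises.
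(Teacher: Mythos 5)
Your proof is correct and follows essentially the same route as the paper: lift $\Psi = \sigma\Phi$ to $\Psi_L = \sigma\Phi_L$, observe that the left stabilizer conjugates to $\sigma H^r \sigma^{-1}$ (giving the reflex field $\sigma(K^r)$), then invert to get $\Psi_L^{-1} = \Phi_L^{-1}\sigma^{-1}$ and restrict. The only cosmetic difference is that the paper phrases the last step via the coset decomposition $\Phi_L^{-1} = \coprod_{\phi\in\Phi^r}\phi H_\Phi$ and the identity $\phi H_\Phi \sigma^{-1} = \phi\sigma^{-1} H_\Psi$, whereas you phrase it as a factorization of restrictions; these are the same computation.
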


\begin{proof}
  For the extensions of $\Phi$ and $\Psi$ to $L$ we have $\Psi_L = \sigma \Phi_L$. For the corresponding left stabilizers $H_{\Phi}$ and $H_{\Psi}$ we therefore have $H_{\Psi} = \sigma H_{\Phi} \sigma^{-1}$, which already shows that the reflex field of $\Psi$ equals $\sigma (K^r)$. By construction, we have
  \begin{equation}
    \Phi_L^{-1} = \coprod_{\phi \in \Phi^r} \phi H_{\Phi},
  \end{equation}
  so that
  \begin{equation}
    \Psi_L^{-1} = \Phi_L^{-1} \sigma^{-1} = \coprod_{\phi \in \Phi^r} \phi H_{\Phi} \sigma^{-1} = \coprod_{\phi \in \Phi^r} \phi \sigma^{-1} H_{\Psi} .
  \end{equation}
  Restricting to the reflex field of $\Psi$, we obtain the statement of the proposition.
\end{proof}

\begin{proposition}\label{prop:reflex_d6}
  Let $K$ be a sextic CM field with Galois group $D_6$, with dihedral generator $\sigma$ as in the proof of Proposition \ref{prop:norm}, and let $\Psi$ be the distinguished CM type of $K$. Then
  \begin{equation}
    \begin{aligned}
      \Phi_1 & = \Psi &&= \{\id|_K, \sigma|_K, \sigma^{-1}|_K\}, \\
      \Phi_2 & = \sigma \Psi &&=  \{\id|_K, \sigma|_K, \sigma^2|_K\}, \\
      \Phi_3 & = \sigma^{-1} \Psi &&= \{\id|_K, \sigma^{-1}|_K, \sigma^{-2}|_K\}
    \end{aligned}
  \end{equation}
  are representatives of the primitive CM types of $K$ up to equivalence, and
  \begin{equation}
    \Phi_4 = \{\id|_K, \sigma^2|_K, \sigma^{-2}|_K\}
  \end{equation}
  is the single imprimitive CM type of $K$ up to equivalence.

  We have
  \begin{equation}
    \begin{aligned}
      (K_1^r, \Phi_1^r) &= (K, \Psi) &&= \left( K, \left\{ \id|_K, \sigma|_K, \sigma^{-1}|_K \right\} \right), \\
      (K_2^r, \Phi_2^r) &= (\sigma (K), \Psi \sigma^{-1}) &&= \left( \sigma (K), \left\{ \id|_K, \sigma^{-1}|_K, \sigma^{-2}|_K \right\} \right), \\
      (K_3^r, \Phi_3^r) &= (\sigma^{-1} (K), \Psi \sigma) &&= \left(\sigma^{-1} (K), \left\{ \id|_K, \sigma|_K, \sigma^2|_K \right\} \right)
    \end{aligned}
  \end{equation}
  and the reflex $(K_4^r, \Phi_4^r)$ of the imprimitive CM type of $K$ is the restriction of $(K, \Phi_4)$ to the quadratic CM subfield of $K$.
\end{proposition}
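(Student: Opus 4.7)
The plan is to verify the first assertion directly from Proposition \ref{prop:d6}, compute the reflex of $\Phi_1 = \Psi$ by a hands-on calculation, and then obtain the reflexes of $\Phi_2$ and $\Phi_3$ by invoking Proposition \ref{prop:conjtype}. Identifying the coset $\sigma^j H \in G/H$ with $j \in \Z/6\Z$, Definition \ref{def:dist} gives $\Psi = \{0,1,5\}$, so $\sigma\Psi = \{1,2,0\} = \{0,1,2\}$ and $\sigma^{-1}\Psi = \{5,0,4\} = \{0,4,5\}$. Together with the imprimitive type $\Phi_4 = \{0,2,4\}$ induced from the unique quadratic CM subfield, these exhaust the four equivalence classes enumerated in the proof of Proposition \ref{prop:d6}, giving the first assertion.

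For the reflex of $\Phi_1$, I would lift it to $L$: using the dihedral relations $\tau^2 = 1$ and $\tau\sigma\tau^{-1} = \sigma^{-1}$, we have
\begin{equation*}
  \Phi_{1,L} \;=\; H \cup \sigma H \cup \sigma^{-1}H \;=\; \{1,\,\tau,\,\sigma,\,\sigma\tau,\,\sigma^{-1},\,\sigma^{-1}\tau\}.
\end{equation*}
A short calculation using $\tau\sigma^{j} = \sigma^{-j}\tau$ shows that $(\sigma^j\tau)^{-1} = \sigma^j\tau$ for each $j$, so the set $\Phi_{1,L}$ is closed under inversion, i.e., $\Phi_{1,L}^{-1} = \Phi_{1,L}$. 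Checking the possible candidates $\sigma^i$ and $\sigma^i\tau$ in turn, the left stabilizer of $\Phi_{1,L}$ in $G$ is seen to reduce to $H$ itself, so $H^r = H$; hence $K_1^r = K$, and reading off $\Phi_1^r$ from $\Phi_{1,L}^{-1} = H \cup \sigma H \cup \sigma^{-1}H$ yields $\Phi_1^r = \Psi$.

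For $\Phi_2 = \sigma\Psi$ and $\Phi_3 = \sigma^{-1}\Psi$, I would invoke Proposition \ref{prop:conjtype}, which gives $(K_2^r,\Phi_2^r) = (\sigma K, \Psi\sigma^{-1})$ and $(K_3^r,\Phi_3^r) = (\sigma^{-1}K, \Psi\sigma)$; an elementwise computation of $\Psi\sigma^{\mp 1}$ then produces the descriptions stated in the proposition. For the imprimitive type $\Phi_4$, the reflex coincides with the reflex of the primitive CM type on the unique quadratic CM subfield of $K$ that induces $\Phi_4$, exactly as in the corresponding portion of Proposition \ref{prop:reflex_c6}. The only place where care is needed -- and hence the main obstacle, such as it is -- is the bookkeeping between sets of group elements and sets of cosets when computing $\Phi_{1,L}^{-1}$ and its left stabilizer; once $\Phi_{1,L}$ is shown to be closed under inversion, all remaining parts follow formally from Proposition \ref{prop:conjtype} and the standard treatment of imprimitive reflexes.
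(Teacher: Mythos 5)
Your proposal is correct and follows essentially the same route as the paper: lift $\Psi$ to $\Psi_L = \{e,\tau,\sigma,\sigma\tau,\sigma^{-1},\sigma^{-1}\tau\}$, observe it is closed under inversion so that $(K,\Psi)$ is its own reflex, transport to $\Phi_2,\Phi_3$ via Proposition \ref{prop:conjtype}, and treat the imprimitive type as in Proposition \ref{prop:reflex_c6}. Your explicit verification that the left stabilizer of $\Psi_L$ is exactly $H$ is a slightly more careful version of the paper's terser "$\Psi_L^{-1}=\Psi_L$ implies the reflex is $(K,\Psi)$", but the substance is identical.
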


\begin{proof}
  The first part of the proposition is a consequence of Proposition \ref{prop:norm}. Since in our notation from Proposition \ref{prop:norm} we have $\Gal (L \ext K) = \langle \tau \rangle$, the lift of the CM type $\Psi$ to $L$ is given by $\Psi_L = \left\{ e, \tau, \sigma, \sigma \tau, \sigma^{-1}, \sigma^{-1} \tau \right\}$, so that $\Psi_L^{-1} = \Psi_L$. This implies that the reflex of $(K, \Psi)$ is given by $(K, \Psi)$ itself. Proposition \ref{prop:conjtype} then shows the result for the primitive CM types $\Phi_i$. The statement for the imprimitive CM type follows as in Proposition \ref{prop:reflex_c6}.
\end{proof}

The reflex fields for the remaining Galois groups are described in the upcoming propositions.

\begin{proposition}\label{prop:reflex_c23_c3_s3}
  Let $K$ be a sextic CM field with Galois group $C_2^3 \rtimes C_3$, and let $\sigma = ((0, 0, 0), (1\,2\,3))$ and $\rho = ((1, 1, 1), e)$ as in Proposition \ref{prop:norm}. Then
  \begin{equation}
    \begin{aligned}
      \Phi_1 & = \{\id|_K, \sigma|_K, \sigma^2|_K\}, && \Phi_2 = \{\id|_K, \sigma\rho|_K, \sigma^2|_K\},\\\Phi_3 & = \{\id|_K, \sigma|_K, \sigma^2\rho|_K\}, && \Phi_4  = \{\id|_K, \sigma\rho|_K, \sigma^2\rho|_K \}
    \end{aligned}
  \end{equation}
  are representatives of the (primitive) CM types of $K$ up to equivalence.

  The reflex field $K_i^r$ of $(K, \Phi_i)$ is fixed by the group $H^r_i \subset \Gal(L | \Q)$, where
  \begin{equation}
    H_1^r = \langle \sigma\rangle,~H_2^r = \langle \sigma n_0n_1 \rangle,~H_3^r = \langle \sigma n_1n_2 \rangle,~H_4^r = \langle \sigma n_0n_2 \rangle,
  \end{equation}
  with $n_i$ as defined in the proof of Proposition \ref{prop:c23_c3_s3}. We have
  \begin{equation}
    K_2^r = n_1 (K_1^r),~K_3^r = n_2 (K_1^r),~K_4^r = n_1 n_2 (K_1^r).
  \end{equation}
  The reflex CM types $\Phi_i^r$ are given by
  \begin{equation}
    \Phi_1^r = \Phi_2^r = \Phi_3^r = \Phi_4^r = \{\id|_{K_i^r}, n_1|_{K_i^r}, n_2|_{K_i^r}, n_1n_2|_{K_i^r}\}.
  \end{equation}
\end{proposition}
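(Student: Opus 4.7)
My plan has three stages. First, I handle the claim that $\Phi_1, \dots, \Phi_4$ are representatives of the equivalence classes of CM types: by Proposition \ref{prop:c23_c3_s3} there are exactly four such classes, and a direct check from \eqref{eq:cosets} will show that the four listed sets are pairwise not related by the equivalence, which by Proposition \ref{prop:norm} amounts to complex conjugation. All four are automatically primitive because $K$ contains no quadratic CM subfield.

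Second, I compute the reflex data for $\Phi_1$ explicitly. The lift $\Phi_{1,L} = H \cup \sigma H \cup \sigma^2 H$ is visibly stable under left multiplication by $\sigma$, giving $\langle\sigma\rangle \subset H_1^r$. Using the observation that any stabilizer element $g$ must satisfy $g \in gH \subset \Phi_{1,L}$, a short case analysis in the semidirect product shows that there are no further stabilizing elements, whence $H_1^r = \langle\sigma\rangle$ and $K_1^r$ is octic. For $\Phi_1^r$ I plan to compute $\Phi_{1,L}^{-1} = H \cdot \langle\sigma\rangle$, observe that $H \cap \langle\sigma\rangle = \{e\}$, and decompose into left $H_1^r$-cosets as $\coprod_{h \in H} h \langle\sigma\rangle$; the four representatives $e, n_1, n_2, n_1 n_2$ then yield the stated formula for $\Phi_1^r$.

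Third, for $i \in \{2,3,4\}$ I reduce to the case already treated. Coset calculations of the same flavor as in the proof of Proposition \ref{prop:c23_c3_s3} (see in particular \eqref{eq:neqs}) will give $\Phi_2 = n_1 \Phi_1$, $\Phi_3 = n_2 \Phi_1$, and $\Phi_4 = n_1 n_2 \Phi_1$. Proposition \ref{prop:conjtype} then immediately produces $K_i^r = g_i(K_1^r)$ and $H_i^r = g_i \langle\sigma\rangle g_i^{-1}$, and conjugating $\sigma$ by each $g_i$ in the semidirect product (using that $\sigma$ permutes the three coordinates of $C_2^3$ cyclically) will identify the new generators as the elements $\sigma n_0 n_1$, $\sigma n_1 n_2$, $\sigma n_0 n_2$ listed in the statement. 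The formula for $\Phi_i^r$ comes from Proposition \ref{prop:conjtype} combined with the fact that right multiplication by $g_i^{-1}$ merely permutes the four coset representatives $\{e, n_1, n_2, n_1 n_2\}$ of $H_1^r$, and that $H \cap H_i^r = \{e\}$ ensures these representatives give distinct cosets of $H_i^r$. The only real obstacle throughout is the bookkeeping in the semidirect product and the need to track left versus right cosets carefully; there is no conceptual difficulty.
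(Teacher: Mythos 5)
Your proposal is correct and follows essentially the same route as the paper's proof: reduce to $\Phi_1$ via $\Phi_2 = n_1\Phi_1$, $\Phi_3 = n_2\Phi_1$, $\Phi_4 = n_1 n_2\Phi_1$ and Proposition \ref{prop:conjtype}, compute the left stabilizer of $\Phi_{1,L} = H \cup \sigma H \cup \sigma^2 H$ to be $\langle\sigma\rangle$, and decompose $\Phi_{1,L}^{-1} = H\langle\sigma\rangle$ into cosets of $H_1^r$ with representatives $\{e, n_1, n_2, n_1 n_2\}$. The only cosmetic difference is that you argue the stabilizer and the distinctness of cosets slightly more explicitly (via $g \in \Phi_{1,L}$ and $H \cap \langle\sigma\rangle = \{e\}$) than the paper does.
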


\begin{proof}
  The first part of the proposition is a consequence of Proposition \ref{prop:norm}. Let $H = \left\{ e, n_1, n_2, n_1 n_2 \right\}$ be the subgroup of the Galois group that corresponds to $K$. First consider the CM type $\Phi_1$. The induced CM type $\Phi_{1, L}$ on $L$ is the union
  \begin{equation}
    H \cup \sigma H \cup \sigma^2 H = \left\{ ((*, *, 0), e) \right\} \cup \left\{ ((0, *, *), (1\,2\,3)) \right\} \cup \left\{ ((*, 0, *), (1\,3\,2)) \right\} .
  \end{equation}
  From this explicit presentation, one obtains that the left stabilizer $H_1^r$ of $\Phi_{1, L}$ is generated by $\sigma$. Using equalities similar to \eqref{eq:neqs} shows that $\Phi_2 = n_1 \Phi_1$, $\Phi_3 = n_2 \Phi_1$, and $\Phi_4 = n_1 n_2 \Phi_1$. The corresponding stabilizers are therefore generated by $n_1 \sigma n_1^{-1} = \sigma n_0 n_1$, $n_2 \sigma n_2^{-1} = \sigma n_1 n_2$, and $n_1 n_2 \sigma (n_1 n_2)^{-1} = \sigma n_0 n_2$.

  The embeddings in the reflex CM type of $\Phi_1$ are in bijective correspondence with the elements of
  \begin{equation}
    \Phi_{1, L}^{-1} = H^{-1} \cup H^{-1} \sigma^{-1} \cup H^{-1} \sigma^{-2} = H \cup H \sigma^{-1} \cup H \sigma^{-2}
  \end{equation}
  up to the action of the right stabilizer $H_1^r = \langle \sigma \rangle$. These are therefore represented by the elements of $H$, which yields the second statement of the proposition for $\Phi_1$. Since $\Phi_2 = n_1 \Phi_1$, $\Phi_3 = n_2 \Phi_1$, and $\Phi_4 = n_1 n_2 \Phi_1$, representatives of the corresponding inverse CM types up to the right action of the corresponding stabilizers are given by $n_1 H, n_2 H$, and $n_1 n_2 H$. These are all equal to $H$, and therefore we obtain the second statement for all CM types $\Phi_i$.
\end{proof}

\begin{proposition}\label{prop:reflex_c23_s3}
  Let $K$ be a sextic CM field with Galois group $C_2^3 \rtimes S_3$ and take $\sigma = ((0, 0, 0), (1\,2\,3))$, $\tau = ((0, 0, 0), (1\,2))$, and $\rho = ((1, 1, 1), e)$. Then
  \begin{equation}
    \begin{aligned}
      \Phi_1 & = \{\id|_K, \sigma|_K, \sigma^2|_K\}, && \Phi_2 = \{\id|_K, \sigma\rho|_K, \sigma^2|_K\},\\\Phi_3 & = \{\id|_K, \sigma|_K, \sigma^2\rho|_K\}, && \Phi_4  = \{\id|_K, \sigma\rho|_K, \sigma^2\rho|_K \}
    \end{aligned}
  \end{equation}
  are representatives of the (primitive) CM types of $K$ up to equivalence.

  The reflex field $K_i^r$ of $(K, \Phi_i)$ is fixed by the group $H^r_i \subset \Gal(L | \Q)$, where
  \begin{equation}
  H_1^r = \langle \sigma, \tau \rangle,~H_2^r = \langle \sigma n_0n_1, \tau n_1 n_2 \rangle,~H_3^r = \langle \sigma n_1n_2, \tau n_1 n_2 \rangle,~H_4^r = \langle \sigma n_0 n_2, \tau \rangle,
  \end{equation}
  with $n_i$ as defined in the proof of Proposition \ref{prop:c23_c3_s3}. We have
  \begin{equation}
    K_2^r = n_1 (K_1^r),~K_3^r = n_2 (K_1^r),~K_4^r = n_1 n_2 (K_1^r) .
  \end{equation}
  The reflex CM types $\Phi_i^r$ are given by
  \begin{equation}
    \Phi_1^r = \Phi_2^r = \Phi_3^r = \Phi_4^r = \{\id|_{K_i^r}, n_1|_{K_i^r}, n_2|_{K_i^r}, n_1n_2|_{K_i^r}\}.
  \end{equation}
\end{proposition}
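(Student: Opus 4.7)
The plan is to prove Proposition \ref{prop:reflex_c23_s3} by mirroring the strategy used for Proposition \ref{prop:reflex_c23_c3_s3}, adapted to the larger Galois group $G = C_2^3 \rtimes S_3$. The first claim, that the listed $\Phi_i$ are representatives of the primitive CM types up to equivalence, is already established by Proposition \ref{prop:c23_c3_s3}, so the work is concentrated in identifying the reflex field stabilizers $H_i^r$ and showing that a common set of coset representatives $\{\id, n_1, n_2, n_1 n_2\}$ yields the reflex CM type in each of the four cases.

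First I would handle $\Phi_1$ concretely. Lifting to $L$ using the explicit coset descriptions from the proof of Proposition \ref{prop:c23_c3_s3}, one has
\begin{equation*}
\Phi_{1,L} \;=\; H \,\cup\, \sigma H \,\cup\, \sigma^2 H,
\end{equation*}
whose left stabilizer must be determined by direct computation. Clearly $\sigma$ acts by cyclically permuting the three cosets, and a short calculation with $\sigma \tau = (1\,3)$, $\tau \sigma = (2\,3)$ together with the explicit description of $\sigma H$ and $\sigma^2 H$ shows that $\tau$ swaps $\sigma H$ and $\sigma^2 H$ while fixing $H$. Hence $\langle \sigma, \tau\rangle \subseteq H_1^r$, and this group has order~$6$. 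For the reverse inclusion I would check that no element with nontrivial $C_2^3$-component stabilizes the union, using the fact that, for example, left multiplication by $n_1$ sends $\sigma H$ to $\sigma \rho H \notin \Phi_{1,L}$; together with $|H_1^r| \cdot |\Phi^r_1| = |G|/2$ this forces $H_1^r = \langle\sigma,\tau\rangle$.

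Next I would treat $\Phi_2, \Phi_3, \Phi_4$ by invoking the identities $\Phi_2 = n_1 \Phi_1$, $\Phi_3 = n_2 \Phi_1$, $\Phi_4 = n_1 n_2 \Phi_1$, which carry over verbatim from the proof of Proposition \ref{prop:c23_c3_s3} because $\rho$ and the $n_i$ commute with $\tau \in H$. Proposition \ref{prop:conjtype} then immediately gives $H_i^r = n\, H_1^r\, n^{-1}$ for the appropriate $n$, and a short semidirect product calculation, using the convention for the action of $S_3$ on $C_2^3$, verifies the identities $n_1 \sigma n_1^{-1} = \sigma n_0 n_1$ and $n_1 \tau n_1^{-1} = \tau n_1 n_2$ (and analogous ones), yielding the four explicit presentations for $H_i^r$ in the statement, along with $K_i^r = n_? (K_1^r)$.

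Finally, for the reflex CM types I would compute $\Phi_{1,L}^{-1} = H \cup H\sigma^{-1} \cup H\sigma^{-2} = H \cup H\sigma \cup H\sigma^2$ (using $\sigma^3 = e$), which unfolds as the full set $\{((a,b,0), s) : (a,b) \in C_2^2,\ s \in S_3\}$ of $24$ elements. Right multiplication by $H_1^r = \langle\sigma,\tau\rangle = \{((0,0,0),s) : s \in S_3\}$ only changes the $S_3$-component, so each orbit has size~$6$ and $\{\id, n_1, n_2, n_1 n_2\}$ is a transversal. The same transversal works for the remaining $\Phi_i^r$ via Proposition \ref{prop:conjtype}: the shifted representatives $\{\phi n^{-1} : \phi \in \Phi_1^r\}$ coincide as a set with $\{\id, n_1, n_2, n_1 n_2\}$ because $n_1, n_2, n_1n_2$ are involutions that commute with each other. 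The main obstacle I anticipate is purely bookkeeping, namely keeping the action of $S_3$ on $C_2^3$ consistent across all conjugations; once that convention is fixed, each identity reduces to a routine computation in the semidirect product.
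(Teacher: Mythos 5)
Your proposal is correct and follows essentially the same route as the paper, which proves this proposition by declaring it "similar to the previous proposition" — namely, lifting $\Phi_1$ to $L$ via the explicit cosets, computing the left stabilizer, obtaining $\Phi_2,\Phi_3,\Phi_4$ as $n_1\Phi_1$, $n_2\Phi_1$, $n_1n_2\Phi_1$ and conjugating via Proposition \ref{prop:conjtype}, then reading off $\Phi_{1,L}^{-1}$ modulo the right action of $H_1^r$. Your explicit verification that $\tau$ fixes $H$ and swaps $\sigma H$ with $\sigma^2 H$, and that no nontrivial element of $C_2^3$ stabilizes $\Phi_{1,L}$, correctly supplies the details the paper leaves implicit.
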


\begin{proof}
  The proof is similar to that of the previous proposition.
\end{proof}

\subsection{Construction of abelian varieties with CM}\label{sec:geometric_relevance}

In this section we review the construction of abelian varieties with CM type and explore the geometric relevance of the Galois equivalence defined in \ref{def:galeq}. We develop the tools that we need for the upcoming section on abelian varieties, and prove the statement in Remark \ref{rem:galeq}, as well as a lower bound on the degree of the field of moduli in terms of the CM type in Corollary \ref{cor:fom}.

Let $K$ be a CM field of degree $2 g$. As in \cite{shimura-taniyama, spallek, wamelen}, we consider pairs $(\a, \xi)$, where $\a$ is a fractional $\Z_K$-ideal and where $\xi \in K$ is a totally imaginary element such that $(\xi) = (\a \overline{\a} \Df_{K | \Q})^{-1}$. Such a pair $(\a, \xi)$ uniquely determines a CM type $\Phi$ that consists of those embeddings $\phi$ of $K$ into $\C$ for which the imaginary part of $\phi (\xi)$ is positive. In what follows, we will consider the pairs $(\aa, \xi)$ and the corresponding triples $(\Phi, \aa, \xi)$ interchangeably. Either gives rise to a ppav $A (\a, \xi) = (\C^g / \Phi (\a), E)$ over $\CC$ whose endomorphism ring is isomorphic to $\Z_K$. The polarization $E$ that comes with $A (\a, \xi)$ is induced by the trace pairing on $K$.

Conversely, given a ppav $A$ over $\CC$ whose endomorphism ring is isomorphic to $\Z_K$, we can consider the representation $\rho$ of $K$ on the tangent space of $A$ at $0$ after choosing some isomorphism $\End (A) \otimes \Q \simeq K$. Then $\rho$ is diagonalizable, which yields a set of embeddings $\Phi$ of $K$ into $\C$ such that
\begin{equation}
  \rho \cong \phi_1 \oplus \cdots \oplus \phi_g.
\end{equation}
The set $\Phi$ is then a primitive CM type of $K$ because we insisted that $\Z_K$ be the full endomorphism ring of $A$. (If $\Phi$ were non-primitive, then $A$ would be a power of an elliptic curve up to isogeny, so that the endomorphism ring would even contain zero divisors.) Note that the representation of $K$ depends on the chosen isomorphism, which means that given the ppav $A$, only the \emph{equivalence class} of the CM type $\Phi$ is well-defined.

The following is shown in \cite[Theorem 4.2]{streng-algorithm}:
\begin{proposition}\label{prop:isom_with_CM}
  Let $K$ be a CM field of degree $2 g$, and let $\Phi$ be a fixed \emph{primitive} CM type of $K$. Then the association
  \begin{equation}
    (\a, \xi) \mapsto A (\a, \xi) = (\C^g / \Phi (\a), E)
  \end{equation}
  defined above yields a bijection between the set of pairs $(\a, \xi)$ with associated CM type $\Phi$ up to the equivalence
  \begin{align}\label{def:equiv_inside_cmtype_equiv}
    \text{$(\a, \xi) \sim (\a', \xi')$ if $(\a', \xi') = (\gamma \a, (\gamma \overline{\gamma})^{-1} \xi)$ for $\gamma \in K^*$}
  \end{align}
  and the set of isomorphism classes of principally polarized abelian varieties that admit CM by $\Z_K$ of type $\Phi$ up to equivalence.
\end{proposition}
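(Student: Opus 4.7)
The plan is to verify the three standard ingredients of such a moduli-theoretic bijection: well-definedness on equivalence classes, surjectivity, and injectivity. The statement is classical (and cited as \cite[Theorem 4.2]{streng-algorithm}), but a self-contained sketch runs as follows.

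First I would verify well-definedness. Given $(\aa,\xi)$ with associated CM type $\Phi$, the map $\Phi: K \otimes_\Q \R \to \C^g$ realizes $\Phi(\aa)$ as a full lattice, and the Riemann form
\begin{equation*}
  E\bigl(\Phi(x),\Phi(y)\bigr) = \Tr_{K|\Q}(\xi x \overline{y})
\end{equation*}
is $\Z$-valued on $\Phi(\aa) \times \Phi(\aa)$ precisely because $\xi \aa \overline{\aa} \subset \Df_{K|\Q}^{-1}$, and is unimodular because the containment is in fact an equality, i.e.\ $(\xi) = (\aa \overline{\aa} \Df_{K|\Q})^{-1}$. Positivity of $E$ on the tangent space holds because the imaginary parts $\Im \phi(\xi)$ are positive for all $\phi \in \Phi$, which is exactly how $\Phi$ was associated to $(\aa,\xi)$. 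If we replace $(\aa,\xi)$ by $(\gamma \aa,(\gamma\overline{\gamma})^{-1}\xi)$, then multiplication by $\Phi(\gamma)$ on $\C^g$ induces an isomorphism $\C^g/\Phi(\aa) \to \C^g/\Phi(\gamma \aa)$, and a short calculation with the trace form shows that this isomorphism identifies the two polarizations.

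Next I would show surjectivity. Given a ppav $(A,E)$ over $\C$ with CM by $\Z_K$ of type $\Phi$, write $A = \C^g/\Lambda$ and observe that $\Lambda$ inherits a $\Z_K$-module structure from the action of $\Z_K \subset \End(A)$. Tensoring with $\Q$ gives a $K$-module structure on $\Lambda \otimes \Q$, which is $2g$-dimensional over $\Q$, hence of $K$-dimension $1$; crucially, \emph{primitivity} of $\Phi$ is what prevents $\Lambda \otimes \Q$ from being reducible as a $K$-module. Standard structure theory for torsion-free modules over Dedekind domains then identifies $\Lambda$ with some fractional ideal $\aa \subset K$, and the Hodge decomposition matches the complex-analytic uniformization with $\C^g/\Phi(\aa)$. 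The Riemann form of $E$ is $K$-compatible, and Shimura--Taniyama's analysis of such forms shows it must be of the shape $\Tr_{K|\Q}(\xi x \overline{y})$ for a unique totally imaginary $\xi \in K$; principality translates to $(\xi) = (\aa \overline{\aa} \Df_{K|\Q})^{-1}$, and positivity forces the CM type associated to $\xi$ in the sense above to be $\Phi$ itself.

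Finally I would prove injectivity. Any isomorphism $A(\aa,\xi) \to A(\aa',\xi')$ of ppav commutes with the $\Z_K$-actions by rigidity (both CM structures come from $\Z_K = \End(A)$), hence lifts to a $K$-linear map $\C^g \to \C^g$, which must be multiplication by an element $\Phi(\gamma)$ for some $\gamma \in K^*$ with $\gamma \aa = \aa'$. Compatibility with the polarizations gives the relation $\xi' = (\gamma\overline{\gamma})^{-1}\xi$, i.e.\ $(\aa,\xi) \sim (\aa',\xi')$. The main subtlety is the surjectivity step, and specifically the appeal to primitivity of $\Phi$ to guarantee that $\Lambda$ is free of rank one over $\Z_K$ up to ideal class; without primitivity, $A$ is isogenous to a proper power of a lower-dimensional CM abelian variety and the rank-one conclusion fails.
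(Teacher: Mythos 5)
Your outline is correct, and it is essentially the standard Shimura--Taniyama argument. Note that the paper itself gives no proof of this proposition: it is quoted verbatim from \cite[Theorem 4.2]{streng-algorithm}, so there is no in-paper argument to compare against beyond that citation. Your three steps (integrality and unimodularity of the trace form from $(\xi) = (\aa \aabar \Df_{K|\Q})^{-1}$, positivity from the signs of $\Im \phi(\xi)$, identification of the period lattice with a fractional ideal, and rigidity for injectivity) are exactly the ingredients of the cited result.

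One localized misstatement is worth correcting: primitivity of $\Phi$ is \emph{not} what makes $\Lambda \otimes \Q$ a one-dimensional $K$-vector space. Since $K$ is a field acting on the $2g$-dimensional $\Q$-vector space $\Lambda \otimes \Q$ and $[K : \Q] = 2g$, the $K$-dimension is $1$ by a dimension count whether or not $\Phi$ is primitive; in particular your closing claim that ``the rank-one conclusion fails'' without primitivity is false. Where primitivity genuinely enters is in guaranteeing that $A$ is simple, hence that $\End(A)$ is exactly $\Z_K$ rather than a larger ring with zero divisors (as the paper itself remarks just before stating the proposition). That is what legitimizes the rigidity appeal in your injectivity step --- every isomorphism of the ppavs is automatically $K$-linear, so it is multiplication by some $\gamma \in K^*$ --- and what makes the CM type of $A$ well defined up to equivalence, so that the target set of the bijection is meaningful. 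In other words, the subtlety you locate in surjectivity actually lives in injectivity and in the set-up of the correspondence; the surjectivity step goes through for any CM type.
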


\begin{definition}\label{def:auteq}
  We say that two pairs $(\aa, \xi)$ and $(\aa', \xi')$ are \emph{equivalent} if there exists an element $\alpha \in \Aut (K)$ such that $(\alpha^{-1} (\aa), \alpha (\xi))$ and $(\aa', \xi')$ are equivalent in the sense of Proposition \ref{prop:isom_with_CM}.
\end{definition}

\begin{remark}
  Note that this new notion of equivalence from Definition \ref{def:auteq} makes it possible for two pairs $(\aa, \xi)$ with different associated CM type to be equivalent.
\end{remark}

The following is shown in \cite[Proposition 4.11]{streng-algorithm}:
\begin{proposition}
  Two pairs $(\aa, \xi)$ and $(\aa', \xi')$ are equivalent if and only if $A (\aa, \xi)$ and $A (\aa', \xi')$ are isomorphic as principally polarized abelian varieties.
\end{proposition}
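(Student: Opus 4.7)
The plan is to prove the two implications of the biconditional separately, each by reducing to Proposition \ref{prop:isom_with_CM}.

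For the forward direction, suppose $(\aa, \xi) \sim (\aa', \xi')$ in the sense of Definition \ref{def:auteq}, witnessed by some $\alpha \in \Aut(K)$. Proposition \ref{prop:isom_with_CM} applied to the pairs $(\alpha^{-1}(\aa), \alpha(\xi))$ and $(\aa', \xi')$, which share a common CM type by construction, immediately yields $A(\alpha^{-1}(\aa), \alpha(\xi)) \cong A(\aa', \xi')$ as ppav. It therefore suffices to exhibit a natural isomorphism of ppav $A(\aa, \xi) \cong A(\alpha^{-1}(\aa), \alpha(\xi))$, even though the associated CM types $\Phi$ (the one determined by $\xi$) and $\Phi \alpha^{-1}$ (the one determined by $\alpha(\xi)$) differ in general. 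I would construct this isomorphism from the $\R$-linear automorphism of $K \otimes_\Q \R$ induced by $\alpha$, and then verify three compatibilities: that the lattice $\aa$ is carried to $\alpha^{-1}(\aa)$; that the complex structures attached to $\Phi$ and $\Phi \alpha^{-1}$ are intertwined, which amounts precisely to the observation about the action of $\Aut(K)$ on CM types already exploited in Section \ref{sec:reflex_cm_types}; and that the Riemann forms arising from $\xi$ and $\alpha(\xi)$ match up, which follows from the invariance of $\Tr_{K/\Q}$ under $\Aut(K)$ together with the fact that $\alpha$ commutes with complex conjugation on $K$.

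For the converse, given a ppav isomorphism $\phi \colon A(\aa, \xi) \to A(\aa', \xi')$, functoriality on rational endomorphism algebras yields an induced isomorphism between two copies of $K$, i.e., an automorphism $\alpha \in \Aut(K)$ measuring the failure of $\phi$ to be $\Z_K$-equivariant. The canonical isomorphism furnished by the forward direction realizes exactly this same twist on endomorphisms, so precomposing $\phi$ with its inverse produces a $\Z_K$-equivariant ppav isomorphism $A(\alpha^{-1}(\aa), \alpha(\xi)) \to A(\aa', \xi')$. Proposition \ref{prop:isom_with_CM} then supplies a $\gamma \in K^*$ witnessing the equivalence in the sense of that proposition, and by Definition \ref{def:auteq} this proves $(\aa, \xi) \sim (\aa', \xi')$.

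The main obstacle lies in the forward direction: the conceptual picture --- that applying an automorphism of $K$ to the data $(\aa, \xi)$ amounts geometrically to twisting the $\Z_K$-action on the ppav by that same automorphism, which is invisible when forgetting the endomorphism structure --- is clean, but turning it into a verified isomorphism requires careful bookkeeping to confirm that the $\R$-linear map induced by $\alpha$ sends the complex structure associated to $\Phi$ to the one associated to $\Phi \alpha^{-1}$ and that the polarizations defined by $\xi$ and $\alpha(\xi)$ are pulled back to one another. Once this verification is in place, the rest is formal and the converse follows from the forward construction together with Proposition \ref{prop:isom_with_CM}.
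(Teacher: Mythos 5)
Your overall architecture is the standard one (it is essentially how this statement is proved in Streng's article, which is all the paper itself offers by way of proof): realize the $\Aut(K)$-twist by an explicit isomorphism in the forward direction, and in the converse read off $\alpha$ from the induced map on endomorphism algebras and reduce to Proposition \ref{prop:isom_with_CM}. The gap is that the verification you defer in the forward direction is precisely the step that does not go through for the pair $(\alpha^{-1}(\aa), \alpha(\xi))$ appearing in Definition \ref{def:auteq}. Write $\Phi$ for the CM type of $\xi$; the CM type of $\alpha(\xi)$ is $\Phi\alpha^{-1}$. The map $x \mapsto \alpha^{-1}(x)$ on $K \otimes_\Q \R$ does carry $\aa$ to $\alpha^{-1}(\aa)$, but it intertwines the complex structure of $\Phi$ with that of $\Phi\alpha$ (not $\Phi\alpha^{-1}$), and it pulls the form $\Tr_{K|\Q}(\alpha(\xi)\, u\, \overline{v})$ back to $\Tr_{K|\Q}(\alpha^{2}(\xi)\, a\, \overline{b})$ rather than to $\Tr_{K|\Q}(\xi a \overline{b})$; the map $x \mapsto \alpha(x)$ fixes the latter two compatibilities but sends $\aa$ to $\alpha(\aa)$. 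So your three compatibilities pull in opposite directions and cannot hold simultaneously. Indeed, $(\alpha^{-1}(\aa), \alpha(\xi))$ in general fails even the normalization $(\alpha(\xi)) = (\alpha^{-1}(\aa)\,\overline{\alpha^{-1}(\aa)}\,\Df_{K|\Q})^{-1}$, since that would force $\alpha^{2}(\aa\overline{\aa}) = \aa\overline{\aa}$. (Note also that your sketch is internally inconsistent on this point: the automorphism ``induced by $\alpha$'' carries $\aa$ to $\alpha(\aa)$, not to $\alpha^{-1}(\aa)$.)

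What this reveals is that $\alpha$ must be applied with the same sign to both components: the equivalence should pair $\alpha^{-1}(\aa)$ with $\alpha^{-1}(\xi)$ (equivalently, $\alpha(\aa)$ with $\alpha(\xi)$), as in Streng's Proposition 4.11, and Definition \ref{def:auteq} as printed contains a sign slip. With that correction your plan succeeds with essentially no work: one checks directly that $(\Phi\alpha^{-1})(\alpha(a)) = \Phi(a)$ and that $\Tr_{K|\Q}(\alpha(\xi)\alpha(a)\overline{\alpha(b)}) = \Tr_{K|\Q}(\xi a \overline{b})$, using that $\alpha$ commutes with $\rho$ and that $\Df_{K|\Q}$ is Galois stable, so that $A(\alpha(\aa),\alpha(\xi))$ is \emph{equal} to $A(\aa,\xi)$ as a ppav; the remainder of your argument, including the converse via functoriality of the endomorphism algebra, is then correct. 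As submitted, however, the asserted isomorphism $A(\aa,\xi) \cong A(\alpha^{-1}(\aa),\alpha(\xi))$ is false in general, so the proof does not close without either flagging and repairing the definition or carrying out the deferred check, which would have exposed the inconsistency.
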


Recall that we have fixed an embedding of the Galois closure $L$ into $\C$.

\begin{proposition}\label{prop:tang}
  Let $A$ be a principally polarized abelian variety over $\C$ with CM by $K$ of type $\Phi$ up to equivalence, and let $\sigma \in \Aut (\C)$. Denoting the restriction of $\sigma$ to $L$ by $\sigma$ again, we have that the conjugate principally polarized abelian variety $\sigma A$ has CM by $K$ of type $\sigma \Phi$ up to equivalence.
\end{proposition}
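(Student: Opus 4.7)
My plan is to read off the CM type of $\sigma A$ directly from the $K$-representation on its tangent space at $0$. First, I would fix an isomorphism $\iota : \End(A) \otimes \Q \xrightarrow{\sim} K$ realizing the CM structure on $A$, so that by definition $\Phi$ (viewed as a set of embeddings $K \into \C$) consists of the characters appearing in the $K$-representation on $T_0 A$ via $\iota^{-1}$. Conjugation by $\sigma \in \Aut(\C)$ yields a ring isomorphism $\sigma_* : \End(A) \xrightarrow{\sim} \End(\sigma A)$, $\alpha \mapsto \sigma \alpha$, and the natural identification to equip the conjugate with is $\iota' := \iota \circ \sigma_*^{-1}$. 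Replacing $\iota$ by $\iota \circ \alpha$ for $\alpha \in \Aut(K)$ would change $\iota'$ by the same $\alpha$, so this choice is canonical up to the $\Aut(K)$-equivalence appearing in the statement. The polarization on $\sigma A$ is the conjugate of that on $A$ and is automatically compatible with $\iota'$, so the polarized structure requires no separate argument.

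The core step is a semilinearity computation. Base change along $\sigma : \Spec \C \to \Spec \C$ produces a $\sigma$-semilinear isomorphism $\delta : T_0 A \to T_0(\sigma A)$ that intertwines the action of $\iota^{-1}(x)$ on the source with that of $\iota'^{-1}(x) = \sigma_* (\iota^{-1}(x))$ on the target, for every $x \in K$. For a $\phi$-eigenvector $v \in T_0 A$ with $\iota^{-1}(x) \cdot v = \phi(x) \, v$, semilinearity forces
\begin{equation*}
\iota'^{-1}(x) \cdot \delta(v) \; = \; \delta(\phi(x) \, v) \; = \; \sigma(\phi(x)) \, \delta(v),
\end{equation*}
so that the CM type of $(\sigma A, \iota')$, as a set of embeddings into $\C$, equals $\{ \sigma \circ \phi : \phi \in \Phi \}$.

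To finish I would translate this back into the framework of Definition \ref{def:CMType}. Since $L$ is Galois over $\Q$ and $L \subset \C$, the automorphism $\sigma$ restricts to some $\sigma|_L \in G$, and on values $\phi(x) \in L$ the action of $\sigma$ on $\C$ agrees with that of $\sigma|_L$ on $L$. Under the bijection $G/H \to \Hom(K, L)$ recorded in the remark preceding Proposition \ref{prop:norm}, the CM type $\{ \sigma|_L \circ \phi : \phi \in \Phi \}$ is precisely the left translate $\sigma \Phi$ from Definition \ref{def:galeq}, which yields the proposition.

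The main obstacle I anticipate is not conceptual but bookkeeping: one must pin down the match between three \emph{a priori} distinct notions, namely the transported $K$-action on $T_0(\sigma A)$, CM types valued in $\C$, and CM types valued in $L$ viewed as sections of $G/H \to \langle \rho \rangle \backslash G/H$. Once these identifications are rigorously in place, the semilinear calculation above delivers the result with no further work.
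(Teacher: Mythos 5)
Your proof is correct and follows essentially the same route as the paper: the $\sigma$-semilinear isomorphism $\delta$ on tangent spaces is exactly the paper's observation that the tangent representation of a conjugated endomorphism with respect to a conjugated basis is the entrywise conjugate matrix, whence the eigenvalues (and hence the CM type) are transported by $\sigma$. Your additional care about fixing $\iota' = \iota \circ \sigma_*^{-1}$ and its well-definedness up to $\Aut(K)$ only makes explicit what the paper leaves implicit.
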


\begin{proof}
This follows from the fact that the formation of the tangent space is functorial. Alternatively, if $T \in M_g (\C)$ is the tangent representation of a given endomorphism $\alpha$ with respect to a basis of differentials $B$ of $A$, then $\sigma T$ is a representation of an endomorphism of $\sigma A$ with respect to $\sigma B$. This means that if after our choice of embedding $K \into \End (A) \otimes \Q$ we can write the representation $\rho$ of $K$ on the tangent space of $A$ as a direct sum
  \begin{equation}
    \rho \cong \phi_1 \oplus \cdots \oplus \phi_g,
  \end{equation}
  we also obtain a representation $\sigma \rho$ of $K$ on the tangent space of $\sigma A$ given by
  \begin{equation}
    \sigma \rho \cong \sigma \phi_1 \oplus \cdots \oplus \sigma \phi_g,
  \end{equation}
  which proves the proposition.
\end{proof}

\begin{corollary}\label{cor:fom}
  Let $A$ be a principally polarized abelian variety over $\C$ with \emph{primitive} CM by $K$ up to equivalence, and suppose that $\Gal (L \ext \Q)$ is isomorphic to $D_6$ (resp.\ $C_2^3 \rtimes C_3$ or $C_2^3 \rtimes S_3$). Then the degree of the field of moduli of $A$ over $\Q$ is a multiple of $3$ (resp.\ $4$).
\end{corollary}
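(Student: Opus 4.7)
The plan is to reduce the statement about the degree of the field of moduli to a combinatorial statement about the $\Gal(L \ext \Q)$-action on primitive CM types. Write $M$ for the field of moduli of $A$ over $\Q$; by definition $[M : \Q]$ equals the cardinality of the $\Aut(\C)$-orbit of the isomorphism class $[A]$ of the ppav $A$. By Proposition \ref{prop:tang}, the assignment sending an isomorphism class of a ppav with primitive CM by $\Z_K$ to its associated equivalence class of primitive CM types is $\Aut(\C)$-equivariant, where the $\Aut(\C)$-action on the target factors through $G = \Gal(L \ext \Q)$ as defined in Definition \ref{def:galeq}. Proposition \ref{prop:isom_with_CM} shows that this assignment is surjective onto the set of equivalence classes of primitive CM types.

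Restricting this surjection to the $\Aut(\C)$-orbit of $[A]$, I obtain a surjective $G$-equivariant map to the $G$-orbit of $[\Phi]$ inside the set of equivalence classes of primitive CM types of $K$. Both source and target are transitive $G$-sets, and any surjective equivariant map between transitive $G$-sets has all fibers of equal cardinality. Hence the cardinality of the $G$-orbit of $[\Phi]$ divides the cardinality of the $\Aut(\C)$-orbit of $[A]$, which is $[M:\Q]$.

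It therefore remains to compute this orbit size. By Corollary \ref{cor:trans}, all primitive CM types of $K$ are Galois equivalent, which unwinds to the statement that $G$ acts transitively on the set of equivalence classes of primitive CM types of $K$. By Proposition \ref{prop:d6} this set has cardinality $3$ when $G \cong D_6$, and by Proposition \ref{prop:c23_c3_s3} it has cardinality $4$ when $G \cong C_2^3 \rtimes C_3$ or $G \cong C_2^3 \rtimes S_3$. Combining with the divisibility established in the previous paragraph yields that $3 \mid [M : \Q]$ in the first case and $4 \mid [M : \Q]$ in the other two.

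There is no real obstacle: the argument is essentially a packaging of earlier structural results on CM types. The only care required is in the bookkeeping between the two equivalence relations (the $\Aut(K)$-equivalence used throughout and the $G$-action defining Galois equivalence), and in correctly reading Corollary \ref{cor:trans} as a transitivity statement for the $G$-action on $\Aut(K)$-equivalence classes of primitive CM types, so that the orbit count matches the count of $\Aut(K)$-equivalence classes of primitive CM types listed in Propositions \ref{prop:d6} and \ref{prop:c23_c3_s3}.
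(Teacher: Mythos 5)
Your argument is correct and is essentially the paper's own proof: both rest on the $\Aut(\C)$-equivariance of the CM-type assignment from Proposition \ref{prop:tang}, the transitivity from Corollary \ref{cor:trans}, and the counts of equivalence classes in Propositions \ref{prop:d6} and \ref{prop:c23_c3_s3}. The paper phrases the divisibility via stabilizers (the subgroup fixing the field of moduli must fix the equivalence class of the CM type) while you phrase it via orbit sizes, but by orbit--stabilizer these are the same argument.
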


\begin{proof}
  This follows because the subgroup of $\Aut (\C)$ that fixes the field of moduli has to fix the primitive CM type up to equivalence of $A$ by Proposition \ref{prop:tang}, combined with the transitivity of the Galois action proved in Corollary \ref{cor:trans}.
\end{proof}

\section{The Shimura class group and the Galois action}\label{sec:shimura}

\subsection{Background}

We recall some fundamental notions. Throughout, we let $K$ be a CM field.

\begin{definition}
  The \defi{Shimura class group} $\Cc_K$ of $K$ is the abelian group of equivalence classes
  \begin{equation}\label{def:shimura_class_group}
    \Cc_K = \left \{ (\b, \beta): \b~\text{is fractional $\Z_K$-ideal},~\beta\in K^*_0~\text{totally positive with}~\bb \bbbar = \beta\Z_K \right\}/\sim
  \end{equation}
  where $(\b, \beta) \sim (\b', \beta')$ if $(\b', \beta') = (x\b, x\xbar \beta)$ for some $x\in K^*$.
\end{definition}

As was shown in \cite[\S 14.5]{Shimura}, the structure of $\Cc_K$ is given by the sequence
\begin{equation}\label{eq:exseq}
  1 \xrightarrow{~} (\Z^*_{K_0})^+/N_{K|K_0}(\Z_K^*) \xrightarrow{u~\longmapsto~(\Z_K, u)} \Cc_K \xrightarrow{(\b, \beta)~\longmapsto~\b} \Cl(K) \xrightarrow{N_{K|K_0}} \Cl^+(K_0),
\end{equation}
where $(\Z^*_{K_0})^+ \subset \Z^*_{K_0}$ is the group of totally positive units, and $\Cl^+(K_0)$ is the narrow class group of $K_0$.

\begin{remark}
  As is discussed in \cite{BGL}, the final map in \eqref{eq:exseq} is surjective if a finite prime ramifies in the extension $K \ext K_0$. This turns out to be the case for all fields considered in this article, as follows by checking that the relative different $\Dc_{K | K_0} = \left\{ \alpha - \rho(\alpha) : \alpha \in \Z_K \right\}$ is a proper ideal (also see the proof of \cite[Proposition 4.4]{streng-algorithm}). Under this hypothesis, denoting $h (K) = |\Cl (K)|$ and $h^+ (K_0) = |\Cl^+ (K_0)|$, we have
  \begin{equation}\label{eq:strengsize}
    \left|\Cc_K\right| = \frac{h(K)}{h^+(K_0)}\cdot  \left|(\Z^*_{K_0})^+/N_{K|K_0}(\Z_K^*)\right|.
  \end{equation}
\end{remark}

Let $N$ be the norm map on ideals of $K^r$. Combining this with the reflex type norm
\begin{align}\label{def:reflex_type_norm_map}
  \begin{split}
    N_{\Phi^r}: \Cl(K^r) & \rightarrow \Cl(K) \\
    [\a] & \mapsto \left[ \Z_K \cap \prod_{\phi \in \Phi^r} \phi(\a)\Z_L \right],
  \end{split}
\end{align}
we obtain a map from the regular class group of $K^r$ to the Shimura class group of $K$, namely
\begin{equation}\label{def:n_map}
  \begin{split}
    \Nc_{\Phi^r}: \Cl(K^r) & \rightarrow \Cc_K \\
    [\a] & \mapsto ( N_{\Phi^r}(\a), N(\a) ) .
  \end{split}
\end{equation}

\subsection{Torsors and moduli spaces}

We denote by $\Mc_{\Z_K}$ the set of isomorphism classes of ppavs with \emph{primitive} CM by $\Z_K$. Given a primitive CM type $\Phi$, we denote by $\Mc_{\Z_K}(\Phi)$ the set of isomorphism classes of ppavs that admit CM of type $\Phi$.

\begin{proposition}\label{prop:moduli_space_decomposition}
  There is a disjoint union $\Mc_{\Z_K} = \bigcup_{\Phi} \Mc_{\Z_K}(\Phi)$, where $\Phi$ runs over a set of representatives of the equivalence classes of primitive CM types of $K$.
\end{proposition}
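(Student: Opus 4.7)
The plan is to prove two things: (1) every class $[A] \in \Mc_{\Z_K}$ lies in $\Mc_{\Z_K}(\Phi)$ for some primitive CM type $\Phi$, and (2) the sets $\Mc_{\Z_K}(\Phi)$ and $\Mc_{\Z_K}(\Phi')$ are disjoint whenever $\Phi$ and $\Phi'$ are inequivalent. Since Proposition \ref{prop:isom_with_CM} is phrased for a fixed CM type $\Phi$, the content here is essentially a bookkeeping statement about how the choice of abstract isomorphism $\End(A) \otimes \Q \simeq K$ enters the definition of a CM type associated to $A$.

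For (1), I would start from an arbitrary $A \in \Mc_{\Z_K}$ and fix any identification $\iota : \End(A) \otimes \Q \xrightarrow{\sim} K$. Pulling back the tangent representation at $0$ along $\iota$ gives an action of $K$ on a $g$-dimensional $\C$-vector space, which is diagonalizable because $K$ is \'etale. The resulting multiset of characters yields a CM type $\Phi$ of $K$ (it is a CM type, not just a set of embeddings, because the Rosati involution on $\End(A) \otimes \Q$ coming from the principal polarization corresponds to the complex conjugation $\rho$ of $K$, and the tangent space together with its complex conjugate exhausts $\Hom(K, \C)$). Then $A \in \Mc_{\Z_K}(\Phi)$ by construction. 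The fact that $\Phi$ must be primitive is exactly the parenthetical remark in the discussion preceding Proposition \ref{prop:isom_with_CM}: if $\Phi$ were induced from a strict CM subfield $K' \subsetneq K$, then $A$ would be isogenous to a power of an abelian variety with CM by $K'$, forcing $\End(A) \otimes \Q$ to contain a matrix algebra of size at least $2$ and in particular to contain zero divisors, contradicting $\End(A) \simeq \Z_K$.

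For (2), suppose that the class of $A$ lies in $\Mc_{\Z_K}(\Phi) \cap \Mc_{\Z_K}(\Phi')$. Then there exist two identifications $\iota, \iota' : \End(A) \otimes \Q \xrightarrow{\sim} K$ realising the CM types $\Phi$ and $\Phi'$ respectively. The composition $\alpha = \iota' \circ \iota^{-1}$ is a $\Q$-algebra automorphism of $K$, i.e.\ an element of $\Aut(K)$, and a direct diagonalisation argument shows that changing $\iota$ to $\iota' = \alpha \circ \iota$ transforms the tangent characters by right composition with $\alpha$, so that $\Phi' = \Phi \alpha$. Hence $\Phi$ and $\Phi'$ are equivalent in the sense recalled after Definition \ref{def:CMType}. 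Combining (1) and (2) yields the disjoint union stated in the proposition.

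The only step that requires a moment's thought is verifying that the tangent characters transform exactly as claimed under a change of identification $\iota$, and that the Rosati involution corresponds to $\rho$ (so that we actually obtain a CM type rather than an arbitrary multiset); both are standard and already implicit in the discussion preceding Proposition \ref{prop:isom_with_CM}. The primitivity obstruction in (1) is the only genuinely nontrivial input, but it has been invoked in the paper already, so there is no real difficulty and the proposition is essentially a formal consequence of the setup established in Section \ref{sec:geometric_relevance}.
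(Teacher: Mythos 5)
Your proposal is correct and follows the same route as the paper, which simply cites the discussion at the start of Section \ref{sec:geometric_relevance}: the CM type of a ppav with primitive CM by $\Z_K$ is well defined up to equivalence (existence and primitivity via the tangent representation, uniqueness up to $\Aut(K)$ from the choice of identification $\End(A)\otimes\Q\simeq K$). You have merely spelled out the details that the paper leaves implicit, and all of them check out.
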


\begin{proof}
  This follows from the fact that given a ppav $(A, E)$ with primitive CM by $K$, the CM type of $(A, E)$ is uniquely determined up to equivalence, as reviewed at the beginning of Section \ref{sec:geometric_relevance}.
\end{proof}

Similar to $\Cc_K$, we define the group
\begin{equation}
  \Dc_K = \left \{ (\b, \beta): \b~\text{is fractional $\Z_K$-ideal},~\beta\in K^*~\text{with}~\bb \bbbar = \beta\Z_K \right\}/\sim
\end{equation}
where $(\b, \beta) \sim (\b', \beta')$ if $(\b', \beta') = (x\b, x\xbar \beta)$ for $x\in K^*$. Then there is a tautological injection of groups
\begin{equation}
  \Cc_K \into \Dc_K .
\end{equation}
The pairs $(\bb, \beta)$ representing the elements $[(\bb, \beta)]$ of $\Dc_K$ act on the pairs $(\aa, \xi)$ considered in Section \ref{sec:geometric_relevance} via
\begin{equation}
  (\bb, \beta) (\aa, \xi) = (\bb^{-1} \aa, \beta \xi).
\end{equation}
This action is compatible with the equivalence from Proposition \ref{prop:isom_with_CM} in the sense that if two pairs $(\bb, \beta)$ and $(\bb', \beta')$ are equivalent, then so are $(\bb, \beta) (\aa, \xi)$ and $(\bb', \beta') (\aa, \xi)$.

\begin{proposition}\label{prop:Shimura_ck_action}
  Let $c = [(\bb, \beta)] \in \Dc_K$, and let $(\bb, \beta)(\aa, \xi) = (\aa', \xi')$. Let $\Phi'$ be the CM type of $(\aa', \xi')$. Then $\Phi' = \Phi$ if and only if $c \in \Cc_K$.
\end{proposition}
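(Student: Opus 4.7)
The plan is to reduce the question to an elementary sign analysis, exploiting that the CM type attached to a pair $(\aa, \xi)$ depends only on the signs of the imaginary parts of $\phi(\xi)$ as $\phi$ ranges over $\Hom(K, \C)$. The only thing to understand is therefore how those signs change when $\xi$ is replaced by $\xi' = \beta\xi$.

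First I would record some invariants of a representative $(\bb, \beta)$ of $c \in \Dc_K$. The defining relation $\bb \overline{\bb} = \beta \Z_K$ forces $(\beta) = (\overline{\beta})$, so $\beta/\overline{\beta}$ is a unit in $\Z_K$, while the equivalence relation defining $\Dc_K$ only modifies $\beta$ by the totally positive factor $x \overline{x} = N_{K|K_0}(x) \in K_0^+$. In particular the conditions ``$\beta \in K_0$'' and ``$\beta$ totally positive'' depend only on the class $c$. For $(\aa', \xi') = (\bb^{-1}\aa, \beta\xi)$ to again be a pair of the kind considered in Section \ref{sec:geometric_relevance} one needs $\beta\xi$ to be totally imaginary, which (since $\overline{\xi} = -\xi$) forces $\overline{\beta} = \beta$, \ie $\beta \in K_0$. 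Under this assumption, for every embedding $\phi : K \into \C$, the value $\phi(\beta)$ is real, and
\begin{equation*}
  \operatorname{Im}(\phi(\beta\xi)) \;=\; \phi(\beta) \cdot \operatorname{Im}(\phi(\xi)).
\end{equation*}
Comparing this with the defining condition $\Phi = \{\phi : \operatorname{Im}(\phi(\xi)) > 0\}$ shows at once that $\Phi' = \Phi$ if and only if $\phi(\beta) > 0$ for every $\phi$, that is, if and only if $\beta$ is totally positive in $K_0$. Since this is precisely the condition carving out $\Cc_K$ inside $\Dc_K$, both directions of the proposition will follow simultaneously.

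The main (mild) subtlety will be the representative-independent formulation. If $\beta$ fails to lie in $K_0$ for some representative of $c$, no other representative works either, since the equivalence scales $\beta$ only by elements of $K_0^+$; hence $(\aa', \xi')$ is not a pair of the required type and such $c$ automatically fall outside $\Cc_K$, matching the ``only if'' direction vacuously. With these bookkeeping points taken care of, the substance of the argument is the one-line computation displayed above.
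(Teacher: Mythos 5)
Your proof is correct and follows essentially the same route as the paper: the paper's entire argument is the observation that the imaginary parts of $\xi$ and $\xi' = \beta\xi$ have the same signs at all embeddings if and only if $\beta$ is totally positive, which is exactly your sign computation $\operatorname{Im}(\phi(\beta\xi)) = \phi(\beta)\operatorname{Im}(\phi(\xi))$. Your additional bookkeeping (representative-independence of the conditions on $\beta$, and the observation that $\xi'$ totally imaginary forces $\beta \in K_0$) is left implicit in the paper but is a correct and welcome clarification rather than a different approach.
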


\begin{proof}
  This follows because the imaginary parts of $\xi$ and $\xi'$ have positive signs at the same complex embeddings if and only if $\beta$ is totally positive.
\end{proof}

Let $c = [(\bb, \beta)] \in \Cc_K$ with $\bb$ integral. Proposition \ref{prop:Shimura_ck_action} shows that the pairs $(\aa, \xi)$ and $c (\aa, \xi)$ have the same CM type $\Phi$, and the inclusion $\Phi (\aa) \subset \Phi (\bb^{-1} \aa)$ yields an isogeny
\begin{equation}
  A (\aa, \xi) \to A (c (\aa, \xi)).
\end{equation}

\begin{proposition}\label{prop:torsor}
  Let $K$ be a sextic CM field, and let $\Phi$ be a fixed primitive CM type. Then the set $\Mc_{\Z_K}(\Phi)$ is a torsor under the action of $\Cc_K$.
\end{proposition}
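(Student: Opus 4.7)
The plan is to transport the question from ppavs to the pairs $(\aa,\xi)$ discussed in Section \ref{sec:geometric_relevance}, where the action of $\Cc_K$ admits a direct description, and to verify the three conditions (well-definedness, transitivity, and freeness) in turn.

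First, by Proposition \ref{prop:isom_with_CM}, since $\Phi$ is primitive, the set $\Mc_{\Z_K}(\Phi)$ is in bijection with the set of equivalence classes of pairs $(\aa,\xi)$ with associated CM type $\Phi$, under the relation \eqref{def:equiv_inside_cmtype_equiv}. Under this identification, the action of $[(\bb,\beta)] \in \Cc_K$ is given by $(\bb,\beta)(\aa,\xi) = (\bb^{-1}\aa, \beta\xi)$. A routine check shows that this descends to equivalence classes on both sides, and Proposition \ref{prop:Shimura_ck_action} guarantees that $\Cc_K$ preserves the CM type, so that the action is indeed well-defined on $\Mc_{\Z_K}(\Phi)$. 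Non-emptiness of $\Mc_{\Z_K}(\Phi)$ is standard: starting from any fractional ideal $\aa$, a totally imaginary $\xi$ generating $(\aa\aabar \Df_{K|\Q})^{-1}$ and inducing the CM type $\Phi$ exists because the strict class group of $K_0$ admits the required surjection from that of $K$.

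For transitivity, given two pairs $(\aa,\xi)$ and $(\aa',\xi')$ both inducing the CM type $\Phi$, I would set $\bb = \aa (\aa')^{-1}$ and $\beta = \xi'/\xi$. Then $\beta \in K^*$ is fixed by complex conjugation (since both $\xi$ and $\xi'$ are totally imaginary), hence lies in $K_0^*$. Moreover, for each embedding $\phi \in \Phi$, the elements $\phi(\xi)$ and $\phi(\xi')$ both lie in the upper half plane by the definition of $\Phi$, so $\phi(\beta) > 0$; as the restrictions of such $\phi$ to $K_0$ exhaust the real embeddings of $K_0$, the element $\beta$ is totally positive. The identity $\bb\bbbar = \beta\Z_K$ then follows by dividing the two identities $\aa\aabar = (\xi\Df_{K|\Q})^{-1}$ and $\aa'\aabar' = (\xi'\Df_{K|\Q})^{-1}$. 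Thus $[(\bb,\beta)] \in \Cc_K$ and by construction $(\bb,\beta)(\aa,\xi) = (\aa',\xi')$, giving transitivity.

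For freeness, suppose $c = [(\bb,\beta)] \in \Cc_K$ fixes the class of $(\aa,\xi)$. Then there exists $\gamma \in K^*$ with $\bb^{-1}\aa = \gamma\aa$ and $\beta\xi = (\gamma\gammabar)^{-1}\xi$, so that $\bb = \gamma^{-1}\Z_K$ and $\beta = (\gamma\gammabar)^{-1}$. Taking $x = \gamma^{-1}$ in the equivalence relation defining $\Cc_K$ gives $(\bb,\beta) \sim (\Z_K,1)$, so $c = 1$ in $\Cc_K$.

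I expect the main subtlety to lie in carefully tracking the two distinct equivalence relations (one on pairs inside a fixed CM type, one defining $\Cc_K$) and verifying that $\beta$ constructed in the transitivity step is indeed totally positive rather than merely totally real, as this is exactly the point where the hypothesis that both pairs induce the \emph{same} CM type is used; the remaining verifications are essentially formal once the bookkeeping is set up.
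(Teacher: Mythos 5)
Your verification of the torsor axioms is correct: the identification with pairs $(\aa,\xi)$, the computation showing that $\bb=\aa(\aa')^{-1}$, $\beta=\xi'/\xi$ gives a totally positive $\beta\in K_0^*$ with $\bb\bbbar=\beta\Z_K$, and the freeness argument are all sound. But this is exactly the content of Proposition \ref{prop:isom_with_CM}, which the paper simply cites in one line ($\Mc_{\Z_K}(\Phi)$ is a torsor \emph{if it is non-empty}). The entire substance of the paper's proof is the non-emptiness of $\Mc_{\Z_K}(\Phi)$, which you dispose of in a single sentence, and that sentence does not hold up.

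The gap is this: the standard construction (e.g.\ \cite[Proposition 4.4]{streng-algorithm}) produces \emph{some} pair $(\aa_0,\xi_0)$, hence some CM type $\Phi_0$, but you have no control over which one. Since the $\Cc_K$-action preserves the CM type (Proposition \ref{prop:Shimura_ck_action}), you cannot move from $\Phi_0$ to your chosen primitive $\Phi$ within $\Cc_K$; you must change the signs of the imaginary parts of $\xi_0$, i.e.\ act by elements of $\Dc_K\setminus\Cc_K$. Whether enough sign patterns are realizable is governed by the image of $N_{K|K_0}:\Cl(K)\to\Cl^+(K_0)$ intersected with the ``sign subgroup'' $S$, and the paper only proves this image has index at most $2$ — not that it is everything. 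This is just enough to show that \emph{some} CM type other than $\Phi_0$ and $\overline{\Phi}_0$ is realized, hence (by the classification of sextic CM types) some \emph{primitive} one; the paper then reaches the given $\Phi$ not through class groups at all but through Galois conjugation, via Proposition \ref{prop:tang} and the transitivity of the Galois action on primitive CM types (Corollary \ref{cor:trans}). Your appeal to ``the strict class group of $K_0$ admits the required surjection from that of $K$'' is both unproved and insufficient: even granting surjectivity of the norm map, one still needs the step from ``some primitive CM type occurs'' to ``the prescribed $\Phi$ occurs,'' and that step genuinely requires the Galois-equivalence results of Section \ref{sec:cm}.
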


\begin{proof}
  Proposition \ref{prop:isom_with_CM} shows that $\Mc_{\Z_K} (\Phi)$ is a torsor if it is non-empty. Since $\Mc_{\Z_K} = \bigcup_{\Phi} \Mc_{\Z_K}(\Phi)$, where $\Phi$ runs over the primitive CM types of $K$ up to equivalence, and since the Galois action on the components is transitive by Corollary \ref{cor:trans}, it suffices to prove that $\Mc_{\Z_K} \neq \emptyset$ in order to show that that one, and hence all, of the $\Mc_{\Z_K} (\Phi)$ with $\Phi$ primitive are torsors under $\Cc_K$.

  For this, let $(\aa_0, \xi_0)$ be the pair explicitly constructed in \cite[Proposition 4.4]{streng-algorithm}, which has CM by $\Z_K$. If the CM type $\Phi_0$ associated to $(\aa_0, \xi_0)$ is primitive, then we are done. So suppose that $\Phi_0$ is imprimitive. Then we consider the narrow Hilbert class field $H_0^+$ of $K_0$ and the Hilbert class field $H$ of $K$. The final map in \eqref{eq:exseq} fits into a commutative diagram
  \begin{equation}\label{eq:nmres}
    \begin{tikzcd}
      \Cl (K) \ar[d, "\sim"] \ar[r, "N_{K|K_0}"]& \Cl^+ (K_0) \ar[d, "\sim"] \\
      \Gal (H \ext K) \ar[r, "\text{res}"] & \Gal (H_0^+ \ext K_0)
    \end{tikzcd}
  \end{equation}
  where the map on the bottom is restriction. The inclusion $K H_0^+ \subset H$ yields a surjective restriction map $\Gal (H \ext K) \to \Gal (K H_0^+ \ext K)$. By Galois theory, the latter group is isomorphic to $\Gal (H_0^+ \ext K \cap H_0^+)$. Since $K \cap H_0^+$ is an at most quadratic extension of $K_0$, we see that the image $N$ of $N_{K|K_0}$ is of index at most $2$ in $\Cl^+ (K_0)$.

  Let $\Idf (K_0)$ be the group of fractional $\Z_{K_0}$-ideals. Fix an enumeration of the real embeddings of $K_0$, and given an element $\alpha \in K_0^*$, let $\sgn_i (\alpha)$ denote the sign of $\alpha$ under the $i$th embedding. Then under the map $[ \aa ] \mapsto [(\aa, (1, 1, 1))]$ the narrow class group $\Cl^+ (K_0)$ becomes isomorphic to the group $\Idf (K_0) \times \langle -1 \rangle^3$ modulo the equivalence relation
  \begin{center}
    $(\aa, (s_1, s_2, s_3)) \sim (\aa', (s'_1, s'_2, s'_3))$ \\
    if there exists $\alpha \in K_0^*$ such that $\aa' = \alpha \aa$ and $s'_i = \sgn_i (\alpha) s_i$.
  \end{center}
  The exact sequence
  \begin{equation}
    0 \to \langle -1 \rangle^3 \to \Idf (K_0) \times \langle -1 \rangle^3 \to \Idf (K_0) \to 0
  \end{equation}
  induces another such sequence
  \begin{equation}\label{eq:clexseq}
    0 \to S \to \Cl^+ (K_0) \to \Cl (K_0) \to 0
  \end{equation}
  where $S$ is the quotient of $\langle -1 \rangle^3$ by the image of $\Z_{K_0}^*$ under the sign maps.

  As before, consider the image $N$ of $\Cl (K)$ in $\Cl^+ (K_0)$ under the norm map. We have shown above that $N$ is of index at most $2$ in $\Cl^+ (K_0)$. Moreover, by \cite[Theorem 10.1]{washington}, the norm map $\Cl (K) \to \Cl (K_0)$ is surjective. So if we identify $S$ with its image in $\Cl^+ (K_0)$, then \ref{eq:clexseq} shows that $N \cap S$ is of index at most $2$ in $S$. This means that there exists a $\Z_K$-ideal $\bb$ such that $\bb \bbbar$ is principal, and moreover generated by an element $\beta \in K_0$ whose signs at the infinite places of $K_0$ do not all coincide.

  Now let $(\aa, \xi) = (\bb, \beta) (\aa_0, \xi_0)$. Then because of the sign property of $\beta$, the CM type $\Phi$ corresponding to $(\aa, \xi)$ differs from both $\Phi_0$ and $\overline{\Phi}_0$. Our classification of the CM types of sextic CM fields in Section \ref{sec:cm} then shows that $\Phi$ is primitive. Therefore $\Mc_{\Z_K}$ is non-empty, since it contains the ppav that corresponds to $(\aa, \xi)$.
\end{proof}

\subsection{Representatives up to Galois conjugation}

Let us fix a primitive CM type $\Phi$ of $K$. Then the set $\Mc_K(\Phi)$ is stable under the action of $G^r = \Gal (\Qbar \ext K^r)$, and the orbits of $\Mc_{\Z_K}(\Phi)$ under the action of the group $G^r$ correspond to the elements of the quotient $\Cc_K / \im (\Nc_{\Phi^r})$. More precisely, we have the following result.

\begin{theorem}[Main Theorem of complex multiplication]\label{thm:srecip}
  Let $A (\aa, \xi)\in \Mc_{\Z_K}(\Phi)$, and $\sigma \in G^r$. Denote by $\bb\in \Cl(K^r)$ the ideal whose class corresponds to $\sigma$ under the Artin map. Then
  \begin{equation}\label{eq:srecip}
    \sigma (A (\aa, \xi)) \cong A (\Nc_{\Phi^r} (\b) (\aa, \xi)) .
  \end{equation}
\end{theorem}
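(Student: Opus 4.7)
The plan is to reduce the statement to the classical Main Theorem of complex multiplication of Shimura--Taniyama (see \cite[Ch.~III, Thm.~6.1 and Prop.~6.2]{Lang} or \cite[\S 15]{Shimura}) and then to unwind that theorem into the language of pairs $(\aa, \xi)$ from Section \ref{sec:geometric_relevance}.

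First I would recall the classical formulation: for a ppav $A \cong \C^g/\Phi(\aa)$ with CM of type $(\Phi, \Z_K)$, and for $\sigma \in G^r$ corresponding under the Artin map to an integral ideal class $[\bb] \in \Cl(K^r)$, the conjugate $\sigma A$ is isomorphic, as an abelian variety equipped with its $\Z_K$-action, to $\C^g/\Phi(N_{\Phi^r}(\bb)^{-1}\aa)$; the isomorphism is induced by the obvious analytic isogeny from $A$, whose kernel is $N_{\Phi^r}(\bb)^{-1}\aa/\aa$. That the CM type of $\sigma A$ is again $\Phi$ is consistent with Proposition \ref{prop:tang}, since any $\sigma \in G^r$ fixes $\Phi$ by the very construction of the reflex field.

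Second I would track the polarization. On $A(\aa, \xi)$ the principal polarization is the Riemann form $E_\xi(x, y) = \Tr_{K\ext\Q}(\xi x \rho(y))$. The map $\Nc_{\Phi^r}$ in \eqref{def:n_map} is built precisely so that $N_{\Phi^r}(\bb)\overline{N_{\Phi^r}(\bb)} = N(\bb)\Z_K$ with $N(\bb) \in K_0^*$ totally positive. A direct computation then shows that the polarization on the new lattice $\Phi(N_{\Phi^r}(\bb)^{-1}\aa)$ inherited from $E_\xi$ is exactly $E_{N(\bb)\xi}$, which yields
\begin{equation*}
  \sigma A(\aa, \xi) \cong A\bigl(N_{\Phi^r}(\bb)^{-1}\aa,\, N(\bb)\xi\bigr) = A\bigl(\Nc_{\Phi^r}([\bb])(\aa, \xi)\bigr).
\end{equation*}
Finally I would verify that the equivalence class of the right-hand side is independent of the chosen integral representative $\bb$ of its class: replacing $\bb$ by $\gamma\bb$ with $\gamma \in (K^r)^*$ modifies the pair by an element of the equivalence in Proposition \ref{prop:isom_with_CM}, so the class in $\Cc_K$ is unchanged.

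The main obstacle is the polarization bookkeeping in the second step. The underlying lattice assertion in the classical theorem is a relatively transparent consequence of the action of the reflex type norm on the torsion of $A$, but checking that the transported Riemann form equals $E_{N(\bb)\xi}$ requires invoking the defining identity $N_{\Phi^r}(\bb)\overline{N_{\Phi^r}(\bb)} = N(\bb)\Z_K$ and the total positivity of $N(\bb)$ at exactly the right place, combined with a careful reading of the sign conventions that distinguish a CM type from its complex conjugate.
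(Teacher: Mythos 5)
The paper does not actually prove Theorem \ref{thm:srecip}: it is quoted as the classical Main Theorem of complex multiplication of Shimura--Taniyama and then used as a black box, so there is no in-paper argument to compare yours against. Read as a derivation of the paper's formulation from the classical one, your outline is essentially right in its skeleton: the unpolarized part ($\sigma A \cong \C^g/\Phi(N_{\Phi^r}(\bb)^{-1}\aa)$ compatibly with the $\Z_K$-action), the observation that $\sigma$ fixes $\Phi$ because it fixes $K^r$, the translation of $\Nc_{\Phi^r}(\bb)(\aa,\xi)$ into $(N_{\Phi^r}(\bb)^{-1}\aa,\,N(\bb)\xi)$ under the paper's action $(\bb,\beta)(\aa,\xi)=(\bb^{-1}\aa,\beta\xi)$, and the check that replacing $\bb$ by $\gamma\bb$ changes the pair only by the equivalence of Proposition \ref{prop:isom_with_CM} via $\delta = N_{\Phi^r}(\gamma)^{-1}$.

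The one step I would push back on is your treatment of the polarization. The identification of the conjugated polarization with $E_{N(\bb)\xi}$ is itself part of the classical Main Theorem (the polarization clause you cite from Lang); it is not a ``direct computation'' from the lattice isomorphism together with the identity $N_{\Phi^r}(\bb)\overline{N_{\Phi^r}(\bb)} = N(\bb)\Z_K$ and total positivity. Those facts only tell you that $\sigma E$ is the Riemann form attached to some totally imaginary $\xi'$ with $(\xi') = (N(\bb)\xi)$ and with the signs dictated by $\Phi$, which determines $\xi'$ only up to a totally positive unit $u \in \Z_{K_0}^*$. By the injectivity of the first map in the exact sequence \eqref{eq:exseq}, the class of $(\Z_K, u)$ in $\Cc_K$ can be nontrivial when $u$ is not a relative norm from $\Z_K^*$, so this ambiguity is exactly of the size that the theorem is meant to rule out. (Note also that a polarization does not ``transport'' along the isogeny $\C^g/\Phi(\aa) \to \C^g/\Phi(N_{\Phi^r}(\bb)^{-1}\aa)$ in any naive sense, since $E_\xi$ need not be integral on the larger lattice.) The fix is simply to take the polarized form of the classical Main Theorem as your input, after which your second step is pure bookkeeping; as written, with only the unpolarized statement as input, the argument has a genuine gap at precisely the point you identify as the main obstacle.
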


We will use this reciprocity law to prove Theorem \ref{thm:boundexp}, which shows that given $A (\aa_0, \xi_0)$ in $\Mc_{\Z_K} (\Phi)$, we can obtain any other isomorphism class $A (\aa, \xi)$ in $\Mc_{\Z_K} (\Phi)$ as a Galois conjugate of an abelian variety $A (\bb \aa_0, \eta_0)$, where $\bb$ runs through a fixed (small) set of representatives of $G_2 / e G_2$, where $G_2 = \ker (N_{K | K_0}) \subset \Cl (K)$ and where $e | 4$. The usefulness of this result stems from the fact that $G_2 / e G_2$ is usually far smaller than $G_2$ itself. We start with a general observation.

\begin{proposition}\label{prop:indep}
  Let $\Phi$ and $\Psi$ be Galois equivalent CM types. Then there is an equality of double reflex maps
  \begin{equation}
    N_{\Phi^r} \circ N_\Phi = N_{\Psi^r} \circ N_\Psi .
  \end{equation}
\end{proposition}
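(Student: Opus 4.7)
The plan is to prove the stronger equality of fractional $\Z_K$-ideals $N_{\Phi^r}(N_\Phi(\bb)) = N_{\Psi^r}(N_\Psi(\bb))$ for every $\Z_K$-ideal $\bb$, by working after extension of scalars to $\Z_L$; the claimed equality of maps $\Cl(K) \to \Cl(K)$ then follows. For any CM type $(F, \Theta)$ with $F \subset L$, let $\Theta_L \subset G = \Gal(L \ext \Q)$ denote the union of the $\Gal(L \ext F)$-cosets in $G$ corresponding to $\Theta$. Write $H = \Gal(L \ext K)$ and $H^r = \Gal(L \ext K^r)$.

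The first step is to unwind definition \eqref{def:reflex_type_norm_map} in order to show that
\begin{equation}
  N_{\Phi^r}(N_\Phi(\bb)) \, \Z_L \;=\; \prod_{\psi \in \Phi^r} \prod_{\phi \in \Phi} \widetilde{\psi}\phi(\bb) \, \Z_L,
\end{equation}
for any choice of lifts $\widetilde{\psi} \in G$ of the embeddings $\psi \in \Phi^r$. Here one uses that $H$ fixes $\bb\Z_L$ and that $H^r$ stabilises $\prod_{\phi \in \Phi}\phi(\bb)\Z_L$ (this is the defining property of $K^r$, and amounts to $H^r$ permuting $\Phi$ as a set of $H$-cosets), which both guarantees well-definedness and allows one to identify $(\Z_K \cap J)\Z_L$ with $J$ at the relevant moments. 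Raising both sides to the power $n := [L:K][L:K^r] = |H||H^r|$ and inserting the trivial $h$-action of $h \in H$ on $\bb\Z_L$ and of $h' \in H^r$ on $\prod_{\phi \in \Phi}\phi(\bb)\Z_L$ then gives the symmetric identity
\begin{equation}\label{eq:dr-sym}
  \bigl(N_{\Phi^r}(N_\Phi(\bb)) \, \Z_L\bigr)^{n} \;=\; \prod_{\mu \in \Phi_L^{-1}\Phi_L} \mu(\bb) \, \Z_L,
\end{equation}
where $\Phi_L^{-1}\Phi_L$ is the multiset of products $\tau\sigma$ with $\tau \in \Phi_L^{-1}$ and $\sigma \in \Phi_L$.

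The point of \eqref{eq:dr-sym} is that the right-hand side depends only on the multiset $\Phi_L^{-1}\Phi_L$. Now $\Psi = \sigma \Phi$ yields $\Psi_L = \sigma\Phi_L$ and $\Psi_L^{-1} = \Phi_L^{-1}\sigma^{-1}$, so the obvious cancellation
\begin{equation}
  \Psi_L^{-1} \Psi_L \;=\; \Phi_L^{-1} \sigma^{-1}\sigma \Phi_L \;=\; \Phi_L^{-1} \Phi_L
\end{equation}
shows the multisets coincide. Further, $[L : K]$ is unchanged and $[L : K^r] = [L : \sigma(K^r)]$ by Proposition \ref{prop:conjtype}, so the exponent $n$ is common. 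Hence the two instances of \eqref{eq:dr-sym} for $\Phi$ and $\Psi$ have the same right-hand side, and since the $n$-th power map on fractional $\Z_L$-ideals is injective (they form a free abelian group on prime ideals), we conclude $N_{\Phi^r}(N_\Phi(\bb))\Z_L = N_{\Psi^r}(N_\Psi(\bb))\Z_L$. Descent via the injective map $\bb \mapsto \bb\Z_L$ on fractional $\Z_K$-ideals then yields the desired equality.

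The main technical obstacle I foresee is the careful bookkeeping of multiplicities when passing between the cosets $\Phi \subset G/H$, $\Phi^r \subset G/H^r$ and their lifts $\Phi_L, \Phi_L^r \subset G$, in particular verifying that the double product above is independent of the choice of lifts $\widetilde\psi$. Everything reduces to the stabiliser property defining $K^r$, after which the symmetric form \eqref{eq:dr-sym} makes the Galois invariance of the double reflex map transparent.
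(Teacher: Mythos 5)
Your proof is correct, but it takes a genuinely different route from the paper's. The paper simply composes the two identities $N_{\sigma\Phi} = \sigma \circ N_\Phi$ and $N_{(\sigma\Phi)^r} = N_{\Phi^r} \circ \sigma^{-1}$, the second being precisely the content of Proposition \ref{prop:conjtype}, so that $\sigma^{-1}\sigma$ cancels in three lines. You instead symmetrize: raising the double type norm to the power $n = |H|\,|H^r|$ turns it into the product over the multiset $\Phi_L^{-1}\Phi_L$, which is manifestly invariant under $\Phi_L \mapsto \sigma\Phi_L$, and you descend using injectivity of the $n$-th power map on fractional ideals. Your approach avoids having to identify the reflex type of $\sigma\Phi$ explicitly (you only use $H^r$-stability of $\prod_{\phi\in\Phi}\phi(\bb)\Z_L$, i.e.\ the definition of the reflex field), and your symmetric identity is in effect the group-algebra computation that the paper carries out by hand in Lemmas \ref{lemma:relations_c6_d6} and \ref{lemma:relations_sa_ss}; the cost is the extra bookkeeping with lifts, cosets and multiplicities, which you handle correctly. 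One small caveat: $H^r$-stability of $J = \prod_{\phi\in\Phi}\phi(\bb)\Z_L$ alone does not force $(\Z_{K^r}\cap J)\Z_L = J$ when $L \ext K^r$ is ramified (a Galois-stable ideal need only have exponents constant along Galois orbits, not divisible by the ramification indices); to identify contracted and extended ideals here one needs the standard Shimura--Taniyama fact that the type norm of an ideal is the extension of an ideal of the reflex field. The paper relies on the same fact whenever it writes equalities of fractional $\Z_L$-ideals, so this is a shared standard ingredient rather than a gap in your argument.
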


\begin{proof}
  Let $\Psi = \sigma \Phi$ for $\sigma \in \Gal (L \ext \Q)$. Then we have
  \begin{equation}
    N_\Psi (\aa)
    = \prod_{\psi \in \Psi} \psi (\aa)
    = \prod_{\phi \in \Phi} \sigma (\phi (\aa))
    = \sigma (\prod_{\phi \in \Phi} \phi (\aa))
    = \sigma (N_\Phi (\aa))
  \end{equation}
  for all ideals $\aa$ of $\Z_K$. Moreover, by Proposition \ref{prop:conjtype} we have
  \begin{equation}
    N_{\Psi^r} (\bb)
    = \prod_{\psi \in \Psi^r} \psi (\bb)
    = \prod_{\phi \in \Phi^r} \phi (\sigma^{-1} (\bb))
  \end{equation}
  for all ideals $\bb$ of $\Z_{K^r}$. Therefore
  \begin{equation}
    N_{\Psi^r} (N_\Psi (\aa))
    = N_{\Psi^r} (\sigma (N_\Phi (\aa)))
    = \prod_{\phi \in \Phi^r} \phi (\sigma^{-1} (\sigma (N_\Phi (\aa))))
    = \prod_{\phi \in \Phi^r} \phi (N_\Phi (\aa))
    = N_{\Phi^r} (N_\Phi (\aa))
  \end{equation}
  for all ideals $\aa$ of $\Z_K$, which proves our claim.
\end{proof}

For analogs of the next statement in the quartic case, see \cite[Proof of Theorem III.2.2]{streng-thesis} and \cite[Proposition 2.3.1]{kilicer3}: These results in turn go back to \cite[Equality (3.1)]{shimura-onavs}.

\begin{lemma}\label{lemma:relations_c6_d6}
  Let $K$ be a sextic CM field with Galois group isomorphic to $C_6$ or $D_6$, and let $\Psi$ be the distinguished CM type of $K$. Then for all fractional $\Z_K$-ideals $\aa$ we have an equality of fractional $\Z_L$-ideals
  \begin{equation}
    N_{\Phi^r} (N_\Phi(\aa)) =  N_{K|\Q}(\aa) \aa \aabar^{-1} N_{\Psi} (\aa).
  \end{equation}
  If $\Phi$ is imprimitive, then $N_{\Phi^r} (N_\Phi(\a)) = N_\Phi(\a)$.
\end{lemma}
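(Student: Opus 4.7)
The plan is to reduce, by means of Proposition \ref{prop:indep} and Corollary \ref{cor:trans}, to the case where $\Phi$ is the distinguished CM type $\Psi$ itself, and then to carry out a direct calculation in the group of fractional $\Z_L$-ideals using the explicit description of the reflex of $\Psi$ from Propositions \ref{prop:reflex_c6} and \ref{prop:reflex_d6}. Concretely: Proposition \ref{prop:indep} tells us that $N_{\Phi^r} \circ N_\Phi$ is constant on the Galois equivalence class of $\Phi$, and Corollary \ref{cor:trans} shows that every primitive CM type of $K$ is Galois equivalent to $\Psi$, so verifying the identity for $\Psi$ settles the primitive case; and the right-hand side manifestly depends only on $\aa$ and the distinguished $\Psi$, so the reduction is sound.

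For the direct verification, I would use that $(K^r,\Psi^r)=(K,\Psi)$ in both cases (Propositions \ref{prop:reflex_c6} and \ref{prop:reflex_d6}), so that the double reflex map is literally $N_\Psi \circ N_\Psi$. Writing $\Psi = \{\id, \sigma, \sigma^{-1}\}$ for a chosen generator $\sigma$ of order $6$ in $G$ (with $\rho = \sigma^3$), I would compute
\begin{equation*}
  N_\Psi(\aa) \;=\; \aa \cdot \sigma(\aa) \cdot \sigma^{-1}(\aa),
\end{equation*}
\begin{equation*}
  N_\Psi\bigl(N_\Psi(\aa)\bigr) \;=\; \aa^{3} \cdot \sigma(\aa)^{2} \cdot \sigma^{-1}(\aa)^{2} \cdot \sigma^{2}(\aa) \cdot \sigma^{-2}(\aa),
\end{equation*}
and then use the identities $\sigma^{2}(\aa) = \sigma^{-1}(\aabar)$ and $\sigma^{-2}(\aa) = \sigma(\aabar)$, which follow from $\sigma^3(\aa) = \rho(\aa) = \aabar$.

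For the right-hand side, expanding $N_{K|\Q}(\aa) = \prod_{i=0}^{5} \sigma^i(\aa)$ as a $\Z_L$-ideal and rewriting $\sigma^3, \sigma^4, \sigma^5$ applied to $\aa$ as $\aabar, \sigma(\aabar), \sigma^{-1}(\aabar)$ gives
\begin{equation*}
  N_{K|\Q}(\aa) \;=\; \aa\aabar \cdot \sigma(\aa)\sigma(\aabar) \cdot \sigma^{-1}(\aa)\sigma^{-1}(\aabar).
\end{equation*}
Multiplying this by $\aa\aabar^{-1}$ and by $N_\Psi(\aa)$ produces $\aa^{3}\sigma(\aa)^{2}\sigma^{-1}(\aa)^{2}\sigma(\aabar)\sigma^{-1}(\aabar)$, matching the expression for $N_\Psi(N_\Psi(\aa))$ derived above. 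The imprimitive assertion is immediate from the same reflex propositions: in both $C_6$ and $D_6$ cases, the reflex field of the imprimitive $\Phi$ is the unique quadratic CM subfield and $\Phi^r$ consists of a single embedding, so $N_{\Phi^r}$ is the identity on the relevant ideal and the composition collapses to $N_\Phi(\aa)$.

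I do not expect a genuine obstacle — the proof is a bookkeeping exercise once the reduction to $\Psi$ is in place. The one subtlety worth flagging in the write-up is the interpretation of $\sigma(\aa)$: in the $D_6$ case $\sigma$ does not normalize $H = \Gal(L\ext K)$ (by Proposition \ref{prop:norm}), so $\sigma(\aa)$ is not a $\Z_K$-ideal, only a $\Z_L$-ideal; by contrast, $\rho = \sigma^3$ is central and $\aabar = \rho(\aa)$ remains a $\Z_K$-ideal. Reading the entire identity inside the group of fractional $\Z_L$-ideals, as is the stated convention of the lemma, sidesteps this issue uniformly for both Galois groups.
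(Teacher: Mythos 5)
Your proposal is correct and follows essentially the same route as the paper: both reduce to a single CM type via Proposition \ref{prop:indep} (the paper to $\Phi_L=\{0,1,2\}$, you to the self-reflex $\Psi=\{0,1,5\}$ via Corollary \ref{cor:trans}) and then verify the identity by the same bookkeeping computation, which the paper phrases in the group algebra of $\Gal(L\ext\Q)$ modulo right $H$-cosets and you phrase directly as a product of $\Z_L$-ideals using $\aabar=\sigma^3(\aa)$. Your remark that the whole identity should be read in the group of fractional $\Z_L$-ideals, since $\sigma(\aa)$ need not be a $\Z_K$-ideal in the $D_6$ case, is exactly the point the paper's coset formalism is designed to handle.
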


\begin{proof}
  We prove the statement for the case where the Galois group is isomorphic to $D_6$. The Galois case is similar. Using the notation in Proposition \ref{prop:d6}, let $H = \Gal (L \ext K) = \langle \tau \rangle$, and let $\sigma$ be the generator of the set of embeddings of $K$ into $L$. By Proposition \ref{prop:indep}, it suffices to consider the case $\Phi_L = \{0, 1, 2\}$. Then $\Phi_L^r = \{0, 5, 4\}$, and the double norm computation gives rise to the element
  \begin{equation}
    (1 + \sigma^5 + \sigma^4) (1 + \sigma + \sigma^2) = 3 + 2\sigma + \sigma^2 + \sigma^4 + 2\sigma^5
  \end{equation}
  in the group algebra of $\Gal (L \ext \Q)$. If we consider the elements in this sum up to right multiplication by elements of the group $H$, we get
  \begin{equation}
    \begin{aligned}
      3H + 2\sigma H + \sigma^2 H + \sigma^4 H + 2\sigma^5 H & = (H + \sigma H + \sigma^2 H + \sigma^3 H + \sigma^4 H + \sigma^5 H)\\ & + (H - \sigma^3 H)  + (H + \sigma H  +  \sigma^5H).
    \end{aligned}
  \end{equation}
  The first two terms correspond to $N_{K|\Q}(\aa)$ and $\aa \aabar^{-1}$, respectively. The last sum corresponds to the CM type $\Psi_L = \{0, 1, 5\}$, and is independent of the choice of $\Phi_L$.

  In the imprimitive case we can take $\Phi_L = \{0, 2, 4\}$. The reflex field is then the unique quadratic CM subfield of $K$, and the reflex type its canonical inclusion, which shows our claim.
\end{proof}

\begin{proposition}\label{prop:bb_square_image_rtypenorm}
  Let $K$ be a sextic CM field with Galois group isomorphic to $C_6$ or $D_6$, and let $\Phi$ be a primitive CM type of $K$. If $[\b] \in \Cl(K)$ satisfies $\b\bbbar = \beta\Z_K$ for $\beta \in K_0^*$, then $[ \b^2 ]$ is in the image of the reflex type norm $N_{\Phi^r} : \Cl (K^r) \to \Cl (K)$.
\end{proposition}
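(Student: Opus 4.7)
The plan is to apply Lemma \ref{lemma:relations_c6_d6} to $\bb$ itself, extract $[\bb^2]$ from the resulting identity modulo $\im(N_{\Phi^r})$, and then exploit Galois equivalence of primitive CM types to identify $\im(N_\Psi)$ with $\im(N_{\Phi^r})$.

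First I would specialise Lemma \ref{lemma:relations_c6_d6} to $\aa = \bb$ to obtain the equality of fractional $\Z_K$-ideals
\begin{equation*}
N_{\Phi^r}(N_\Phi(\bb)) \; = \; N_{K \ext \Q}(\bb)\, \bb\, \bbbar^{-1}\, N_\Psi(\bb).
\end{equation*}
Passing to classes in $\Cl(K)$, the factor $N_{K \ext \Q}(\bb)\Z_K$ is principal (being a rational ideal extended to $\Z_K$), and the hypothesis $\bb\bbbar = \beta\Z_K$ gives $[\bbbar] = [\bb]^{-1}$, so $[\bb\bbbar^{-1}] = [\bb^2]$. Hence
\begin{equation*}
[\bb^2] \cdot [N_\Psi(\bb)] \; = \; [N_{\Phi^r}(N_\Phi(\bb))] \; \in \; \im(N_{\Phi^r}).
\end{equation*}
It therefore suffices to prove that $[N_\Psi(\bb)]$ is also in $\im(N_{\Phi^r})$.

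For this I would appeal to Corollary \ref{cor:trans} to write $\Phi = \tau\Psi$ for some $\tau \in G$. Since the reflex of the distinguished CM type $\Psi$ is $(K, \Psi)$ itself by Propositions \ref{prop:reflex_c6} and \ref{prop:reflex_d6}, Proposition \ref{prop:conjtype} identifies the reflex of $\Phi$ as $(\tau(K), \Psi\tau^{-1})$. Writing any $\aa \in \Cl(\tau(K))$ as $\aa = \tau(\cc)$ for $\cc \in \Cl(K)$, a direct computation gives
\begin{equation*}
N_{\Phi^r}(\tau(\cc)) \; = \; \Z_K \cap \prod_{\psi \in \Psi}\psi\tau^{-1}(\tau(\cc))\,\Z_L \; = \; N_\Psi(\cc),
\end{equation*}
so the image of $N_{\Phi^r}$ in $\Cl(K)$ coincides with the image of $N_\Psi$. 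In particular $[N_\Psi(\bb)] \in \im(N_{\Phi^r})$, which combined with the previous display gives $[\bb^2] \in \im(N_{\Phi^r})$.

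The only genuinely delicate step is the identification $\im(N_{\Phi^r}) = \im(N_\Psi)$, which rests on keeping track of how Proposition \ref{prop:conjtype} moves the reflex field under the Galois action; the remainder of the argument is a bookkeeping exercise on ideal classes. Note that the $C_6$ case is subsumed since there $\Phi = \Psi$ and $K^r = K$, so this identification is automatic.
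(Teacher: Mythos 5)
Your proof is correct and follows essentially the same route as the paper: the same double-norm identity from Lemma \ref{lemma:relations_c6_d6}, the self-reflexivity of the distinguished type $\Psi$, and the identification $\im(N_{\Phi^r}) = \im(N_{\Psi^r})$ via Proposition \ref{prop:conjtype} and Corollary \ref{cor:trans}. The only difference is cosmetic — you apply the lemma directly to $\Phi$ and absorb the $N_\Psi(\bb)$ term afterwards, whereas the paper works with $\Psi$ first and transfers to $\Phi$ at the end — and your parenthetical that $\Phi = \Psi$ in the $C_6$ case is harmlessly imprecise ($\Phi$ is only equivalent to $\Psi$ there), since your general $\tau$-argument already covers it.
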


\begin{proof}
  Let $\Psi$ be the distinguished primitive CM type of $K$ from Definition \ref{def:dist}. If $\a = N_{\Psi}(\b)\Z_L$, then by Lemma \ref{lemma:relations_c6_d6} we have an equality of fractional $\Z_L$-ideals
  \begin{equation}\label{eq:b_square_image_rtn}
    \begin{split}
         N_{\Psi^r}(\a) = N_{\Psi^r}(N_{\Psi} (\b))
       = N_{K|\Q}(\b)  \b\bbbar^{-1}  N_\Psi(\b)
       = N_{K|\Q}(\b) \beta^{-1}  N_{\Psi^r}(\b) \b^2.
    \end{split}
  \end{equation}
  (Here we have used the fact that the reflex type of $(K, \Psi)$ is given by $(K, \Psi)$, as was shown in the proof of Proposition \ref{prop:reflex_d6}.) We therefore have that $N_{\Psi^r} ([\aa]) = N_{\Psi^r} ([ \b ]) [ \b^2 ]$ and $[ \bb^2 ] = N_{\Psi^r} ([ \a \b^{-1} ]) \in \im (N_{\Psi^r})$. But Proposition \ref{prop:conjtype} implies that if $\Phi = \sigma (\Psi)$, then $\Phi^r = \Psi^r \sigma^{-1}$, hence $N_{\Phi^r}$ and $N_{\Psi^r}$ have equal images in $\Cl(K)$. (Indeed, if $\bb = N_{\Psi^r} (\cc)$, then $\bb = N_{\Phi^r} (\sigma (\cc))$.) Since all primitive CM types are Galois equivalent, we obtain our claim.
\end{proof}

\begin{proposition}
  Let $K$ be a sextic CM field with Galois group isomorphic to $C_6$ or $D_6$ with distinguished CM type $\Psi$. For any primitive CM type $\Phi$ of $K$ and any equivalence class $(\b, \beta)$ in $\Cc_K$ the equivalence class of $( N_{\Psi}(\b) \b^2, N(\b) \beta^2 )$ is in the image of the map $ \Nc_{\Phi^r} : \Cl(K^r) \rightarrow \Cc_K$. Furthermore, if $N_{\Psi}(\b) = \mu\Z_K$ is a principal $\Z_K$-ideal, then said image $(N_{\Psi}(\b) \b^2, N(\b) \beta^2)$ is equivalent to $( \b^2, \beta^2 )$.
\end{proposition}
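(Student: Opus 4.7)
The plan is to derive the first assertion by explicitly computing $\Nc_{\Psi^r}(N_\Psi(\b))$ using Lemma~\ref{lemma:relations_c6_d6}, showing that this class in $\Cc_K$ coincides with that of $(N_\Psi(\b)\b^2, N(\b)\beta^2)$ after a single scaling, and then transferring from $\Psi$ to an arbitrary primitive CM type $\Phi$ via the Galois transitivity established in Corollary~\ref{cor:trans} together with the behaviour of reflex types under Galois conjugation (Proposition~\ref{prop:conjtype}). For the second assertion, I would use the defining equivalence of $\Cc_K$ with the scalar $x = \mu$.

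In detail, I would first apply Lemma~\ref{lemma:relations_c6_d6} to the ideal $\b$. Using $\b\bar\b = \beta\Z_K$, which gives $\b\bar\b^{-1} = \beta^{-1}\b^2$, the lemma produces the $\Z_K$-ideal equality $N_{\Psi^r}(N_\Psi(\b)) = N(\b)\,\beta^{-1}\,\b^2\,N_\Psi(\b)$. The identity $N_\Psi(\b)\overline{N_\Psi(\b)} = N(\b)\Z_K$, which holds because $\Psi \cup \rho\Psi$ covers all embeddings of $K$, combined with $[K^r:\Q]=6$, yields $N(N_\Psi(\b)) = N(\b)^3$, and hence
\[
  \Nc_{\Psi^r}(N_\Psi(\b)) = \bigl(N(\b)\beta^{-1}\b^2 N_\Psi(\b),\,N(\b)^3\bigr).
\]
Taking $x = N(\b)/\beta \in K_0^*$, so that $x\bar{x} = N(\b)^2/\beta^2$, the $\Cc_K$-equivalence sends $(N_\Psi(\b)\b^2, N(\b)\beta^2)$ to $\Nc_{\Psi^r}(N_\Psi(\b))$, placing the former in $\im(\Nc_{\Psi^r})$. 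To pass to arbitrary $\Phi$, Corollary~\ref{cor:trans} writes $\Phi = \sigma\Psi$ for some $\sigma \in \Gal(L \ext \Q)$, and Proposition~\ref{prop:conjtype} then gives $\Phi^r = \Psi^r\sigma^{-1}$ and $K^r_\Phi = \sigma(K^r_\Psi)$; a direct unpacking shows $\Nc_{\Phi^r}(\sigma\aa) = \Nc_{\Psi^r}(\aa)$ for every $\aa$ in $\Cl(K^r_\Psi)$, so $\im(\Nc_{\Phi^r}) = \im(\Nc_{\Psi^r})$ and the first assertion follows.

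For the second assertion, with $N_\Psi(\b) = \mu\Z_K$ principal, applying the $\Cc_K$-equivalence to $(\b^2,\beta^2)$ with $x = \mu$ produces $(\mu\b^2,\mu\bar\mu\beta^2) = (N_\Psi(\b)\b^2,\mu\bar\mu\beta^2)$. Since $\mu\bar\mu\Z_K = N_\Psi(\b)\overline{N_\Psi(\b)} = N(\b)\Z_K$, the ratio $\mu\bar\mu/N(\b)$ lies in $(\Z_{K_0}^*)^+$; the main obstacle I anticipate is to show that this unit can be absorbed by an appropriate rescaling of $\mu$, equivalently that it lies in $N_{K|K_0}(\Z_K^*)$, the subgroup appearing as the denominator of the unit contribution to $\Cc_K$ in the exact sequence~\eqref{eq:exseq}. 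The freedom to replace $\mu$ by $w\mu$ for $w \in \Z_K^*$ multiplies the ratio by $N_{K|K_0}(w) = w\bar w$, which reduces the problem to this image question. Once the normalization $\mu\bar\mu = N(\b)$ is achieved, the identification $(N_\Psi(\b)\b^2, N(\b)\beta^2) = (\mu\b^2, \mu\bar\mu\beta^2) \sim (\b^2, \beta^2)$ via $x = \mu$ is immediate, yielding the claimed equivalence in $\Cc_K$.
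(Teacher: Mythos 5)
Your treatment of the first claim is correct and is essentially the paper's argument: apply Lemma~\ref{lemma:relations_c6_d6} to get $N_{\Psi^r}(N_\Psi(\b)) = N_{K|\Q}(\b)\,\beta^{-1}\,\b^2\,N_\Psi(\b)$ and $N(N_\Psi(\b)) = N_{K|\Q}(\b)^3$, then rescale by $x = N_{K|\Q}(\b)/\beta \in K_0^*$ to match $(N_\Psi(\b)\b^2, N_{K|\Q}(\b)\beta^2)$. The only cosmetic difference is that you compute $\Nc_{\Psi^r}(N_\Psi(\b))$ and then transfer to $\Phi$ via Corollary~\ref{cor:trans} and Proposition~\ref{prop:conjtype}, whereas the paper applies the lemma directly to $\Phi$ (the lemma is stated for an arbitrary primitive $\Phi$ and already outputs $N_\Psi$ on the right-hand side), so no transfer step is needed. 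Your transfer argument $\Nc_{\Phi^r}(\sigma\aa) = \Nc_{\Psi^r}(\aa)$ is nonetheless sound and is the same device the paper uses in Proposition~\ref{prop:bb_square_image_rtypenorm}.

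For the second claim you stop short: you reduce the statement to showing that the totally positive unit $u = \mu\overline{\mu}/N_{K|\Q}(\b)$ lies in $N_{K|K_0}(\Z_K^*)$, and you explicitly leave this open. That is a genuine gap in your write-up as it stands --- without it you only get equivalence up to an element of $(\Z_{K_0}^*)^+/N_{K|K_0}(\Z_K^*)$, which by the exact sequence \eqref{eq:exseq} is precisely the unit part of $\Cc_K$ and need not be trivial. You should know, however, that the paper's own proof simply writes $(\b^2, (\mu\overline{\mu})^{-1}N_{K|\Q}(\b)\beta^2) \sim (\b^2,\beta^2)$ with no justification of this last step, so you have put your finger on a point the paper elides rather than missed an argument it supplies. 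In the only place the proposition is used, Theorem~\ref{thm:boundexp}, the discrepancy is harmless because the representatives $v \in V$ of $(\Z_{K_0}^*)^+/N_{K|K_0}(\Z_K^*)$ are looped over anyway and absorb exactly this unit class; but as a standalone claim it requires either choosing the generator $\mu$ so that $\mu\overline{\mu} = N_{K|\Q}(\b)$ (not always possible) or weakening ``equivalent'' to ``equivalent modulo the image of $(\Z_{K_0}^*)^+/N_{K|K_0}(\Z_K^*)$''.
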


\begin{proof}
  With $\a = N_{\Phi}(\b)$ and Lemma~\ref{lemma:relations_c6_d6} we get that
  \begin{equation}
    \begin{aligned}
      \Nc_{\Phi^r} (\a) & = ( N_{\Phi^r}(\a), N(\a) ) = ( N_{\Phi^r}(N_{\Phi} (\b)), N(N_{\Phi} (\b)) )\\
      & = ( N(\b)_{K|\Q}  \beta^{-1} N_{\Psi}(\b) \b^2 , N(\b)_{K|\Q}  \beta^{-1} N_{\Phi}(\b) \b^2 \overline{N(\b)_{K|\Q}  \beta^{-1} N_{\Phi}(\b) \b^2} ) \\ & = ( N(\b)_{K|\Q} \beta^{-1} N_{\Psi}(\b) \b^2, N(\b)_{K|\Q}^2 N_{\Phi}(\b) \overline{N_{\Phi}(\b)} ) \\ &  = ( N(\b)_{K|\Q} \beta^{-1} N_{\Psi}(\b) \b^2, N(\b)_{K|\Q}^3) .
    \end{aligned}
  \end{equation}
  Note that $N_{\Psi}(\b)$ is indeed an ideal of $\ZZ_K$ since $(K, \Psi)$ is its own reflex, and that we indeed have that
  \begin{equation}
    \begin{aligned}
      N_{K|K_0}( N_{K|\Q}(\b)  \beta^{-1} N_{\Psi}(\b) \b^2 ) & = N_{K|\Q}(\b)  \beta^{-1} N_{\Psi}(\b) \b^2 \overline{ N_{K|\Q}(\b)  \beta^{-1} N_{\Psi}(\b) \b^2 }\\ & = N_{K|\Q}(\b)^2 (\b\bbbar)^{-1} \overline{ (\b\bbbar)^{-1}} N_{\Psi}(\b) \overline{N_{\Psi}(\b)} \b^2 \bbbar^2 \\ & = N_{K|\Q}(\b)^3 \Z_K.
    \end{aligned}
  \end{equation}
  Then since $\beta \in K_0$, the equivalence relation \eqref{def:shimura_class_group} yields
  \begin{equation}
    \begin{aligned}
      ( N_{K|\Q}(\b)  \beta^{-1} N_\Psi(\b) \b^2, N_{K|\Q}(\b)^3 ) & \sim ( \beta^{-1} N_\Psi(\b)  \b^2, N_{K|\Q}(\b) ) \sim ( N_{\Psi}(\b) \b^2, N_{K|\Q}(\b) \beta^2 ),
    \end{aligned}
  \end{equation}
  This shows the first claim. If $N_{\Psi}(\b) = \mu\Z_K$ is a principal ideal, then
  \begin{eqnarray*}
    ( N_{\Psi}(\b) \b^2, N_{K|\Q}(\b) \beta^2 ) \sim ( \mu \b^2, N_{K|\Q}(\b) \beta^2 ) \sim ( \b^2, (\mu \overline{\mu})^{-1} N_{K|\Q}(\b) \beta^2 ) \sim (\b^2,\beta^2)
  \end{eqnarray*}
  which shows the second claim.
\end{proof}

\begin{lemma}\label{lemma:relations_sa_ss}
  Let $K$ be a sextic CM field with Galois group isomorphic to $C_2^3\rtimes C_3$ or $C_2^3 \rtimes S_3$, and let $\Phi$ be a CM type of $K$. Then for all fractional $\Z_K$-ideals $\aa$ we have an equality of fractional $\Z_L$-ideals
  \begin{equation}
    N_{\Phi^r} (N_\Phi(\aa)) = N_{K|\Q} (\aa)^2 (\aa \aabar^{-1})^2 .
  \end{equation}
\end{lemma}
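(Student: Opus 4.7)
The plan is to imitate the proof of Lemma \ref{lemma:relations_c6_d6}. By Proposition \ref{prop:indep} together with Corollary \ref{cor:trans}, it suffices to establish the identity for one chosen primitive CM type $\Phi$, since all primitive CM types of $K$ are Galois equivalent.

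I would take $\Phi = \{\id|_K, \sigma|_K, \sigma^2|_K\}$ as in Propositions \ref{prop:reflex_c23_c3_s3} and \ref{prop:reflex_c23_s3}; the reflex CM type, in both Galois cases, has representatives $\{\id, n_1, n_2, n_1 n_2\}$ for $G / H^r$. The double type norm $N_{\Phi^r} \circ N_\Phi$, after lifting all ideals to $\Z_L$, corresponds (modulo the right action of $H = \Gal(L \ext K)$) to the product
\begin{equation*}
  (e + n_1 + n_2 + n_1 n_2)(e + \sigma + \sigma^2) \in \Z[G]
\end{equation*}
acting on $\aa \Z_L$ in the same sense as in the proof of Lemma \ref{lemma:relations_c6_d6}. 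Using the conjugation relations $\sigma^{-1} n_i \sigma = n_{i-1}$ (indices cyclic modulo $3$) and the identity $n_0 = \rho n_1 n_2$, one reduces each of the $12$ resulting terms modulo the cosets $H, \sigma H, \sigma^2 H, \rho H, \sigma \rho H, \sigma^2 \rho H$ enumerated in the proof of Proposition \ref{prop:c23_c3_s3}. A direct bookkeeping shows that $e, n_1, n_2, n_1 n_2$ all lie in $H$; that $\sigma, n_2 \sigma \in \sigma H$ while $n_1 \sigma, n_1 n_2 \sigma \in \sigma \rho H$; and analogously $\sigma^2, n_1 \sigma^2 \in \sigma^2 H$ while $n_2 \sigma^2, n_1 n_2 \sigma^2 \in \sigma^2 \rho H$. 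Crucially, no term lands in $\rho H$. The grand total is
\begin{equation*}
  4 [H] + 2 [\sigma H] + 2 [\sigma^2 H] + 2 [\sigma \rho H] + 2 [\sigma^2 \rho H].
\end{equation*}

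To match this with the right-hand side, note that in $\Z[G/H]$ the ideal $N_{K|\Q}(\aa)$ corresponds to $\sum_{gH \in G/H} [gH]$, the sum of all six cosets, while $\aa \aabar^{-1}$ corresponds to $[H] - [\rho H]$ (since $\aabar = \rho(\aa)$). Hence $N_{K|\Q}(\aa)^2 (\aa \aabar^{-1})^2$ corresponds to $2 \sum_{gH}[gH] + 2([H] - [\rho H])$, which is exactly the expression displayed above; the two contributions to $[\rho H]$ cancel. The argument is identical for $G = C_2^3 \rtimes C_3$ and $G = C_2^3 \rtimes S_3$, since the coset structure of $G / H$ and the representatives of $\Phi$ and $\Phi^r$ coincide in the two cases. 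The main difficulty is the careful accounting of the $12$ coset labels; once the $C_3$-action on $C_2^3$ is fixed, everything proceeds mechanically.
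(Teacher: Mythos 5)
Your proposal is correct and follows essentially the same route as the paper: reduce to the single CM type $\Phi_1$ via Proposition \ref{prop:indep} (all CM types being primitive and Galois equivalent here), expand $(e + n_1 + n_2 + n_1 n_2)(e + \sigma + \sigma^2)$ in the group algebra modulo the right action of $H$, and sort the twelve terms into the cosets of \eqref{eq:cosets} to obtain $4[H] + 2[\sigma H] + 2[\sigma^2 H] + 2[\sigma\rho H] + 2[\sigma^2\rho H] = 2\sum_{gH}[gH] + 2([H]-[\rho H])$. Your coset bookkeeping agrees term by term with the paper's computation, so there is nothing to add.
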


\begin{proof}
  We prove this for the CM type $\Phi_1$ and Galois group $C_2^3\rtimes C_3$: The statement for the other CM types follows from Proposition \ref{prop:indep}, and the argument for the group $C_2^3\rtimes S_3$ is similar. Using the notation in Proposition \ref{prop:reflex_c23_c3_s3}, the extensions of $\Phi_1$ and $\Phi_1^r$ to $L$ are given by $\left\{ 1, \sigma, \sigma^2 \right\}$ and $\left\{ 1, n_1, n_2, n_1 n_2 \right\}$, respectively. Considering the given double norm comes down to studying the element
  \begin{equation}
    (1 + n_1 + n_2 + n_1 n_2) (1 + \sigma + \sigma^2) = 1 + n_1 + n_2 + n_1 n_2 + \sigma + n_1 \sigma + n_2 \sigma + n_1 n_2 \sigma + \sigma^2 + n_1 \sigma^2 + n_2 \sigma^2 + n_1 n_2 \sigma^2
  \end{equation}
  in the group algebra of $\Gal (L \ext \Q)$, where we consider the elements in this sum up to right multiplication by elements of the subgroup $H = \langle n_1, n_2 \rangle$ that corresponds to the field $K$. In terms of the cosets in \eqref{eq:cosets}, this yields
  \begin{equation}
    \begin{split}
      & H + n_1 H + n_2 H + n_1 n_2 H + \sigma H + n_1 \sigma H + n_2 \sigma H + n_1 n_2 \sigma H + \sigma^2 H + n_1 \sigma^2 H + n_2 \sigma^2 H + n_1 n_2 \sigma^2 H \\
      = & H + H + H + H + \sigma H + \sigma \rho H + \sigma H + \sigma \rho H + \sigma^2 H + \sigma^2 H + \sigma^2 \rho H + \sigma^2 \rho H \\
      = & 4 H + 2 \sigma H + 2 \sigma^2 H + 2 \sigma \rho H + 2 \sigma^2 \rho H \\
      = & (2 H + 2 \sigma H + 2 \sigma^2 H + 2 \rho H + 2 \sigma \rho H + 2 \sigma^2 \rho H) + (2 H - 2 \rho H),
    \end{split}
  \end{equation}
  which shows the claim.
\end{proof}

\begin{proposition}\label{prop:bb_dsquare_image_rtypenorm}
  Let $K$ be a sextic CM field with Galois group isomorphic to $C_2^3\rtimes C_3$ or $C_2^3 \rtimes S_3$, and let $\Phi$ be a CM type of $K$. If $[\b] \in \Cl(K)$ satisfies $\b\bbbar = \beta\Z_K$ for $\beta \in K_0^*$, then $[ \b^4 ]$ is in the image of the reflex type norm $N_{\Phi^r} : \Cl (K^r) \to \Cl (K)$.
\end{proposition}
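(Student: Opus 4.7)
The plan is to mimic the proof of Proposition \ref{prop:bb_square_image_rtypenorm}, substituting Lemma \ref{lemma:relations_sa_ss} for Lemma \ref{lemma:relations_c6_d6}. Since Lemma \ref{lemma:relations_sa_ss} applies uniformly to every CM type of $K$ (by its statement, combined with the transitivity of Galois equivalence from Corollary \ref{cor:trans}), there is no need to reduce to a distinguished type as in the $C_6$ and $D_6$ cases, and the argument should be somewhat cleaner.

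First, I would apply Lemma \ref{lemma:relations_sa_ss} directly to the ideal $\bb$, obtaining the equality of fractional $\Z_L$-ideals
\begin{equation}
  N_{\Phi^r}(N_{\Phi}(\bb)) = N_{K|\Q}(\bb)^2 (\bb \bbbar^{-1})^2 .
\end{equation}
Next, I would exploit the hypothesis $\bb \bbbar = \beta \Z_K$ with $\beta \in K_0^*$: since $\bbbar = \beta \bb^{-1}$, this gives $\bb \bbbar^{-1} = \beta^{-1} \bb^2$, so that $(\bb \bbbar^{-1})^2 = \beta^{-2} \bb^4$, and hence
\begin{equation}
  N_{\Phi^r}(N_{\Phi}(\bb)) = N_{K|\Q}(\bb)^2 \beta^{-2} \bb^4 .
\end{equation}

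To conclude, I would pass to $\Cl(K)$ by observing that the two factors $N_{K|\Q}(\bb)^2$ and $\beta^{-2}$ contribute trivially: the first is (the extension to $\Z_K$ of) a principal rational integer ideal, and the second is generated by $\beta^{-2} \in K^*$. Thus in $\Cl(K)$ we have $[N_{\Phi^r}(N_{\Phi}(\bb))] = [\bb^4]$. Finally, I would recall that $N_{\Phi}(\bb)$ naturally descends to a fractional $\Z_{K^r}$-ideal (it is fixed by $H^r = \Gal(L \ext K^r)$ by construction), so that $[N_{\Phi}(\bb)] \in \Cl(K^r)$ is a well-defined element whose image under the reflex type norm as defined in \eqref{def:reflex_type_norm_map} equals the class above. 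This places $[\bb^4]$ in $\im(N_{\Phi^r})$, as required.

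I do not expect any real obstacle: the main content was already absorbed into Lemma \ref{lemma:relations_sa_ss}, and the only subtlety is the bookkeeping needed to move between $\Z_L$-ideals, $\Z_{K^r}$-ideals, and $\Z_K$-ideals. This is handled exactly as in the analogous step of Proposition \ref{prop:bb_square_image_rtypenorm}.
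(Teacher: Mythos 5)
Your proposal is correct and follows essentially the same route as the paper: both apply Lemma \ref{lemma:relations_sa_ss} to $\bb$, use $\bb\bbbar = \beta\Z_K$ to rewrite $(\bb\bbbar^{-1})^2$ as $\beta^{-2}\bb^4$, and discard the principal factors to conclude that $[\bb^4] = N_{\Phi^r}([N_\Phi(\bb)])$. Your additional remarks on why $N_\Phi(\bb)$ descends to $\Z_{K^r}$ and why no reduction to a distinguished type is needed are accurate but only make explicit what the paper leaves implicit.
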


\begin{proof}
  Once more we only give the proof for the Galois group $C_2^3\rtimes C_3$. If $\a = N_{\Phi}(\b)\Z_L$, then by Lemma \ref{lemma:relations_sa_ss} we have an equality of fractional $\Z_L$-ideals
  \begin{equation}\label{eq:b_dsquare_image_rtn}
    \begin{split}
         N_{\Phi^r}(\a)   = N_{\Phi^r}(N_{\Phi} (\b)) =  N_{K|\Q}(\b)^2  \bigg( \b\bbbar^{-1} \bigg)^2  =  N_{K|\Q}(\b)^2  \beta^{-2}  \b^4.
    \end{split}
  \end{equation}
  We therefore have that $N_{\Phi^r} ([\aa]) = [ \b^4 ]$, which shows the claim.
\end{proof}

\begin{proposition}
  Let $K$ be a sextic CM field with Galois group isomorphic to $C_2^3\rtimes C_3$ or $C_2^3 \rtimes S_3$. For any CM type $\Phi$ of $K$ and any equivalence class $(\b, \beta)$ in $\Cc_K$, the equivalence class of $( \b^4, \beta^4 )$ is in the image of the map $ \Nc_{\Phi^r}: \Cl(K^r) \rightarrow \Cc_K$.
\end{proposition}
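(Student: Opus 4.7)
The plan is to follow the template set by the analogous statement proved just above for the $C_6$ and $D_6$ cases, swapping in Lemma~\ref{lemma:relations_sa_ss} for Lemma~\ref{lemma:relations_c6_d6}. Concretely, given an equivalence class $(\bb,\beta) \in \Cc_K$, I would set $\aa = N_\Phi(\bb)$, regarded as an ideal of $\Z_{K^r}$, and then compute the element $\Nc_{\Phi^r}([\aa]) = ([N_{\Phi^r}(\aa)], N(\aa))$ of $\Cc_K$ explicitly.

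For the first component I apply Lemma~\ref{lemma:relations_sa_ss} to get
\begin{equation*}
N_{\Phi^r}(\aa) \;=\; N_{\Phi^r}(N_\Phi(\bb)) \;=\; N_{K|\Q}(\bb)^2\,(\bb\,\bbbar^{-1})^2.
\end{equation*}
Since $\bb \bbbar = \beta\Z_K$ gives $\bbbar^{-1} = \beta^{-1}\bb$, this rewrites as $N_{\Phi^r}(\aa) = N_{K|\Q}(\bb)^2 \beta^{-2}\, \bb^4$. For the second component I use the identity $N_{\Phi^r}(\aa)\,\overline{N_{\Phi^r}(\aa)} = N(\aa)\,\Z_K$; combining with totality of $\beta \in K_0^\ast$ (so $\overline{\beta} = \beta$) yields $N(\aa) = N_{K|\Q}(\bb)^4$.

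The final step is to verify the equivalence
\begin{equation*}
\bigl( N_{K|\Q}(\bb)^2 \beta^{-2}\,\bb^4,\; N_{K|\Q}(\bb)^4 \bigr) \;\sim\; (\bb^4,\,\beta^4)
\end{equation*}
in $\Cc_K$. Taking $x = \beta^2 N_{K|\Q}(\bb)^{-2} \in K_0^\ast \subset K^\ast$ (so that $x = \overline{x}$), one checks directly that $x \cdot N_{K|\Q}(\bb)^2 \beta^{-2}\bb^4 = \bb^4$ and $x\,\overline{x}\cdot N_{K|\Q}(\bb)^4 = \beta^4$, which is precisely the equivalence relation defining $\Cc_K$.

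There is no real obstacle here; the argument is a cleaner version of the $C_6$/$D_6$ proof, which required the auxiliary factor $N_\Psi(\bb)$ because the double type norm in those cases produced a third term beyond $N_{K|\Q}(\bb)$ and $\bb\bbbar^{-1}$. In the present cases Lemma~\ref{lemma:relations_sa_ss} shows that only those two terms appear (each squared), so the fourth power $\bb^4$ in the conclusion comes out immediately, and the rectifying element $x$ lies in $K_0^\ast$ for free.
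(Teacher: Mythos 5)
Your proposal is correct and follows essentially the same route as the paper: set $\aa = N_\Phi(\bb)$, apply Lemma~\ref{lemma:relations_sa_ss} to get $N_{\Phi^r}(\aa) = N_{K|\Q}(\bb)^2\beta^{-2}\bb^4$, compute $N(\aa) = N_{K|\Q}(\bb)^4$ via $N_{\Phi^r}(\aa)\overline{N_{\Phi^r}(\aa)} = N(\aa)\Z_K$, and rescale by an element of $K_0^*$ to reach $(\bb^4,\beta^4)$. The only cosmetic difference is that the paper performs the final rescaling in two steps (first by $N_{K|\Q}(\bb)^{-2}$, then by $\beta^2$) where you use the single element $x = \beta^2 N_{K|\Q}(\bb)^{-2}$.
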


\begin{proof}
  With $\a = N_{\Phi}(\b)$ and Lemma~\ref{lemma:relations_sa_ss} we get
  \begin{equation}
    \begin{aligned}
      \Nc_{\Phi^r} (\a) & = ( N_{\Phi^r}(\a), N(\a) ) = ( N_{\Phi^r}(N_{\Phi} (\b)), N(N_{\Phi} (\b)) )\\ & = ( N_{K|\Q}(\b)^2  \beta^{-2}  \b^4, N_{K|\Q}(\b)^2 (\b\bbbar^{-1})^2 \overline{N_{K|\Q}(\b)^2 (\b\bbbar^{-1})^2} ) \\ & = ( N_{K|\Q}(\b)^2  \beta^{-2}  \b^4, N_{K|\Q}(\b)^4 ),
    \end{aligned}
  \end{equation}
  Since $\beta \in K_0$, using the equivalence relation on the Shimura class group yields that
  \begin{equation}
    ( N_{K|\Q}(\b)^2  \beta^{-2}  \b^4, N_{K|\Q}(\b)^4 ) \sim ( \beta^{-2}\b^4, 1 )\sim (\b^4, \beta^4),
  \end{equation}
  which shows the claim.
\end{proof}

We can now state the main result of this section:

\begin{theorem}\label{thm:boundexp}
  Let $G_2 = \ker (N_{K | K_0}) \subset \Cl (K)$ be the subgroup of classes $[ \bb ]$ with the property that $\bb \bbbar$ is generated by a totally positive element of $K_0$. Let $B$ be a set of ideals that yields representatives of the quotient $Q = G_2 / e G_2$, where $e = 2$ if $\Gal (K) \in \left\{ C_6, D_6 \right\}$ and where $e = 4$ if $\Gal (K) \in \left\{ C_2^3 \rtimes C_3, C_2^3 \rtimes S_3 \right\}$. Similarly, let $V$ be a set of units that yields representatives of the quotient $(\Z^*_{K_0})^+ / N_{K | K_0} (\Z_K^*)$.

  Fix $A (\aa_0, \xi_0)$ in $\Mc_{\Z_K} (\Phi)$, and let $A (\aa, \xi)$ in $\Mc_{\Z_K} (\Phi)$ be given. Then the Galois orbit of $A (\aa, \xi)$ under the action of $G^r = \Gal (\Qbar \ext K^r)$ contains an abelian variety isomorphic to $A (\bb \aa_0, v \beta^{-1} \xi_0)$, where $\bb \in B$, where $\beta \in K_0$ generates $\bb \bbbar$, and where $v \in V$.
\end{theorem}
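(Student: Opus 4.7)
The plan is to combine the torsor structure on $\Mc_{\Z_K}(\Phi)$ from Proposition \ref{prop:torsor} with the Main Theorem of complex multiplication (Theorem \ref{thm:srecip}). By the former, there is a unique class $c = [(\cc,\gamma)] \in \Cc_K$ such that $A(\aa,\xi) \cong c \cdot A(\aa_0,\xi_0)$, and by the latter the Galois orbit of $A(\aa,\xi)$ under $G^r$ is precisely the set of ppavs $c' \cdot A(\aa_0,\xi_0)$ with $c' \in c \cdot \im(\Nc_{\Phi^r})$. Using the action $(\cc,\gamma)(\aa_0,\xi_0) = (\cc^{-1}\aa_0, \gamma\xi_0)$ from Section \ref{sec:geometric_relevance}, the task reduces to producing $\bb \in B$ and $v \in V$ for which $c \equiv [(\bb^{-1}, v\beta^{-1})] \pmod{\im(\Nc_{\Phi^r})}$, where $\beta$ is a totally positive generator of $\bb\bbbar$ (which exists because $[\bb] \in G_2$).

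Recall from \eqref{eq:exseq} the short exact sequence $1 \to U \to \Cc_K \xrightarrow{\pi} G_2 \to 1$ with $U = (\Z^*_{K_0})^+/N_{K|K_0}(\Z_K^*)$, so that $\pi(c) = [\cc] \in G_2$. Applying Proposition \ref{prop:bb_square_image_rtypenorm} when $\Gal(L \ext \Q) \in \{C_6, D_6\}$ and Proposition \ref{prop:bb_dsquare_image_rtypenorm} in the remaining two cases yields the crucial inclusion $e G_2 \subseteq \pi(\im(\Nc_{\Phi^r}))$. Since $B$ represents $G_2/eG_2$, one can thus choose $\bb \in B$ such that $[\cc][\bb] \in eG_2$, and then select $\tilde c = [(\tilde\cc, \tilde\gamma)] \in \im(\Nc_{\Phi^r})$ with $[\tilde\cc] = [\cc\bb]$ in $\Cl(K)$.

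For the chosen $\beta$ one has $[(\bb,\beta)] \in \Cc_K$, and I would then analyse $c \cdot [(\bb,\beta)] \cdot \tilde c^{-1}$, which has trivial image in $G_2$ and therefore lies in $U$. Concretely, writing $\cc\bb = y\tilde\cc$ for some $y \in K^*$ and using the defining equivalence of $\Cc_K$ gives $c \cdot [(\bb,\beta)] = [(\tilde\cc, \gamma\beta/(y\overline{y}))]$; comparing the norms $(\cc\bb)\overline{\cc\bb} = y\overline{y}\,\tilde\cc\overline{\tilde\cc}$ with $\tilde\cc\overline{\tilde\cc} = \tilde\gamma \Z_K$ forces $\epsilon := \gamma\beta/(y\overline{y}\,\tilde\gamma)$ to be a totally positive element of $\Z^*_{K_0}$. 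Selecting $v \in V$ with $[v] = [\epsilon]$ in $U$ then yields $[(\Z_K, v)] = [(\Z_K, \epsilon)]$, whence $c \cdot [(\bb,\beta)] = \tilde c \cdot [(\Z_K, v)]$, and rearranging gives $c = \tilde c \cdot [(\bb^{-1}, v\beta^{-1})]$. Acting on $(\aa_0, \xi_0)$ produces $A(\bb\aa_0, v\beta^{-1}\xi_0)$ as a representative of the Galois orbit, as desired.

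The main obstacle, and the place where the genus $3$ analysis genuinely enters, is the inclusion $eG_2 \subseteq \pi(\im(\Nc_{\Phi^r}))$: the exponent $e$ is controlled by the double-reflex identities of Lemmas \ref{lemma:relations_c6_d6} and \ref{lemma:relations_sa_ss}, which isolate the factor $\bb\bbbar^{-1}$ in the cyclic and dihedral cases and its square in the remaining two cases. Once that inclusion is granted, the rest of the argument is essentially bookkeeping inside the short exact sequence defining $\Cc_K$.
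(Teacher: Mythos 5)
Your proof is correct and follows essentially the same route as the paper: the torsor structure from Proposition \ref{prop:torsor} and the exact sequence \eqref{eq:exseq} reduce everything to the class group level, Propositions \ref{prop:bb_square_image_rtypenorm} and \ref{prop:bb_dsquare_image_rtypenorm} give that $eG_2$ lies in the image of the reflex type norm, and Theorem \ref{thm:srecip} converts the adjustment by $\im(\Nc_{\Phi^r})$ into a Galois conjugation. The only difference is that you spell out the unit bookkeeping (the element $\epsilon$ and the choice of $v \in V$) that the paper compresses into ``another invocation of Equation \eqref{eq:exseq}''.
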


\begin{proof}
  Proposition~\ref{prop:torsor} along with the exact sequence in Equation~\eqref{eq:exseq} shows that $A (\aa, \xi)$ is isomorphic to $A (\bb \aa_0, v \beta^{-1} \xi_0)$ for some ideal $\bb$ with $[ \bb ] \in G_2$ and for some $v \in V$. Propositions \ref{prop:bb_square_image_rtypenorm} and \ref{prop:bb_dsquare_image_rtypenorm} show that the first component of the map $\Nc_{\Phi^r}$ surjects onto $e G_2$. Applying a corresponding Galois conjugation to $A (\aa, \xi)$ if needed, we may therefore assume that $\bb \in B$, after which another invocation of Equation~\eqref{eq:exseq} shows our claim.
\end{proof}

\section{Running through the LMFDB}\label{sec:lmfdb}

\subsection{Algorithms}\label{sec:algs}

Let $K$ be a sextic CM field. The considerations in the previous sections give rise to a method to determine representatives of the set of ppavs with CM by $\Z_K$ up to isomorphism and Galois conjugation. We split up the steps of this method into several algorithms. Throughout, we fix not only the CM field $K$ but also a primitive CM type $\Phi$ of $K$ with values in $\C$. (It is in fact not essential that $\Phi$ be primitive, but this is the case that interests us in the current article.) Similar algorithms were considered in lower genus in the previous works \cite{engethome} and \cite{streng-algorithm}. We discuss differences in our approach as we go.

\renewcommand{\theenumi}{\arabic{enumi}}
\begin{algorithm}\label{alg:algo1}(Precomputation step)

  \textsc{Input:} A sextic CM field $K$.

  \textsc{Output:} Precomputed data used in the later Algorithms \ref{alg:algo2}, \ref{alg:algo3}, and \ref{alg:algo4}.

  \begin{enumerate}
    \item Determine the class group and unit group $\Cl (K), \Z_K^*$ of $K$ and the class group, narrow class group and unit group $\Cl (K_0), \Cl^+ (K_0), \Z_{K_0}^*$ of its totally real subfield $K_0$;
    \item Determine the subgroup $G_1 \subset \Cl (K)$ of classes $[ \aa ] \in \Cl (K)$ with the property that $\aa \aabar$ is generated by an element of $K_0$;
    \item Determine the subgroup $G_2 \subset \Cl (K)$ of classes $[ \aa ] \in G_1$ with the property that $\aa \aabar$ is generated by a totally positive element of $K_0$;
    \item Let $Q = G_2 / e G_2$, where $e = 2$ if $\Gal (K) \in \left\{ C_6, D_6 \right\}$ and where $e = 4$ if $\Gal (K) \in \left\{ C_2^3 \rtimes C_3, C_2^3 \rtimes S_3 \right\}$;
    \item Determine a set of ideals $C$ of $\Z_K$ that yields representatives of the quotient $G_1 / G_2$, as well as a set of ideals $B$ of $\Z_K$ that yields representatives of the quotient $Q = G_2 / e G_2$;
    \item Determine the subgroup $U_1 \subset \Z_{K_0}^*$ of totally positive units in $\Z_{K_0}^*$;
    \item Determine the subgroup $U_2 \subset U_1$ of units in $\Z_{K_0}^*$ that are norms from $\Z_K^*$;
    \item Determine a set of units $W$ that yields representatives of the quotient $\Z_{K_0}^* / U_1$, as well as a set of units $V$ that yields representatives of the quotient $U_1 / U_2$.
  \end{enumerate}
\end{algorithm}
\renewcommand{\theenumi}{\roman{enumi}}

The steps in this algorithm can be performed by using classical algorithms for class and unit groups. We only give additional remarks on steps that are somewhat less standard.

\begin{remark}\label{rem:algo1}
  \hspace{0cm}
  \begin{enumerate}
    \item Under the generalized Riemann hypothesis, the calculation of the class and unit group of $K$ and $K_0$ in Step (1) speeds up tremendously. We have therefore used this assumption while performing our calculations.

    \item We can determine the subgroup $G_1$ in Step (2) as the kernel of the homomorphism $\Cl (K) \to \Cl (K_0)$ given by $[ \aa ] \mapsto [ \aa \aabar ]$, and $G_2$ as the kernel of a similar homomorphism to $\Cl^+ (K_0)$. Similar considerations apply to the determination of $U_1$ and $U_2$ in Steps (6) and (7).

    \item It is important that the representatives returned by Algorithm \ref{alg:algo1} be minimized, since otherwise large precision loss will occur in later steps. In \cite[\S 4.1]{engethome}, this minimization is also mentioned as being useful when working with the Shimura class group in the genus-2 case. For our purposes this is not merely useful, but also crucial in practice, as the class groups involved are of considerable size and working with large powers of ideal class generators without reducing these already causes unacceptable precision loss when determining the corresponding lattices in $\C^3$ in Algorithm \ref{alg:algo4}. We therefore spend a few lines on this reduction step.

      For an ideal representative in $B$ and $C$, this observation means that it should be multiplied with a principal ideal in such a way that the norm of the resulting product is smaller than the Minkowski bound $M$ of $K$. This can be done as follows. Given an ideal $\aa$ to be minimized, one computes the lattice $\Gamma$ in $\CC^3$ that is the image of $\aa^{-1}$ under the complex embeddings of $K$. One then determines a short vector $\alpha$ in $\Gamma$, and the corresponding element $\alpha$ of $\aa^{-1}$ will satisfy $N_{K | \Q} (\alpha) \le M N_{K | \Q} (\aa^{-1})$. Therefore the norm of the ideal $\alpha \aa$ is at most $M$, and we use this product as a minimized ideal representative.

      For a unit that yields a representative in $V$, resp.\  $W$, being small means the following. Let $\ell : \Z_{K_0}^* \to \R^2$ be the log map whose image is the Dirichlet lattice of the unit group $\Z_{K_0}^*$. Then given an element $u$ of $V$ (resp.\  $W$) to be minimized, we can use closest vector algorithms to find an element $u_1$ (resp.\  $u_2)$ of $U_1$ (resp.\  $U_2$) such that that $\ell (u) + \ell (u_1)$ (resp.\ $\ell (u) + \ell (u_2)$) is small, and we use the corresponding product $u \cdot u_1$ (resp.\ $u \cdot u_2$) as a minimized unit representative.

    \item Note that in contrast to the methods in \cite{engethome}, our precomputation does not require the computation of the Shimura class group or the image of the reflex norm, which simplifies its description.
  \end{enumerate}
\end{remark}

\renewcommand{\theenumi}{\arabic{enumi}}
\begin{algorithm}\label{alg:algo2}(Determining an initial triple $(\Phi, \aa, \xi)$)

  \textsc{Input:} A sextic CM field $K$ and a primitive CM type $\Phi$ of $K$.

  \textsc{Output:} A single triple $(\Phi, \aa, \xi)$, with $\aa$ a fractional $\Z_K$-ideal and with $\xi \in K$ totally imaginary, such that $(\Phi, \aa, \xi)$ represents a principally polarized abelian threefold $A$ with CM by $K$ of $\Phi$.

  \begin{enumerate}
    \item Determine a pair $(\aa_0, \xi_0)$ such that $(\xi_0) = (\aa_0 \overline{\aa_0} \Df_{K | \Q})^{-1}$. If the imaginary part of $\xi_0$ is positive for all embeddings in $\Phi$, then return $(\Phi, \aa_0, \xi_0)$. Otherwise, proceed to the next step.
    \item Run through the elements $\cc$ of $C$, and let $\gamma \in K_0$ be a generator of $\cc \overline{\cc}$.
    \item Within the previous loop, run through the elements $w$ of $W$, and consider $(\aa, \xi) = (\cc \aa_0, w \gamma^{-1} \xi_0)$. If $(\aa, \xi)$ admits $\Phi$ as a CM type, or in other words, if $\xi$ has positive imaginary part for the embeddings in $\Phi$, then return $(\Phi, \aa, \xi)$.
  \end{enumerate}
\end{algorithm}
\renewcommand{\theenumi}{\roman{enumi}}

\begin{proof}
  If the algorithm returns a triple, then it is correct by construction. It therefore remains to show that the algorithm does always give an output.

  First note that the existence of a triple $(\Phi, \aa, \xi)$ as in the Output step follows from Proposition \ref{prop:torsor}. Now suppose that we have determined a pair $(\aa_0, \xi_0)$ as in Step (1) of the algorithm. Then since both $\aa \aabar \Df_{K | \Q}^{-1}$ and $\aa_0 \aabar_0 \Df_{K | \Q}^{-1}$ are principal, and generated by totally imaginary elements of $K$, we have that the class of $\aa \aa_0^{-1}$ belongs to $G_1$. Let $\cc \in C$ be an element representing this class, and let $\gamma \in K_0$ be the chosen generator of $\cc \overline{\cc}$. We can then write $\aa = \delta \cc \aa_0$ with $\delta \in K^*$. Let $\bb = \cc \aa_0$. Then
  \begin{equation}
    (\delta \overline{\delta} \xi) = ((\delta \overline{\delta})^{-1} \aa \aabar \Df_{K | \Q})^{-1} = (\bb \bbbar \Df_{K | \Q})^{-1}
  \end{equation}
  and
  \begin{equation}
    (\gamma^{-1} \xi_0) = ((\cc \ccbar) \aa_0 \aabar_0 \Df_{K | \Q})^{-1} = (\bb \bbbar \Df_{K | \Q})^{-1},
  \end{equation}
  so since $\xi$ and $\xi_0$ are totally imaginary, we have $\delta \overline{\delta} \xi = u \gamma^{-1} \xi_0$ for a unit $u \in \ZZ_{K_0}^*$. Let $w \in W$ be a representative of the class corresponding to $u$. Then $(\cc \aa_0, w \gamma^{-1} \xi_0)$ has the property that the imaginary parts of $w \gamma^{-1} \xi_0$ has the same signs as $\delta \overline{\delta} \xi$, and hence as $\xi$. These are exactly the signs compatible with $\Phi$. Therefore since the algorithm encounters this triple as it runs, it is indeed guaranteed to return the requested output.
\end{proof}

\begin{remark}
  Finding a pair $(\aa_0, \xi_0)$ as in Step (1) of Algorithm \ref{alg:algo2} is possible by using the methods of \cite[Proposition 4.4]{streng-algorithm}: In fact the pair $(\aa_0, y z)$ in \loccit\ can be used.
\end{remark}

Given an initial triple $(\Phi, \aa, \xi)$ returned by Algorithm \ref{alg:algo2}, a full set of such triples (in the sense of Theorem \ref{thm:boundexp}) can be determined quickly by using the precomputed data from Algorithm \ref{alg:algo1}:

\renewcommand{\theenumi}{\arabic{enumi}}
\begin{algorithm}\label{alg:algo3}(Determining all triples $(\Phi, \aa, \xi)$)

  \textsc{Input:} A sextic CM field $K$ and a primitive CM type $\Phi$ of $K$.

  \textsc{Output:} A set $S$ of triples $(\Phi, \aa, \xi)$ as in Section \ref{sec:geometric_relevance}, so that $(\aa, \xi)$ represents a principally polarized abelian threefold $A$ that admits CM by $K$ of $\Phi$. Moreover, $S$ satisfies the following property: Up to Galois conjugation over the reflex field $K^r$, any pair $(\Phi, A)$, where $A$ is a principally polarized abelian threefold that admits CM by $\Z_K$, is isomorphic over $\C$ to an abelian variety corresponding to one of the elements of $S$.

  \begin{enumerate}
    \item Let $(\Phi, \aa_0, \xi_0)$ be the triple from Algorithm \ref{alg:algo2}.
    \item Run through the elements $\bb$ of $B$, and let $\beta \in K_0$ be a generator of $\bb \overline{\bb}$.
    \item Within the previous loop, run through the elements $v$ of $V$, and add $(\aa, \xi) = (\bb \aa_0, v \beta^{-1} \xi_0)$ to $S$.
    \item Return $S$ once the loops above have terminated.
  \end{enumerate}
\end{algorithm}
\renewcommand{\theenumi}{\roman{enumi}}

\begin{proof}
  The correctness of Algorithm \ref{alg:algo3} follows from Theorem \ref{thm:boundexp}.
\end{proof}

\begin{remark}\label{rem:factor}
  \hspace{0cm}
  \begin{enumerate}
    \item We do not claim that the given set $S$ is in actual bijection with the set of isomorphism classes of pairs $(\Phi, A)$ up to Galois conjugation, and indeed this is not the case in general. For our purposes it is enough to ensure that we obtain at least one triple $(\Phi, \aa, \xi)$ for each isomorphism class up to Galois conjugation, and we do not impose additionally that we have only a single triple for each Galois conjugacy class.

    \item As was shown in Equation \eqref{eq:strengsize}, the size of $\Mc_{\Z_K}(\Phi)$ is well approximated by $h (K) / h^+ (K_0)$. When $G = C_2^3 \rtimes S_3$, the largest size of the quotient $h (K) / h^+ (K_0)$ in \eqref{eq:strengsize} was $11287$, whereas the largest size of the group $Q = G_2 / e G_2$ from Theorem \ref{thm:boundexp} was $128$, thus showing the speed gain that our taking into account of Galois conjugacy provides.
  \end{enumerate}
\end{remark}

\renewcommand{\theenumi}{\arabic{enumi}}
\begin{algorithm}\label{alg:algo4}(Determining period matrices)

  \textsc{Input:} A sextic CM field $K$ and a primitive CM type $\Phi$ of $K$.

  \textsc{Output:} The small period matrices $\tau$ corresponding to the elements of the set $S$ in Algorithm \ref{alg:algo3}, sorted into two sets $T_H$ and $T_N$ that (heuristically) give rise to hyperelliptic and non-hyperelliptic curves, respectively.

  \begin{enumerate}
    \item Determine the set $S$ from Algorithm \ref{alg:algo3}, and initialize $T_H$ and $T_N$ to be empty sets.
    \item Let $(\Phi, \aa, \xi)$ be in $S$. Compute the corresponding principally polarized abelian threefold $(A, E)$ in the usual manner \cite[\S 4]{streng-algorithm}, setting $A = \CC^3 / \Phi (\aa)$ and letting $E$ be the $\R$-linear extension of the trace pairing $(\alpha, \beta) \mapsto \Tr_{K | \Q} (\xi \alpha \overline{\beta})$.
    \item Determine a Frobenius alternating form of $E$ to find some big period matrix $P \in M_{3,6} (\C)$ for $A$, and from it, a small period matrix $\tau \in M_{3,3} (\C)$.
    \item Reduce $\tau$ by using the methods from \cite[\S 2]{KLRRSS}.
    \item Use algorithms, for example those by Labrande \cite{labrande}, to determine whether $\tau$ has $1$ or $0$ vanishing even theta-null values to some high precision (typically 100 digits). In the former case, add $\tau$ to $T_H$; in the latter, add it to $T_N$.
  \end{enumerate}
\end{algorithm}
\renewcommand{\theenumi}{\roman{enumi}}

\begin{remark}
  \hspace{0cm}
  \begin{enumerate}
    \item Note that our algorithms differ from those in \cite{streng-algorithm}, as we fix our primitive CM type $\Phi$ throughout. When considering CM curves up to Galois conjugation, we are justified in doing so because of Corollary \ref{cor:trans}.

    \item Because we have ensured that $\Phi$ is a primitive CM type, the associated abelian threefolds are indeed Jacobians of genus-$3$ curves. The criterion for said Jacobian to be hyperelliptic in terms of even theta-null values is \cite[Lemmata 10 and 11]{igusa}.

    \item Like the minimization of representatives in Algorithm \ref{alg:algo1}, Step (4) of Algorithm \ref{alg:algo4} is essential to keep its running time short.

    \item Our own run of Algorithm \ref{alg:algo4} used the native \Magma\ implementation in Step (5) instead of the algorithms from \cite{labrande}. The even theta values were computed to 100 digits of precision, and decided to be numerically equal to zero when their absolute value is at most $10^{-50}$. Setting \texttt{Labrande := true} in the implementation at \cite{dis-github} allows for an alternative verification of these results using \cite{labrande} instead.

    \item Using interval arithmetic or the fast decay of the terms appearing in the sum that define an even theta-null value, it is in principle possible to verify rigorously whether such a value is non-zero, as it suffices to check that the sum of its initial terms is of sufficiently large absolute value. This allows one to prove that a ppav $A$ that Algorithm \ref{alg:algo4} suspects to be a non-hyperelliptic Jacobian is indeed such a Jacobian. By contrast, showing that $A$ comes from a hyperelliptic curve is more involved. For the moment, we see no other rigorous method to check this than to compute an equation for a corresponding curve $X$ as in Section \ref{sec:equations} and to show that $\Jac (X)$ has CM using the algorithms in \cite{cmsv-endos}. We discuss some further sanity checks in the next section.
  \end{enumerate}
\end{remark}

\subsection{Fields}\label{sec:fields}

With the algorithms from Section \ref{sec:algs} in hand, we considered the sextic CM fields in the LMFDB \cite{lmfdb}. We have applied our algorithms, implemented in \Magma\ \cite{magma} and available at \cite{dis-github}, to all of these $547,156$ fields, except for 2 fields with Galois group $D_6$ whose root discriminant exceeds $10^{12}$; for these, the calculation of the class and unit group did not finish in a timely fashion even when assuming the generalized Riemann hypothesis. Note that the list from the LMFDB contains the complete list of sextic CM fields of absolute discriminant at most $10^7$. The total computation required $4$ days on $20$ cores when working to relatively high precision to exclude rounding errors.

We list our results, which imply Main Results 1 and 2, in Table \ref{tab:results}. The first column of this table lists the possible Galois groups, which are as in Theorem \ref{thm:sexticCM}. Given such a group in the first column, the second column of the table indicates the number of CM fields $K$ in the database whose Galois group is isomorphic to the specified group. The third column indicates the number of such CM fields $K$ for which the set $T_H$ from Algorithm \ref{alg:algo4} is non-empty, or in other words the number of such CM fields $K$ for which there (heuristically) exists a hyperelliptic curve whose Jacobian has \emph{primitive} CM by $\Z_K$. For simplicity, we call such a CM field $K$ \defi{hyperelliptic}.

As was already known, and as can be deduced from the classification in \cite{LLRS}, if a CM field $K$ contains $\Q (i)$, then any curve whose Jacobian has primitive CM by $\Z_K$ is automatically hyperelliptic. We call a hyperelliptic CM field $K$ that does \emph{not} include $\Q (i)$ \defi{exceptional hyperelliptic}. The fourth column of Table \ref{tab:results} lists the number of exceptional hyperelliptic fields $K$ in the database whose Galois group is isomorphic to the group specified in the first column.

Finally, the fifth column of Table \ref{tab:results} indicates the number of fields with the specified Galois group for which there heuristically exists both a hyperelliptic and a non-hyperelliptic curve whose Jacobian has CM by $\Z_K$. For simplicity, we call such a sextic CM field $K$ \defi{mixed}. Note that all mixed fields are necessarily exceptional hyperelliptic, since by the previous paragraph no non-hyperelliptic CM curves can exist for $K$ if it contains $\Q (i)$.

\begin{table}[h]
  \begin{tabular}{|l|r|r|r|r|}
    \hline
    Galois group & $\# K$ & $\text{$\#$ hyp.\ $K$}$ & $\text{$\#$ exc.\ hyp.\ $K$}$ & $\text{$\#$ mixed $K$}$ \\
    \hline
    $C_6$                &  10,067 &   348 &  2 &  0 \\
    $D_6$                &  32,544 & 3,057 &  0 &  0 \\
    $C_2^3 \rtimes C_3$  &  10,159 &     0 &  0 &  0 \\
    $C_2^3 \rtimes S_3$  & 494,386 &    17 & 17 & 14 \\
    \hline
    Total                & 547,156 & 3,422 & 19 & 14 \\
    \hline
  \end{tabular}
  \caption{CM fields in the LMFDB}
  \label{tab:results}
\end{table}

\subsection{Invariants}\label{sec:invs}

Let $\tau \in M_{g,g} (\C)$ be a small period matrix. This section briefly reviews what is known on calculating and algebraizing the invariants of the curve $X$ associated to $\tau$, as well as verifying the correctness of the resulting curve.

Algorithm \ref{alg:algo4} shows that we can compute an approximation to $\tau$ to a given high precision, as all that we need to do is to determine the image of a basis of a (minimized) representative $\aa$ under the given CM type $\Phi$. What is considerably more complicated is to compute the even theta-null values associated to $\tau$. Here it is in general essential to use the more sophisticated algorithms by Labrande \cite{labrande} to keep the running time within reasonable bounds. While the available implementation of this algorithm does not always work properly, we still managed to get by in the cases that interested us, either by using the naive method from \cite{labrande} to lower precision or by determining the even theta-null values for only a single element of a given Galois orbit and conjugating afterwards in the next algebraization step.

Given the even theta-null values, we can determine a model of $X$ over $\C$ to the given precision, either by using the Rosenhain invariants as in \cite{BILV} or by using the Weber model from \cite{KLRRSS}. We can then compute a normalized weighted representative $I$ of the corresponding invariants (using the Shioda invariants in the hyperelliptic case and the Dixmier--Ohno invariants in the non-hyperelliptic case). The field of moduli of $X$ then coincides with the field generated by the entries of $I$.

\medskip
\emph{Algebraization.} It remains to algebraize the invariants $I$. A first possible method is the usual application of the LLL algorithm to determine putative minimal polynomials of the entries of $I$ over $\Q$ and thus to obtain $I$ as elements of a number field. One corresponding implementation is \texttt{NumberFieldExtra} in \cite{cms-git}. A second method is to symmetrize and use class polynomials, as in \cite{DI, engethome}. Both of these methods became prohibitive in the cases that we considered because of the large heights of the algebraic numbers that were involved. Indeed, one of the mixed fields, defined by the polynomial $x^6 - 2 x^5 + 11 x^4 + 42 x^3 - 11 x^2 + 340 x + 950$, gives rise to a tuple of normalized Dixmier--Ohno invariants whose first non-trivial entry $I_6$ has height $\approx 2.94 \cdot 10^{431}$, with $I_{27}$ having a height that is even larger by an exponential factor of about $27/6$. Another reason for us not to use the class polynomial method from \cite{engethome} is that this would necessitate later factorization to determine the Galois orbits, which is superfluous when algebraizing the individual $I$ directly.

Instead we exploited the fact that that the Shimura reciprocity law implies that the entries of $I$ are the complex embeddings of elements of Hilbert class field $H$ of the reflex field $K^r$. This replaces the problem of determining minimal polynomials to the more tractable one of trying to algebraize the elements of $I$ in $H$ or its subfields, which also reduces to an application of LLL (for example in the form of the routine \texttt{AlgebraizeElementsExtra} in \cite{cms-git}). In the aforementioned complicated case we needed $20,000$ digits of precision for our algebraization, but usually around $3,000$ digits were enough. Incidentally, note that while the reflex $K^r$ itself can be costly to determine via the usual Galois theory, since the closure $L$ becomes quite large, it can still be quickly recovered numerically as a subfield of $\C$, namely by applying the methods from the previous paragraph.

\medskip
\emph{Verification.} Once we have algebraized the elements of $I$, we have applied heuristic numerical methods twice, both in the determination of $I$ itself and in the algebraization of its elements. One may well ask why one should trust the algebraic invariant values thus obtained to be correct. Here are several reasons:
\begin{enumerate}
  \item For all algebraizations $I$ that we found, the resulting invariants satisfy the known algebraic dependencies between the Shioda invariants (which can be found in \cite{lrs-hyper}) or the Dixmier--Ohno invariants (which can be found in \cite{lrs-plane}). There is no reason whatsoever for this to hold in the case of incorrect or badly algebraized $I$.
  \item Reducing the values of $I$ modulo various large primes, one can apply the reconstruction algorithms from \cite{lr} or \cite{lrs} and then compute Weil polynomials to check that the resulting curves indeed have CM by an order in $K$ for all these primes.
  \item Conversely, one can directly calculate the set $S$ of primes of bad reduction from $I$ by using the criteria in~\cite{LLLR}, or compute a bound on primes of bad reduction and check $S$ against the set of primes found by running the algorithm in~\cite{win-new} up to the bound.
  \item In principle one can verify all results obtained over $\Qbar$ by using \cite{cmsv-endos}. That said, these algorithms still need substantial speedups for these verifications to be feasible for plane quartic curves over number fields.
\end{enumerate}
This is why we do not harbor any doubts about our results being correct, even though they are by no means mathematically rigorous yet.

\subsection{The mixed cases}

Table \ref{tab:mixed} describes the results for the $17$ fields $K$ from Table \ref{tab:results} with Galois group $C_2^3 \rtimes S_3$ that are exceptional. Note that there are also $2$ exceptional hyperelliptic fields with Galois group $C_6$, but these were already considered in \cite{resnt}: Corresponding polynomials are given by $x^6 - x^5 + x^4 - x^3 + x^2 - x + 1$ and $x^6 - 14 x^3 + 63 x^2 + 168 x + 161$.

The first column of Table \ref{tab:mixed} gives the polynomial defining the CM field $K$; this column is sorted by the absolute discriminant of the ring of integers $\Z_K$. The second column describes the length of the various hyperelliptic Galois orbits under conjugation by $\Gal (\Qbar \ext \Q)$; for example, an entry $4^2 8^1$ stands for $2$ Galois orbits of length $4$ along with single Galois orbit of length $8$. Similarly, the third column describes the length of the non-hyperelliptic Galois orbits under $\Gal (\Qbar \ext \Q)$. An empty entry means that there does not exist such a curve for the field $K$. Note that Corollary \ref{cor:fom} shows why the length of the Galois orbits in the table are all a multiple of $4$. The final column describes the quotient $\Cc_K / \im (\Nc_{\Phi})$ of the Shimura class group by the image of the reflex type norm. Note that this independent of the chosen primitive CM type $\Phi$ because of Proposition \ref{prop:conjtype} and Corollary \ref{cor:trans}.

The invariants obtained for the fields in Table \ref{tab:mixed} are available at \cite{dis-github}. As mentioned above, they are occasionally on the gargantuan side.

\begin{table}[h]\label{table:orbits}
  \begin{tabular}{|l|r|r|c|}
    \hline
    CM field & hyp.\ orbits & non-hyp.\ orbits & $\Cc_K / \im (\Nc_{\Phi})$ \\
    \hline
    $x^6 + 10 x^4 + 21 x^2 + 4$ & $4^1$ & $4^1$ & $\Z / 2 \Z$ \\
    $x^6 - 3 x^5 + 14 x^4 - 23 x^3 + 28 x^2 - 17 x + 4$ & $4^1$ & $4^1$ & $\Z / 2 \Z$  \\
    $x^6 - 2 x^5 + 12 x^4 - 31 x^3 + 59 x^2 - 117 x + 121$ & $4^1$ & $4^1 8^1$ & $\Z / 4 \Z$ \\
    $x^6 + 14 x^4 + 43 x^2 + 36$ & $4^1$ & & 1 \\
    $x^6 - 3 x^5 + 9 x^4 + 4 x^3 + 12 x^2 + 84 x + 236$ & $4^1$ & $4^1 8^1$ & $\Z / 4 \Z$ \\
    $x^6 - 2 x^5 + x^4 - 4 x^3 + 5 x^2 - 50 x + 125$ & $4^1$ & $4^3$ & $(\Z / 2 \Z)^2$ \\
    $x^6 + 29 x^4 + 246 x^2 + 512$ & $4^1$ & & 1 \\
    $x^6 - 3 x^5 + 10 x^4 + 8 x^3 + x^2 + 90 x + 236$ & $4^1$ & $4^1$ & $\Z / 2 \Z$ \\
    $x^6 + 21 x^4 + 60 x^2 + 4$ & $4^1$ & $4^1$ & $\Z / 2 \Z$ \\
    $x^6 + 30 x^4 + 169 x^2 + 200$ & $4^1$ & $4^1$ & $\Z / 2 \Z$ \\
    $x^6 + 23 x^4 + 112 x^2 + 36$ & $4^1$ & & 1 \\
    $x^6 - 2 x^5 + 12 x^4 - 44 x^3 + 242 x^2 - 672 x + 1224$ & $12^1$ & $12^3$ & $(\Z / 2 \Z)^2$ \\
    $x^6 + 26 x^4 + 177 x^2 + 128$ & $4^1$ & $4^1$ & $\Z / 2 \Z$ \\
    $x^6 + 29 x^4 + 226 x^2 + 252$ & $4^1$ & $4^1 8^1$ & $\Z / 4 \Z$ \\
    $x^6 - 2 x^5 - 7 x^4 + 45 x^3 - 63 x^2 - 162 x + 729$ & $4^1$ & $4^1$ & $\Z / 2 \Z$ \\
    $x^6 - 2 x^5 + 11 x^4 + 42 x^3 - 11 x^2 + 340 x + 950$ & $8^1$ & $8^1 16^1$ & $\Z / 4 \Z$ \\
    $x^6 - 3 x^5 + 29 x^4 - 53 x^3 + 200 x^2 - 174 x + 71$ & $4^1$ & $4^1$ & $\Z / 2 \Z$ \\
    \hline
  \end{tabular}
  \caption{Generic hyperelliptic and mixed fields with Galois group $C_2^3 \rtimes S_3$ and the lengths of the corresponding Galois orbits}
  \label{tab:mixed}
\end{table}

\section{Explicit defining equations}\label{sec:equations}

In this section we further consider the mixed CM field $K$ defined by $x^6 + 10 x^4 + 21 x^2 + 4$, which corresponds to the first entry of Table \ref{tab:mixed}. Our goal is to indicate how to obtain the (heuristic) explicit defining equations from Main Result 3. The actual calculations are performed in \cite{dis-github}; here we briefly explain the ideas that underlie them.

There are two Galois orbits in this case, one containing $4$ hyperelliptic curves, and one containing $4$ non-hyperelliptic curves. Moreover, Corollary \ref{cor:trans} shows that once we fix a CM type $\Phi$, which we do throughout this section, there is exactly one corresponding hyperelliptic curve $X$ and one non-hyperelliptic curve $Y$. We start by finding an equation for $X$.

\subsection{Hyperelliptic simplification}\label{sec:hypsimp}

As in Section \ref{sec:invs}, we determine a normalized tuple $S$ of Shioda invariants corresponding to the curve $X$, which is defined over the quartic field $L$ defined by the polynomial $x^4 - 5 x^2 - 2 x + 1$. The field $L$ is in fact the totally real subfield of the reflex field $K^r$ of $K$.

One can try to apply the generic reconstruction algorithms in genus $3$ that are available in \Magma, but this turns out not to be optimal, as the resulting hyperelliptic curve is returned over a random quadratic extension of $L$ with large defining coefficients. Instead, we directly construct the Mestre conic and quartic $Q$ and $H$ over $K$ from the invariants $S$, as in \cite{lr}, and then check whether the conic $Q$ admits a rational point. This turns out to be the case. Choosing a parametrization $\P^1 \to Q$ over $K$ and pulling back the divisor $Q \cap H$ on $Q$, we obtain a degree-8 divisor on $\P^1$ that corresponds to a monic octic polynomial $f$ with the property that
\begin{equation}\label{eq:X}
  X : y^2 = f
\end{equation}
is a curve with CM by $\Z_K$. This is still far from satisfactory, however, as the coefficients of $f$ are extremely large, namely of height up to $4.92 \cdot 10^{1126}$. We show how to obtain a simpler equation. Our approach is essentially ad hoc; while there are minimization and reduction algorithms in \Magma\ over the rationals due to Cremona--Stoll \cite{cremona-stoll}, and over real quadratic number fields due to Bouyer--Streng \cite{bouyer-streng}, we do not find ourselves in one of these cases, so that we are forced to use other methods.

\medskip
The octic polynomial $f$ factors as
\begin{equation}
  f = f_1 f_2 f_3,
\end{equation}
where $f_1$ and $f_2$ are quadratic, both defined over a pleasant quadratic extension $M$ of $L$ with defining polynomial $x^8 - 4 x^7 + 10 x^5 + 7 x^4 - 10 x^3 - 18 x^2 - 6 x + 1$ over the rationals. (That this extension is so agreeable is of course no surprise; the extended version of the Main Theorem of complex multiplication, applied to the $2$-torsion of $\Jac (X)$, shows that we should expect it to be related to the Hilbert class field of $L$ ramifying at its single even prime.)

We now consider $f$ over the quadratic extension $M$ of $L$, over which field we will construct a simpler polynomial defining the same hyperelliptic curve, which we will then descend back to $L$. To start our simplification over $M$, we apply a Möbius transformation in the $x$-coordinate that sends the roots of $f_1$ to $0$ and $\infty$ and one of the roots of $f_2$ to $1$. This maps the divisor defined by the octic polynomial $f$ to that defined by a \emph{septic} polynomial $g$ that additionally satisfies $g (0) = g (1) = 0$. We normalize $g$ in such a way that the coefficient of $x^4$ equals $1$, for reasons that will become clear, so that
\begin{equation}\label{eq:g}
  g = c_7 x^7 + c_6 x^6 + c_5 x^5 + x^4 + c_3 x^3 + c_2 x^2 + c_1 x .
\end{equation}
At this point the maximal height of the coefficients of $g$ is $5.51 \cdot 10^{17}$, which is already quite a bit smaller than $4.92 \cdot 10^{1126}$. Now inspecting the norms of the coefficients $c_i$ shows that we have
\begin{equation}
  (c_5) = \pp_2^{-4} (\sigma (c_3)),
\end{equation}
where $\pp_2$ is the unique ideal of $\Z_L$ above $2$ and where $\sigma$ is the involution that generates $\Gal (M \ext L)$. Following a hunch, we scale $x$ by $\alpha^2$, where $\alpha$ generates $\pp_2$. Transforming $g$ accordingly, we obtain an equality of ideals
\begin{equation}\label{eq:switchid}
  (c_i) = (\sigma (c_{8 - i}))
\end{equation}
for all $i$ between $1$ and $4$.

Our goal is to make Equation \eqref{eq:switchid} hold on the level of elements, and not merely between ideals. To achieve this, we consider the unit $u = c_5 / \sigma (c_3) \in \Z_M^*$. Consider the polynomial $h$ obtained from $g$ by scaling $x$ by $v x$, where $v \in \Z_M^*$ is another unit, and normalizing the coefficient of $x^4$ to equal $1$. Then for the coefficients $d_i$ of $h$ we have $d_5 = v c_5$ and $d_3 = v^{-1} c_3$. The equality of elements $d_5 = \sigma (d_3)$ that we are looking for can be rewritten as
\begin{equation}
  v u \sigma (c_3) = v c_5 = d_5 = \sigma (d_3) = \sigma (v^{-1}) \sigma (c_3)
\end{equation}
This is the case if and only if
\begin{equation}\label{eq:uniteq}
  u = v \sigma (v) .
\end{equation}
Since $\Z_M^* \cong \Z / 2 \Z \times \Z^5$ and $v$ satisfies this equality if and only of $-v$ does, the equality \eqref{eq:uniteq} reduces to integral linear algebra once the representation of $\sigma$ in terms of a given basis of $\Z_K^*$ is known. Performing the corresponding computation shows that we can indeed find a $v$ with the requested properties. Scaling $x$ accordingly, we find a polynomial $g$ as in \eqref{eq:g} such that
\begin{equation}\label{eq:switchelt}
  c_i = \sigma (c_{8 - i})
\end{equation}
is satisfied for all $i = 1, \ldots, 4$.

At this point, our manipulations have lead to a polynomial $g$ with coefficients of maximal height $8.64 \cdot 10^{16}$. We can still do a bit better by further scaling $x$ by appropriate units $v$ satisfying $v \sigma (v) = 1$. This does not affect the property \eqref{eq:switchelt}. Our goal is to make $c_5$ as small as possible as an element of the Minkowski lattice up to shifts by units of the indicated type. This reduces to a closest vector problem as in Remark \ref{rem:algo1}, an approximation for which is quickly found by means of the usual techniques. Applying the corresponding scaling again yields a polynomial $g$ with coefficients of height $1.11 \cdot 10^{16}$.

\medskip
It now remains to descend our polynomial $g$ with coefficients in $M$ to the original field $L$. For this, let $\sigma$ be the involution that generates the Galois group $\Gal (M \ext L)$, and let $B \in \GL_2 (M)$ be such that
\begin{equation}
  \text{$\sigma (B) = A B$, \qquad where $A = \begin{pmatrix} 0 & 1 \\ 1 & 0 \end{pmatrix}$} .
\end{equation}
For example, we can take
\begin{equation}
  B = \begin{pmatrix}
    1 &         \alpha \\
    1 & \sigma (\alpha)
  \end{pmatrix}
\end{equation}
where $\alpha \in M$ is such that $M = L \oplus L \alpha$. The matrix $B^{-1}$ induces a Möbius transformation of the projective line.

\begin{proposition}
  Let $D = (g)_0 \cup \left\{ \infty \right\} \subset \P^1$, where $(g)_0$ is the divisor of zeros of the polynomial $g$, and let $E_0 = B^{-1} (D)$.
  \begin{enumerate}
    \item The divisor $E_0 \subset \P^1$ is defined over $L$.
    \item Let $f_0$ be a polynomial whose divisor of zeros is given by $E_0$. Then the hyperelliptic curve $X_0 : y^2 = f_0$ over $L$ is isomorphic over $\Qbar$ to the original curve $X$ in \eqref{eq:X}.
  \end{enumerate}
\end{proposition}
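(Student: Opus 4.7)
The plan is to establish (1) by explicit Galois descent and then derive (2) by tracking hyperelliptic covers through their branch divisors. For (1), I would first unwind the definition of $E_0 = B^{-1}(D)$ to compute
\[
\sigma(E_0) \;=\; \sigma(B)^{-1}(\sigma(D)) \;=\; (AB)^{-1}(\sigma(D)) \;=\; B^{-1}\bigl(A(\sigma(D))\bigr),
\]
where I use that $A^{-1} = A$. Thus $\sigma(E_0) = E_0$ is equivalent to the purely combinatorial claim $\sigma(D) = A(D)$ as divisors on $\P^1_M$, with $A$ acting by the Möbius involution $x \mapsto 1/x$.

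The key step is to translate the coefficient symmetry \eqref{eq:switchelt} (together with $c_0 = 0$ and $c_4 = 1$) into the functional equation $\sigma(g)(x) = x^8 g(1/x)$. Indeed, reindexing $\sigma(g)(x) = \sum_{i=1}^{7} \sigma(c_i)\, x^i$ via $\sigma(c_i) = c_{8-i}$ yields $\sum_{j=1}^{7} c_j\, x^{8-j}$, which is exactly $x^8 g(1/x)$. Reading off the zeros, this functional equation says that the roots of $\sigma(g)$ are precisely $0$ together with $1/\alpha$ for every nonzero root $\alpha$ of $g$. Comparing the projective divisor $\sigma(D) = (\sigma g)_0 + [\infty]$ term by term with $A(D) = A\bigl((g)_0 + [\infty]\bigr)$ then gives $\sigma(D) = A(D)$, which proves (1).

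For (2), the hyperelliptic double cover $X_0 \to \P^1_L$ has branch divisor $E_0$ by construction, and after base change to $M$ the Möbius transformation induced by $B$ identifies $E_0 = B^{-1}(D)$ with $D$. Pulling back the hyperelliptic cover $y^2 = g(x)$ along $B$ and absorbing the resulting Jacobian factor into a rescaling of the $y$-coordinate produces an $M$-isomorphism $X_0 \otimes_L M \cong \{y^2 = g(x)\}$. The curve $y^2 = g(x)$ is in turn isomorphic over $M$ to $X \otimes_L M$ through the initial normalization Möbius transformation and the coordinate scalings from Section~\ref{sec:hypsimp}, so composing the two isomorphisms yields $X_0 \cong X$ over $\Qbar$. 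The main bookkeeping subtlety I anticipate is that for the explicit $B$ chosen we have $B \cdot [1{:}0] = 1 \in D$, so $\infty \in E_0$; consequently a polynomial $f_0$ with the stated zero divisor has degree $7$ rather than $8$ and $y^2 = f_0$ is ramified at infinity, which is harmless but deserves to be acknowledged so that the cover-versus-polynomial dictionary remains unambiguous.
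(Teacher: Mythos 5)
Your proof is correct and follows essentially the same route as the paper: the Galois-invariance computation $\sigma(E_0) = B^{-1}A^{-1}\sigma(D)$ combined with $\sigma(D) = A(D)$ for (1), and the fact that Möbius-equivalent branch loci yield $\Qbar$-isomorphic hyperelliptic curves for (2); you merely make explicit, via the functional equation $\sigma(g)(x) = x^8\, g(1/x)$, the key step $\sigma(D) = A(D)$ that the paper leaves implicit in its chain of equalities. Your closing aside about $\infty \in E_0$ is harmless but inessential (and the specific claim that $1 \in D$ need not survive the later rescalings of $x$ by $\alpha^2$ and by units; indeed the published equation for $X$ in Main Result 3 has a degree-$8$ right-hand side).
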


\begin{proof}
  (i): The divisor $E_0$ is defined over $M$, as $B$ and $D$ are. Moreover, we have
  \begin{equation}
    \sigma (E_0) = \sigma (B^{-1} D) = \sigma (B)^{-1} \sigma (D) = B^{-1} A^{-1} A D = B^{-1} D = E_0
  \end{equation}
  This Galois invariance implies our claim.

  (ii) This follows from (i) because two hyperelliptic curves are $\Qbar$-isomorphic if (and only if) the corresponding branch loci are related by a Möbius transformation.
\end{proof}

Alternatively, $f_0$ is the numerator of the transform of $g$ by $B^{-1}$. This turns out to be still of reasonable size when $\alpha$ is. Replacing $X$ be $X_0$, we have achieved our aim of simplifying $X$. The result is the equation for $X$ in Main Result 3. The discriminant of the corresponding hyperelliptic polynomial equals $\pp_4^{120} \pp_7^{12}$, where $\pp_4$ (resp.\ $\pp_7$) is an ideal of norm $4$ (resp.\ $7$).

\subsection{A plane quartic equation}

It remains to construct a plane quartic model for the non-hyperelliptic curve $Y$ from the knowledge of its Dixmier--Ohno invariants $I$. The direct methods from \cite{lrs} gives a ternary quartic with coefficients whose size is beyond hopeless. Methods to obtain defining equations of smaller size were sketched in \cite[\S 3]{KLRRSS}, using methods due to Elsenhans and Stoll \cite{elsenhans, stoll}, yet like the methods of Cremona--Stoll in Section \ref{sec:hypsimp}, these are specific to the base field $\Q$, and therefore of no use in the current situation.

Fortunately, now that we have found the equation for the hyperelliptic curve $X$ in Main Result 3, determining the equation for the non-hyperelliptic curve $Y$ becomes tractable. To see this, let $P_X \in M_{3,6} (\C)$ be a big period matrix of $X$ with respect to the canonical basis of differentials $\left\{ dx / y, x dx / y, x^2 dx / y \right\}$ corresponding to the equation \eqref{eq:Xsmall}, and let $P_Y$ be the large period matrix of the Weber model $Y : F (x, y, z) = 0$ over $\C$ for $Y$ obtained in the course of using Algorithm \ref{alg:algo4}. This matrix, and all other big period matrices that follow, should be taken with respect to the canonical basis of differentials $\left( x dx (\partial F / \partial y)^{-1}, y dx (\partial F / \partial y)^{-1}, dx (\partial F / \partial y)^{-1} \right)$.

\begin{proposition}
  There exist matrices $T \in M_{3,3} (\C)$ and $R \in M_{6,6} (\Z)$ such that $R$ has determinant $2$ and
  \begin{equation}
    T P_Y = P_X R .
  \end{equation}
  Moreover, the pair $(T, R)$ is uniquely determined up to a minus sign.
\end{proposition}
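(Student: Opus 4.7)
The plan is to use the CM theory developed earlier to produce a concrete isogeny, read off its matrix representation, and handle uniqueness through the rigidity of polarized CM abelian threefolds.

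\textbf{Existence.} First I would invoke Proposition \ref{prop:torsor}: both $\Jac X$ and $\Jac Y$ sit in the $\Cc_K$-torsor $\Mc_{\Z_K}(\Phi)$ for the fixed primitive CM type $\Phi$. According to Table \ref{tab:mixed}, the quotient $\Cc_K/\im(\Nc_{\Phi^r})$ has order $2$ for this $K$, and the Galois orbits of $\Jac X$ and $\Jac Y$ occupy the two distinct cosets. A short class-group computation in $\Z_K$ then produces a prime ideal $\pp$ of norm $2$ representing the non-trivial coset. Writing $\Jac X \simeq A(\aa_X, \xi_X)$ and $\Jac Y \simeq A(\pp^{-1} \aa_X, \beta \xi_X)$ for a totally positive $\beta \in K_0$ with $\pp \overline{\pp} = (\beta)$, the lattice inclusion $\Phi(\aa_X) \subset \Phi(\pp^{-1} \aa_X)$ gives an isogeny $\Jac X \to \Jac Y$ of degree $N(\pp) = 2$, and I take $\psi \colon \Jac Y \to \Jac X$ to be its polarized dual, likewise of degree $2$.

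\textbf{Matrix form and $\det R = 2$.} Next I would translate $\psi$ into matrices. Its action on tangent spaces, expressed in the chosen bases of differentials, is $T \in M_{3,3}(\C)$; its action on integral homology, expressed in the bases underlying $P_X$ and $P_Y$, is $R \in M_{6,6}(\Z)$. The compatibility of the uniformization $\C^3 \twoheadrightarrow \Jac(\cdot)$ with $\psi$ yields $T P_Y = P_X R$ at once. The degree of $\psi$ equals $|\det R|$, so $|\det R| = 2$, and the sign is positive because any holomorphic map is orientation-preserving on integral homology.

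\textbf{Uniqueness.} Given a second valid pair $(T', R')$ corresponding to an isogeny $\psi'$, I use the fact that $\Hom(\Jac Y, \Jac X)$ is a rank-$1$ $\Z_K$-module (both Jacobians having CM by $\Z_K$ of the same type) to write $\psi' = \psi \circ [u]$ for some $u \in K$. The degree constraint forces $u \in \Z_K^*$ with $|N_{K|\Q}(u)| = 1$, which is automatic for units of a CM field. The additional rigidification comes from the compatibility of $\psi$ with the principal polarizations, which forces $u \overline{u} = 1$, cutting $\Z_K^*$ down to the finite subgroup $\mu(K)$ of roots of unity. Since the Galois group $C_2^3 \rtimes S_3$ of $K$ forbids any roots of unity beyond $\pm 1$, we conclude $u \in \{\pm 1\}$, giving $(T', R') = \pm (T, R)$.

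\textbf{Main obstacle.} The delicate step is uniqueness: the rank-$2$ unit group $\Z_K^*$ could in principle produce many degree-$2$ elements of the Hom module, so the crux of the argument is making precise how compatibility with the principal polarizations restricts the multiplicative ambiguity to the finite group $\mu(K) = \{\pm 1\}$. Existence and the identification $\det R = +2$ are then direct consequences of the CM construction plus orientation considerations.
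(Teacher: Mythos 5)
Your existence argument follows the same route as the paper's: both locate the isogeny in the nontrivial class of $\Cc_K/\im(\Nc)\cong \Z/2\Z$ and realize it by an ideal of norm $2$, and your remark that $\det R=+2$ (rather than $\pm 2$) because a $\C$-linear map has positive real determinant is a point the paper leaves implicit. The divergence is in the uniqueness step, and there your argument has a genuine gap. First, the reduction ``$\psi'=\psi\circ[u]$ with $u$ forced into $\Z_K^*$ by the degree'' is not correct as stated. Writing $\Hom(\Jac(Y),\Jac(X))$ as the fractional ideal $\cc=\aa_X\aa_Y^{-1}$, a degree-$2$ element $\gamma\in\cc$ is one with $(\gamma)=\cc\,\mathfrak{d}$ for an \emph{integral} ideal $\mathfrak{d}$ of norm $2$; the ratio of two such elements generates $\mathfrak{d}'\mathfrak{d}^{-1}$, an ideal of norm $1$ that need not be trivial, because $(2)=\aa^4\bb^2$ has \emph{two} distinct norm-$2$ primes. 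Ruling out the second prime is precisely the content of the paper's uniqueness argument ($\aa$ is the only norm-$2$ ideal mapping to the nontrivial class of $\Cc_K/\im(\Nc)$), and this ingredient is absent from your proof.

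Second, even after reducing to $u\in\Z_K^*$, your appeal to ``compatibility with the principal polarizations'' to force $u\overline{u}=1$ imports a hypothesis that is not in the statement: the proposition only requires $T'P_Y=P_XR'$ with $\det R'=2$, and composing a valid $\psi$ with $[u]\in\Aut(\Jac(Y))$ for \emph{any} of the infinitely many units $u\in\Z_K^*$ (the unit rank is $2$ here) produces another such pair, since the homology matrix of $[u]$ has determinant $N_{K|\Q}(u)=1$. Unless one reads a symplectic condition such as $R^{\mathrm{t}}J_XR=2J_Y$ into the choice of Frobenius bases --- which would indeed give $u\overline{u}=1$ and hence $u=\pm1$, as you say --- the uniqueness claim does not follow from your argument. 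In fairness, the paper's own one-line uniqueness proof is silent on the unit ambiguity as well, and you do correctly identify this polarization compatibility as the crux in your closing paragraph; but identifying the crux is not the same as closing it, and as written the uniqueness part of your proof does not apply to an arbitrary competing pair $(T',R')$ satisfying only the stated hypotheses.
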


\begin{proof}
  This is a direct consequence of the fact that $X$ and $Y$ are related by an $\aa$-transformation with $N_{K | \Q} (\aa) = 2$. In turn, this statement follows from the fact (see Table \ref{tab:mixed}) that $\Cc_K / \im (\Nc) \cong \Z / 2 \Z$, and that if we factor $(2) = \aa^4 \bb^2$ in $\Z_K$, with $N_{K | \Q} (\aa) = N_{K | \Q} (\bb) = 2$, the ideal $\aa$ represents the non-trivial class in this quotient, which therefore induces an isogeny between the two distinct ppavs with CM by $K$ of a fixed type $\Phi$. The uniqueness claim follows from the fact that $\aa$ is the only ideal of norm $2$ that gives rise to a non-trivial class in $\Cc_K / \im (\Nc)$.
\end{proof}

In what follows, given a matrix $T \in M_{3,3} (\C)$ and a ternary quartic form $F \in \C [x,y,z]$, we denote the transformation of $F$ under the natural right action of $T$ by $F \cdot T$.

\begin{proposition}
  Let $F$ be the ternary quartic form associated to the Weber model whose big period matrix is $P_Y$, and $F_0$ be a multiple of $F \cdot T^{-1}$ that is normalized in such a way that one of its coefficients is in $L$. Then $Y_0 : F_0 (x, y, z) = 0$ is a model of $Y$ over $L$.
\end{proposition}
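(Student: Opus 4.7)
The plan is to prove that $Y_0$ is a model of $Y$ over $L$ in two steps: first that $Y_0$ is $\Qbar$-isomorphic to $Y$ as a smooth plane quartic, and second that the coefficients of $F_0$ actually lie in $L$. For the first step, $T \in \GL_3(\C)$ induces a linear automorphism of $\P^2$ that carries the zero locus of $F$, which is $Y$, to the zero locus of $F \cdot T^{-1}$, which is $Y_0$; scaling by a nonzero constant does not affect this zero locus. So $Y_0$ is already $\C$-isomorphic to $Y$, and the real content is the descent statement.

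For the descent, I would exploit the uniqueness up to sign of the pair $(T, R)$ from the preceding proposition. Fix $\sigma \in \Gal(\Qbar/L)$, extended to $\Aut(\C/L)$. Applying $\sigma$ to both sides of $T P_Y = P_X R$ and using $\sigma(R) = R$ by integrality yields $\sigma(T) \sigma(P_Y) = \sigma(P_X) R$. Because $X$ is defined over $L$ by \eqref{eq:Xsmall} and the basis of holomorphic differentials $\{dx/y,\,x\,dx/y,\,x^2\,dx/y\}$ is $L$-rational, the Galois action on $H_1(X, \Z)$ produces $S_\sigma \in \Sp_6(\Z)$ with $\sigma(P_X) = P_X S_\sigma$, and likewise for $Y$ once we have fixed an $L$-rational model and an $L$-rational differential basis on it (see the next paragraph), giving $\sigma(P_Y) = P_Y S'_\sigma$. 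The conjugated equation then reads
\begin{equation*}
  \sigma(T)\, P_Y \;=\; P_X\, (S_\sigma R (S'_\sigma)^{-1}),
\end{equation*}
and $S_\sigma R (S'_\sigma)^{-1}$ is an integer matrix of determinant $\pm 2$. Thus $(\sigma(T),\, S_\sigma R (S'_\sigma)^{-1})$ is another pair of the form produced by the preceding proposition, so by uniqueness up to sign, $\sigma(T) = \pm T$. Since a ternary quartic is invariant under $(x,y,z) \mapsto (-x,-y,-z)$, both signs give $F \cdot (\pm T)^{-1} = F \cdot T^{-1}$, and therefore $\sigma(F \cdot T^{-1}) = \sigma(F) \cdot T^{-1}$.

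The remaining ingredient is that $\sigma(F) = \lambda_\sigma F$ for some $\lambda_\sigma \in \C^*$, i.e., that $Y$ descends to $L$ as a plane quartic with a Weber-model equation defined over $L$ (which is also what justifies the identity $\sigma(P_Y) = P_Y S'_\sigma$ used above). This is supported by Table \ref{tab:mixed}, which records a single non-hyperelliptic $\Gal(\Qbar/\Q)$-orbit of length $4$; since $[L:\Q] = 4$, one need only check that $\Gal(\Qbar/L)$ lies in the stabilizer of the $\Qbar$-isomorphism class of $Y$, and this follows from Theorem \ref{thm:srecip} combined with $\Cc_K/\im(\Nc_{\Phi^r}) \cong \Z/2\Z$ as recorded in the same table. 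Putting everything together, $\sigma(F_0) = \lambda_\sigma F_0$ for all $\sigma \in \Gal(\Qbar/L)$; the normalization fixing one coefficient of $F_0$ in $L$ forces $\lambda_\sigma = 1$, so $F_0 \in L[x,y,z]$. The chief obstacle is the transcendental bookkeeping around period matrices, particularly the claim $\sigma(P_Y) = P_Y S'_\sigma$, which requires genuinely fixing an $L$-rational differential basis on an $L$-model of $Y$ before invoking uniqueness; once this is set up cleanly, the rest of the argument is formal.
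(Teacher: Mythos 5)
Your overall shape — exploit the uniqueness of $(T,R)$ up to sign, show $\sigma(T)=\pm T$, and let the normalization kill the remaining scalar — matches the paper's strategy, but two of your key steps do not hold as stated, and you flag but do not repair the second one.

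First, the claim $\sigma(F)=\lambda_\sigma F$ for the Weber model $F$ is not justified and is in general false. Knowing that the $\Qbar$-isomorphism class of $Y$ is fixed by $\Gal(\Qbar\ext L)$ only says that $L$ contains the field of moduli; it says nothing about the specific equation $F$, whose coefficients are built from theta constants and are \emph{not} projectively $L$-rational — if they were, the proposition would be vacuous, since $F$ suitably normalized would already be a model over $L$ and no twist by $T^{-1}$ would be needed. What the paper does instead is first produce an auxiliary plane quartic $Z$ over $L$ isomorphic to $Y$, and the passage from field of moduli to field of definition is not free: it uses that $\Aut(Y)$ is trivial, which in turn is deduced from the fact that the torsion of $\Z_K^*$ is $\langle -1\rangle$ together with $\Aut(Y)=\Aut(\Jac(Y))/\langle -1\rangle$. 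This step is entirely absent from your argument, and without it the descent of the isomorphism class to an actual $L$-rational equation is not available.

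Second, the identity $\sigma(P_X)=P_X S_\sigma$ with $S_\sigma\in\Sp_6(\Z)$ is false for a general (discontinuous) automorphism $\sigma$ of $\C$: periods of a curve over $L$ are transcendental and the Betti--de Rham comparison is not Galois-equivariant, so you cannot conjugate the transcendental relation $TP_Y=P_XR$ entrywise. The paper sidesteps this by working with the auxiliary $L$-model $Z$: the degree-$2$ isogeny between $\Jac(Z)$ and $\Jac(X)$ is unique up to sign, hence Galois-stable up to sign as an algebraic map, hence its tangent matrix $U$ with respect to $L$-rational bases of differentials lies in $M_3(\Qbar)$ and satisfies $\sigma(U)=\pm U$; the relation $UP_Z=P_XR$ is merely the analytic shadow of this algebraic statement. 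One then compares $F\cdot T^{-1}$ with $G\cdot U^{-1}$ (both having big period matrix a scalar multiple of $P_XR$) to conclude they agree up to scalar, and the normalization finishes the proof. To repair your argument you would need to introduce this $L$-rational model and rerun the uniqueness argument on the algebraic side rather than on the period matrices.
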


\begin{proof}
  We know that $Y$ has field of moduli equal to $L$. Now since the torsion subgroup of $\Z_K^*$ is generated by $\langle -1 \rangle$, the automorphism group $\Aut (Y)$ is trivial, since $\Aut (Y) = \Aut (\Jac (Y)) / \langle -1 \rangle$ for plane quartic curves $Y$. Therefore there exists a plane quartic curve $Z \subset \P^2$ defined over $L$ that is isomorphic to $Y$. Let $G$ be a corresponding form, and let $P_Z$ be a corresponding period matrix. The same argument as above shows that there exists a matrix $U \in M_{3,3} (\C)$ such that
  \begin{equation}
    U P_Z = P_X R .
  \end{equation}
  Because both $X$ and $Z$ are defined over $L$, the uniqueness of $R$ up to sign implies that $U \in M_3 (\Qbar)$ and $\sigma (U) = \pm U$ for all $\sigma \in \Gal (\Qbar \ext \Q)$. Now let $G_0 = G \cdot U^{-1}$, normalized in such a way that one of its coefficients is in $L$. Since $\sigma (G \cdot U^{-1}) = \sigma (G) \cdot \sigma (U^{-1}) = G \cdot \pm U^{-1}$, we have that the class of $G \cdot U$ up to scalar is Galois stable. Therefore $G_0$ is defined over $L$, and its big period matrix is a scalar multiple of $U P_Z = P_X R$. On the other hand, the ternary quartic $F \cdot T^{-1}$ also has a big period matrix that is a scalar multiple of $T P_Y = P_X R$. Therefore $F_0$ and $G_0$ coincide up to a scalar, and because of our normalization $F_0$ has coefficients in $L$ as well.
\end{proof}

An algebraization in the field $L$ using LLL shows that we can indeed recover the coefficients of the ternary quartic form $F_0$ defining $Y_0$ over $K$. Tweaking its size by scaling $x, y, z$ by units (similar to the closest vector considerations in Section \ref{sec:hypsimp}) makes the equation of $Y_0$ somewhat smaller still. Replacing $Y$ by $Y_0$ gives the equation for $Y$ in Main Result 3. Its discriminant factors as $\pp_4^{312} \pp_7^{36} \pp_{19}^{14} \pp_{277}^{14} \pp_{1753}^{14}$, where as before subscripts indicate norms.

\begin{remark}
  We emphasize once more that the equations obtained in this section have not yet been verified by the methods from \cite{cmsv-endos} because of the considerable effort required to run these algorithms over large number fields.
\end{remark}

\section{Around the André--Oort conjecture}\label{sec:andreoort}

\subsection{General considerations}

In this section, we review a certain number of results around the André--Oort conjecture. The André--Oort conjecture was formulated in the general context of Shimura varieties and their special points. A proof of this conjecture under the assumption of the generalized Riemann hypothesis for CM fields has been given by Klingler and Yafaev \cite{Klingler}. For an extensive survey on Shimura varieties and a general statement of the conjecture, the reader is referred to \cite{MoonenOort}.

Although our focus is on genus 3, we start by stating facts that hold for every $g\geq 1$. We denote by $\mathcal{A}_g$ the moduli space of ppavs of dimension $g$ over $\C$ and by $\mathcal{M}_g$ the moduli space of smooth genus $g$ curves defined over $\C$. Recall that the Torelli morphism
\begin{equation}
  j: \mathcal{M}_g\rightarrow \mathcal{A}_g
\end{equation}
associates to every curve its principally polarized Jacobian. We denote by $\mathcal{T}_g$ the closed Torelli locus, i.e., $\mathcal{T}_g=\overline{j(\mathcal{M}_g)}$.

As a complex variety, $\mathcal{A}_g=\text{Sp}_{2g}(\Z)\backslash \mathcal{H}_g$ is a Shimura variety whose special points are exactly the CM points. Recently, Tsimerman \cite{Tsimerman} proved a result showing the existence of a lower bound on the size of the Galois orbits of CM points in $\mathcal{A}_g$. This result, combined with joint work with Pila~\cite{PilaTsimerman}, allowed him to complete a proof of the André-Oort conjecture for $\mathcal{A}_g$ without the generalized Riemann hypothesis assumption.

\begin{theorem}[André--Oort conjecture \cite{Tsimerman}]
  Let $\Gamma$ be a set of CM points in $\mathcal{A}_g$. Then the Zariski closure of $\Gamma$ is a finite union of Shimura subvarieties.
\end{theorem}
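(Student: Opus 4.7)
The plan is to follow the Pila--Zannier strategy, which trades the geometric content of the conjecture for arithmetic information about special points via an o-minimal counting theorem. First I would realize $\mathcal{A}_g = \Sp_{2g}(\Z) \backslash \mathcal{H}_g$ via the standard uniformization $\pi\colon \mathcal{H}_g \to \mathcal{A}_g$, fix a Siegel fundamental domain $F \subset \mathcal{H}_g$ for the action of $\Sp_{2g}(\Z)$, and observe, following Peterzil and Starchenko, that the restriction $\pi|_F$ is definable in the o-minimal structure $\R_{\mathrm{an},\exp}$. The Pila--Wilkie counting theorem then bounds, subpolynomially in the height, the number of rational points of bounded height and degree lying on the transcendental part of $\pi^{-1}(V) \cap F$ for any subvariety $V \subset \mathcal{A}_g$; the Ax--Lindemann--Weierstrass theorem for $\mathcal{A}_g$, proved in this generality by Pila and Tsimerman, identifies maximal algebraic subsets of $\pi^{-1}(V)$ with preimages of weakly special subvarieties.

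Taking $V$ to be the Zariski closure of $\Gamma$ and proceeding by induction on $\dim V$, I would remove the Shimura subvarieties already contained in $V$ and aim to prove that the remaining CM points of $\Gamma$ form a finite set. Each CM point $x \in \mathcal{A}_g$ associated to a CM field of discriminant $D$ admits a preimage in $F$ whose Pila--Wilkie height is bounded polynomially in $D$, so Pila--Wilkie yields a subpolynomial (in $D$) upper bound on the number of such preimages landing on the transcendental part of $\pi^{-1}(V) \cap F$.

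The matching arithmetic lower bound required to close the loop is the step that I expect to be the main obstacle: one needs a polynomial lower bound $|\Gal(\Qbar/E)\cdot x| \gg D^{\epsilon}$ on the Galois orbit of $x$ over its reflex field $E$, with $\epsilon > 0$ depending only on $g$. This is the ingredient supplied by Tsimerman, who derived it by invoking the (now unconditional) averaged Colmez conjecture, established by Andreatta--Goren--Howard--Madapusi Pera and by Yuan--Zhang, to bound the Faltings height of a CM ppav in terms of logarithmic derivatives of Artin $L$-functions at $s=0$, and then feeding this height bound into the isogeny estimates of Masser--Wüstholz. Comparing the Galois lower bound with the Pila--Wilkie upper bound forces all but finitely many CM points of $\Gamma$ to lie on \emph{proper} weakly special subvarieties of $V$. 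Induction on dimension then concludes that $V$ equals a finite union of Shimura subvarieties, as required.
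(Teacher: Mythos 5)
The paper does not prove this statement at all: it is quoted verbatim from the literature with a citation to Tsimerman, and is used as a black box in Section 6. So there is no internal proof to compare against. That said, your outline is an accurate summary of how the theorem is actually proved: the Pila--Zannier strategy with Peterzil--Starchenko definability of $\pi|_F$ in $\R_{\mathrm{an},\exp}$, the Pila--Wilkie counting theorem, the Ax--Lindemann theorem for $\mathcal{A}_g$ due to Pila--Tsimerman, and --- the decisive input --- Tsimerman's polynomial lower bound on Galois orbits of CM points, obtained from the averaged Colmez conjecture (Andreatta--Goren--Howard--Madapusi Pera; Yuan--Zhang) together with Masser--W\"ustholz isogeny estimates. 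Two small points a careful write-up would need to address: first, Ax--Lindemann produces \emph{weakly} special subvarieties, and one must observe that a weakly special subvariety containing a CM point is special; second, the inductive endgame requires knowing that the union of all positive-dimensional special subvarieties contained in $V$ is itself a \emph{finite} union of special subvarieties (a result of Ullmo, or an additional induction on the ambient Shimura datum), rather than following formally from induction on $\dim V$ alone. With those caveats, your sketch faithfully reflects the proof; it is of course a roadmap through several deep external theorems rather than a self-contained argument, which is exactly the status the result has in this paper as well.
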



Among the Shimura subvarieties of $\mathcal{A}_g$, a well known example is that of the Hilbert modular variety, whose points are polarized abelian varieties whose endomorphism ring contains the ring of integers of a totally real field of genus $g$. Hilbert modular varieties play an important role when studying the number of CM points in $\mathcal{T}_g$.

Indeed, let us turn our attention to the case of CM fields with Galois group isomorphic to $C_2^g\rtimes S_g$. Chai and Oort call these fields and their corresponding CM points sufficiently general (see \cite[(2.13)]{Chai} for a justification of this definition). We will use the following result given in \cite{Chai}.

\begin{lemma}\label{Chai}
  Let $Y$ be an irreducible Shimura subvariety of $\mathcal{A}_g$ of positive dimension. Assume that $Y\neq \mathcal{A}_g$ and that $Y$ contains a sufficiently general CM point $y$ in $\mathcal{A}_g$. Then $Y$ is a Hilbert modular variety attached to the totally real subfield of degree $g$ over $\Q$ contained in the CM field attached to $y$.
\end{lemma}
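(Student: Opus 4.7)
The plan is to translate the geometric statement into the language of Mumford--Tate groups and then exploit the strong Galois symmetry imposed by the sufficiently general hypothesis. Every irreducible Shimura subvariety $Y$ of $\mathcal{A}_g$ arises from a connected reductive $\Q$-subgroup $H \subset \mathrm{GSp}_{2g}$ equipped with a Shimura datum, with $H$ being the generic Mumford--Tate group on $Y$. For any $y \in Y$ one has $\mathrm{MT}(y) \subset H$, and when $y$ is a CM point attached to a CM field $K$ of degree $2g$, the group $\mathrm{MT}(y)$ is a torus $T \subset \mathrm{GSp}_{2g}$ whose cocharacter lattice is the standard permutation $\Z$-module on which $\Gal(\widetilde K/\Q)$ acts via its defining representation.

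Next I would invoke the hypothesis that $y$ is sufficiently general, which by \cite[(2.13)]{Chai} means the Galois closure $\widetilde K/\Q$ has group $C_2^g \rtimes S_g$, with $S_g$ permuting the $g$ embeddings of the totally real subfield $K_0$ transitively and $C_2^g$ acting on each embedding by complex conjugation. Since $H$ is defined over $\Q$ and contains $T$, the set of roots of $H$ with respect to $T_{\overline{\Q}}$ inside the type $C_g$ root system of $\mathrm{GSp}_{2g}$ must be stable under this $C_2^g \rtimes S_g$-action. The key classification is therefore purely root-theoretic: the Galois-stable closed subsystems of the type $C_g$ root system equipped with its natural $C_2^g \rtimes S_g$-symmetry are only the empty subsystem (giving $H = T$ and hence $\dim Y = 0$), the orbit of short roots, which generates $H = (\mathrm{Res}_{K_0/\Q}\mathrm{GL}_2) \cap \mathrm{GSp}_{2g}$, and the full root system (giving $H = \mathrm{GSp}_{2g}$). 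Since $\dim Y > 0$ and $Y \neq \mathcal{A}_g$, only the intermediate possibility remains, and the associated Shimura variety is precisely the Hilbert modular variety attached to $K_0$.

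The main obstacle is the root-theoretic classification underlying the third step. Concretely, one must verify that the transitive $S_g$-action on the embeddings of $K_0$ forbids any ``partial Hilbert'' configuration involving only a proper non-empty subset of these embeddings, and that no Galois-stable subsystem lies strictly between the short-root orbit and the full $C_g$-system. Both facts ultimately reduce to the explicit description of the type $C_g$ root system together with its $C_2^g \rtimes S_g$-action; they are carried out in detail in \cite[\S 2]{Chai}. Alternatively, one may appeal to the general classification of sub-Shimura data of Hodge type containing a prescribed CM torus with maximal reflex Galois group, which again isolates the Hilbert datum as the unique non-trivial proper possibility.
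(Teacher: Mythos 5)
The paper does not actually prove this statement: it is quoted verbatim as a result of Chai--Oort and used as a black box, so there is no internal proof to compare against. Your sketch is, in substance, a reconstruction of the Chai--Oort argument itself: pass to the generic Mumford--Tate group $H$ of $Y$, note that it contains the CM torus $T=\mathrm{MT}(y)$, which for a sufficiently general CM point is a maximal torus of $\GSp_{2g}$ whose splitting field has Galois group $C_2^g\rtimes S_g$ acting as the full Weyl group of the type $C_g$ root datum, and then classify the Galois-stable (equivalently, Weyl-stable) closed symmetric subsets of roots. Since such a subset must be a union of Weyl orbits and there are only two orbits (long and short roots), the trichotomy $H=T$, $H=$ Hilbert group, $H=\GSp_{2g}$ follows, and the hypotheses $\dim Y>0$, $Y\neq\mathcal{A}_g$ isolate the middle case. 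This is the right strategy and the right level of detail for the transitivity point (no ``partial Hilbert'' configuration survives the $S_g$-action).

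One concrete correction: you have the two Weyl orbits the wrong way around. In type $C_g$ the subgroup $(\Res_{K_0/\Q}\GL_2)\cap\GSp_{2g}$ has root system $\{\pm 2e_i\}$, the \emph{long} roots, which form the closed subsystem of type $A_1^g$; the short roots $\{\pm e_i\pm e_j\}$ do \emph{not} form a closed subset at all, since $(e_i-e_j)+(e_i+e_j)=2e_i$ is a root lying outside that orbit, so they cannot be the root system of any reductive subgroup containing $T$. The classification you need is therefore: empty set, long roots, or everything --- which still yields exactly the stated trichotomy, so the error is local and easily repaired, but as written the key step asserts something false about the $C_g$ root system. With that fix, and granting the standard facts you invoke about special subvarieties being cut out by their generic Mumford--Tate groups and about $\mathrm{MT}(y)$ being the full torus $\{x\in\Res_{K/\Q}\mathbb{G}_m : x\bar{x}\in\mathbb{G}_m\}$ in the sufficiently general case, the argument is sound and matches the source the paper cites.
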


This lemma allowed the authors of \cite{Chai} to establish the following result for genus $g>3$.
\begin{theorem}
  Assume the André--Oort conjecture to be true. Then for every $g>3$ the number of sufficiently general CM points in $\mathcal{T}_g$ is finite.
\end{theorem}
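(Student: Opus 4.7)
The plan is to argue by contradiction, using the André--Oort conjecture together with Lemma~\ref{Chai} to force the existence of a Hilbert modular subvariety inside the Torelli locus, and then to invoke the known fact that no such containment exists when $g>3$.

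Suppose, for contradiction, that there exist infinitely many sufficiently general CM points $\Gamma \subset \mathcal{T}_g$. By the André--Oort conjecture, the Zariski closure $\overline{\Gamma}$ in $\mathcal{A}_g$ is a finite union of Shimura subvarieties $\overline{\Gamma} = \bigcup_{i=1}^{n} Y_i$. Since $\Gamma$ is infinite, at least one irreducible component, say $Y = Y_{i_0}$, must be of positive dimension and must contain infinitely many sufficiently general CM points; in particular, $Y$ contains at least one such point $y$. Moreover, because $Y \subset \overline{\Gamma} \subset \mathcal{T}_g$, and because $\dim \mathcal{T}_g = 3g-3 < g(g+1)/2 = \dim \mathcal{A}_g$ for $g>3$, we have $Y \neq \mathcal{A}_g$.

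These are exactly the hypotheses of Lemma~\ref{Chai}, so $Y$ must be the Hilbert modular variety $\mathcal{H}_{F}$ attached to the totally real subfield $F$ of degree $g$ inside the sextic-style CM field corresponding to $y$. Thus we obtain the inclusion $\mathcal{H}_{F} \subset \mathcal{T}_g$ of a positive-dimensional (in fact $g$-dimensional) Hilbert modular variety inside the closed Torelli locus.

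The main obstacle, and the step that carries the real content beyond the cited results, is to exclude this containment: one needs that for $g>3$, no positive-dimensional Hilbert modular subvariety of $\mathcal{A}_g$ is contained in $\mathcal{T}_g$. This is a theorem in the spirit of the Coleman--Oort conjecture and has been established (for Hilbert modular varieties) by the combined work of Bakker--Tsimerman and earlier partial results of de~Jong--Zhang, Hain, and others; cited in the form needed here in \cite{Chai}. Granting this non-containment, we obtain the desired contradiction, so the set of sufficiently general CM points in $\mathcal{T}_g$ must be finite. A clean write-up would therefore consist of these three short steps, with the bulk of the exposition devoted to locating and citing the non-containment result precisely, since everything else is an immediate application of André--Oort and Lemma~\ref{Chai}.
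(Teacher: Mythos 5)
Your argument is correct and is essentially the proof of Chai--Oort; note that the paper itself gives no proof of this statement but simply quotes it from \cite{Chai}, so there is nothing internal to compare against beyond Lemma~\ref{Chai}. Your three steps (André--Oort produces a positive-dimensional special subvariety $Y\subset\mathcal{T}_g$ through a sufficiently general CM point; $Y\neq\mathcal{A}_g$ since $3g-3<g(g+1)/2$ for $g>3$; Lemma~\ref{Chai} forces $Y$ to be a Hilbert modular variety, which cannot lie in $\mathcal{T}_g$) are exactly the intended ones. The only weak spot is the attribution of the final non-containment: the theorem you need --- that for $g\geq 4$ no Hilbert modular variety attached to a totally real field of degree $g$ is contained in the closed Torelli locus --- is due to de~Jong and Zhang (``Generic abelian varieties with real multiplication are not Jacobians''), not to Bakker--Tsimerman; citing it precisely would close the one genuine dependency your write-up leaves open, but this does not affect the validity of the argument.
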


When $g=3$, the closed Torelli locus $\mathcal{T}_3$ coincides with $\mathcal{A}_3$, but we believe that a similar argument can be adapted to genus 3, as soon as we restrict to the hyperelliptic locus. Indeed, let us denote by $\mathcal{M}_3^{\hyp}$ the image of the subspace of hyperelliptic curves inside the Torelli locus. Then $\mathcal{M}_3^{\hyp}$ contains infinitely many hyperelliptic curves with CM, since all genus 3 curves with CM by a field containing $\Q(i)$ are hyperelliptic. This is certainly in accordance with the André--Oort conjecture, since the Shimura surface parametrizing points whose endomorphism ring contains $\sqrt{-1}$ is contained in $\mathcal{M}_3^{\hyp}$.

Assume now that $\mathcal{M}_3^{\hyp}$ contains infinitely many sufficiently general CM points. Then by the André--Oort conjecture and Lemma \ref{Chai}, it contains a Hilbert modular variety attached to a totally real field of degree 3. Recall that among the exceptional hyperelliptic fields listed in Table \ref{tab:mixed}, 14 are mixed, i.e., they allow both a hyperelliptic and non-hyperelliptic curve. This quickly disproves the fact that the Hilbert modular variety corresponding to the real multiplication subfield of each of these fields could be contained in the hyperelliptic locus. For the remaining 3 exceptional hyperelliptic fields listed in the Table, we cannot reach a similar conclusion for the corresponding real multiplication subfields and their Hilbert modular varieties. One way to tackle the question experimentally would be to adapt our implementation to compute points with CM by non-maximal orders, which contain the maximal real multiplication order in these fields. Once the period matrices of these points are determined, it would suffice to use Algorithm \ref{alg:algo4} to check heuristically that some of the corresponding curves are non-hyperelliptic.

As stated in the introduction, we do not have enough evidence to support the claim that the list of exceptional hyperelliptic CM fields mentioned in Main Result 1 and 2 is complete and we certainly do not claim that. However, the considerations above support the conjecture that the full list of exceptional hyperelliptic CM fields should be finite.

\subsection{Cryptographic implications}\label{sec:crypto}

Let us now turn our attention to applications in cryptography. The Discrete Logarithm Problem (DLP) in Jacobians of hyperelliptic curves defined over a finite field $\F_q$ (with $q=p^d$ and $p$ a prime) can be solved in $\tilde{O}(q^{4/3})$, using the index calculus algorithm of Gaudry, Thériault and Diem \cite{Gaudry}. In contrast, Jacobians of non-hyperelliptic curves of genus 3 are amenable to Diem’s index calculus algorithm, which requires only $\tilde{O}(q)$ group operations to solve the DLP \cite{Diem}. As a consequence, an efficient way of attacking DLP on a genus 3 hyperelliptic Jacobian is by reducing it to a DLP on a non-hyperelliptic Jacobian via an explicit isogeny. Assuming that the kernel of the isogeny will intersect trivially with the subgroup of cryptographic interest, we derive a $\tilde{O}(q)$ time attack on the hyperelliptic Jacobian (see \cite{Smith}). So an interesting question is how to find such isogenies.\\

\noindent
\textit{Idea of the attack.} To tackle this question, let us consider $A$ an ordinary ppav defined over $\F_q$ isomorphic to a hyperelliptic Jacobian. The theory of canonical lifts of Serre and Tate allows us to lift $A$ to an ordinary ppav $\tilde{A}$ defined over $W(\F_q)$, the ring of Witt vectors of $\F_q$, such that $\End (A) \simeq \End (\tilde{A})$ and $A\rightarrow \tilde{A}$ is functorial (see \cite{Bowdoin}). After fixing an embedding $W(\bar{\F}_q)\hookrightarrow \C$, we may assume that $\tilde{A}$ is a ppav defined over $\C$ with CM by the maximal ring of integers of $K$ and CM type $\Phi$. As suggested by our Main Results 1 and 2, hyperelliptic Jacobians with CM are rare, hence most of the times we expect $\tilde{A}$ to be a non-hyperelliptic Jacobian with hyperelliptic reduction mod $p$. We now consider the following graph: the vertices are absolutely simple 3-dimensional ppav defined over $\C$ with CM by the maximal order of $K$ and the edges are isogenies between ppavs. In the literature, this is known as the \textit{horizontal isogeny graph} (see for instance  \cite{Jetchev}). Moreover, by \cite[Ch. III, Sec. 11, Prop. 13]{Shimura}, the isogenies in this graph will reduce to isogenies defined over $\F_q$ of equal degree.

In this graph, our goal is to find an isogeny from $\tilde{A}$ to another ppav, which has good quartic reduction at $p$. The problem is not trivial, since the number of vertices in this graph is $O(\#\Cc_K)$, hence it grows exponentially with the size of the class group of $K$. If we construct an isogeny to a ppav on the Galois orbit of $\tilde{A}$ as in Theorem \ref{thm:srecip}, then the target variety will also have hyperelliptic reduction at $p$.

Consequently, we will choose an isogeny $\tilde{I}$ corresponding to a non-trivial element in $\Cc_K/\im (\Nc_{\Phi})$ (preferably one which allows an ideal representative of smallest possible norm). We denote by $\tilde{B}$ the target ppav obtained in this way and by $B$ its reduction modulo $p$. Heuristically, both $\tilde{B}$ and $B$ are isomorphic to non-hyperelliptic Jacobians. To support this heuristic, we computed all primes of hyperelliptic reduction for all non-hyperelliptic orbits for a given CM field.

\begin{example}
  As an example, we revisit the case of the CM field of equation $x^6 - 2x^5 + x^4 - 4x^3 + 5 x^2 - 50 x + 125$, which is the sixth entry in Table \ref{tab:mixed}. Recall that for this field there is one hyperelliptic orbit of length 4 and three non-hyperelliptic orbits under conjugation by $\Gal(\bar{\Q}/\Q)$. The Dixmier-Ohno invariants of plane quartics with CM by this field are defined over a degree 4 extension field of $\Q$ of equation $x^4-17x^3-24x^2+7$. We computed invariants for one curve on each of the non-hyperelliptic orbits (see \cite{dis-github} for the numerical values). With these in hand, we computed the primes of hyperelliptic reduction for these CM points, using the criterion in \cite[Theorem 1.10]{LLLR}. We list the results in Table \ref{tab:hyperellred}, where as before the subscripts denote the norms of the ideals. We can see that the lists of primes of hyperelliptic reduction for different orbits are almost disjoint (only $\pp_{29}$ appears in two of these lists).
\end{example}

\begin{table}[h]
  \begin{tabular}{|c|c|}
    \hline
    Orbit & Prime ideals of hyperelliptic reduction\\
    \hline
    $1$ & $ \pp_{29}, \pp_{151},\p_{331},\pp_{15937}, \pp_{2986259}$\\
    \hline
    $2$ & $\pp_{29},\pp_{53}, \pp_{409}, \pp_{2251},\pp_{27509}, \pp_{37423}, \pp_{154757110537}$\\
    \hline
    $3$ & $\pp_{71}, \pp_{827},\pp_{2207}, \pp_{3181},\pp_{6133}$ \\
    \hline
  \end{tabular}
  \caption{Hyperelliptic reduction for non-hyperelliptic curves with CM by the field with defining polynomial $x^6 - 2x^5 + x^4 - 4x^3 + 5 x^2 - 50 x + 125$}
  \label{tab:hyperellred}
\end{table}

\bibliographystyle{abbrv}
\bibliography{references}

\end{document}